\newtheorem{theorem}{Theorem}[section]
\newtheorem{definition}[theorem]{Definition}
\newcommand{\ssize}{\text{size}\,}
\newcommand{\eenergy}{\text{energy}\,}
\newtheorem{lemma}[theorem]{Lemma}
\newtheorem{proposition}[theorem]{Proposition}
\newcommand{\sssize}{\widetilde{\text{size}}}
\newcommand{\one}{\mathbf{1}}
\newcommand{\dist}{\,\text{dist}\,}
\newcommand{\rr}{\mathbb}
\newcommand{\ii}{\mathscr}
\newcommand{\ci}{\tilde{\chi}}
\newcommand{\ds}{\displaystyle}
\newcommand{\cc}{\mathcal{C}}
\newcommand{\lft}{\left|}
\newcommand{\rg}{\right|}
\newtheoremstyle{dotless}{}{}{\itshape}{}{\bfseries}{}{ }{}
\theoremstyle{dotless}
\newtheorem*{remark}{Remark:}
\author{Cristina Benea}
\address{Cristina Benea, CNRS - Universit\'{e} de Nantes, Laboratoire Jean Leray, Nantes 44322, France}
\email{cristina.benea@univ-nantes.fr}
\author{Fr\'{e}d\'{e}ric Bernicot}
\address{Fr\'{e}d\'{e}ric Bernicot, CNRS - Universit\'{e} de Nantes, Laboratoire Jean Leray, Nantes 44322, France}
\email{frederic.bernicot@univ-nantes.fr}
\date{\today}
\keywords{Bilinear Fourier multipliers, orthogonality}
\subjclass{ 42B20, 42B25}
 \title{A Bilinear Rubio de Francia Inequality for Arbitrary Squares
} 
\begin{document}
\begin{abstract}
We prove the boundedness of a smooth bilinear Rubio de Francia operator associated with an arbitrary collection of squares (with sides parallel to the axes) in the frequency plane
\[
\left(f, g\right)\mapsto \left( \sum_{\omega \in \Omega}\lft\int_{\rr{R}^2} \hat{f}(\xi) \hat{g}(\eta) \Phi_{\omega}(\xi, \eta) e^{2 \pi i x\left(\xi+\eta \right)} d \xi d \eta\rg^r \right)^{1/r},
\] 
provided $r>2$. More exactly, we show that the above operator maps $L^p \times L^q \to L^s$ whenever $p, q, s'$ are in the ``local $L^{r'}$" range, i.e.  $\ds \frac{1}{p}+\frac{1}{q}+\frac{1}{s'}=1$, $\ds 0 \leq \frac{1}{p}, \frac{1}{q} <\frac{1}{r'}$, and $\ds\frac{1}{s'}<\frac{1}{r'}$. Note that we allow for negative values of $s'$, which correspond to quasi-Banach spaces $L^s$.
\end{abstract}

\maketitle

\section{Introduction}
\label{sec:intro}

The classical Littlewood-Paley theory states that the $L^p$ norm of a function is equivalent to the $L^p$ norm of the square function associated with (smooth) Fourier projection onto the dyadic intervals $\ds \left[ 2^j, 2^{j+1} \right]$:
\begin{equation}
\label{eq:littlewood-paley}
 \| f\|_p \leq c_p \left\| \left( \sum_j \lft f \ast \psi_j  \rg^2  \right)^{1/2}  \right\|_p \leq C_p \|f\|_p,
\end{equation}
for any $1<p< \infty$.

For an arbitrary sequence of disjoint intervals, we can only recover the RHS inequality of \eqref{eq:littlewood-paley} for $2 \leq p < \infty$, and consequently only the LHS for $1<p \leq 2$. Rubio de Francia proved in \cite{RF} that for any arbitrary collection of mutually disjoint intervals $\ds  \left[ a_k, b_k\right]$, the operator 
\begin{equation}
\label{def_RF_classic}
RF(f)(x)=\left( \sum_{ k } \lft \int_{\rr{R}} \hat{f}(\xi) \one_{\left[ a_k, b_k\right] }(\xi) e^{2 \pi i \xi x}    d \xi \rg^{2} \right)^{1/2},
\end{equation}
maps $L^p$ into $L^p$ boundedly, for any $p \geq 2$. We can regard this as a Littlewood-Paley inequality associated to \emph{disjoint, arbitrary Fourier projections}.

In higher dimensions, a similar result was proved by Journ\'{e} in \cite{JournCZopRF2}: given an arbitrary collection of mutually disjoint rectangles $\ds \lbrace R \rbrace_{R \in \ii{R}}$ in $\rr{R}^n$ with sides parallel to the axes, the operator
\begin{equation}
\label{def:RF-2d}
F \mapsto \left(  \sum_{R \in \ii{R}} \left| \int_{\rr{R}^n} \hat{F}(\xi_1, \ldots, \xi_n) \one_{R}(\xi_1, \ldots, \xi_n) e^{2 \pi i \left(\xi_1, \ldots , \xi_n\right) \cdot \left( x_1, \ldots , x_n \right)}    d \xi_1 \ldots d \xi_n\right|^{2} \right)^{1/2}
\end{equation}
maps $\ds L^p\left(\rr{R}^n\right)  \to L^p\left(\rr{R}^n\right)$ boundedly, for any $p \geq 2$.

A similar generic orthogonality principle for bilinear operators doesn't exist, except for some particular situations. More exactly, consider a family of bilinear operators $T_k$ associated with multipliers $m_k$:
\[
T_k(f, g)(x)=\int_{\rr{R}^2} \hat{f}(\xi) \hat{g}(\eta) m_k(\xi, \eta) e^{2 \pi i x\left(\xi+\eta\right)} d\xi d \eta.
\]
Assume that the $m_k$ have mutually disjoint supports in the frequency plane, and the operators $T_k$ are uniformly bounded within some range. What extra conditions should the $m_k$ satisfy in order to obtain
\begin{equation}
\label{eq:bil-princip}
\Big \|\left( \sum_k \big \vert T_k (f,g)\big \vert^2 \right)^{1/2}   \Big \|_s \lesssim \|f\|_p \|g\|_q,
\end{equation}
for some triple $(p, q, s)$ satisfying $\ds \frac{1}{p}+\frac{1}{q}=\frac{1}{s}$?
\medskip
~\\

Below we present a few examples from the existing literature of such square functions associated to bilinear operators $T_k$.

The natural bilinear version of \eqref{def:RF-2d} is the following operator:
\begin{equation}
\label{def:T_sharp}
T_{sharp}(f, g)(x):=\left( \sum_{\omega \in \Omega} \lft  \int_{\rr{R}^2} \hat{f}(\xi) \hat{g}(\eta) \one_{\omega}(\xi, \eta) e^{2 \pi i x \left( \xi+ \eta \right)} d \xi d \eta  \rg^2   \right)^{1/2},
\end{equation}
where $\Omega$ is an arbitrary collection of mutually disjoint squares, with sides parallel to the axes. We restrict our attention to squares in order to make sure that $T_{sharp}$ defined above is a one-parameter operator. It is not known if this operator is bounded, and unfortunately we do not yet have a way to address this question.

A first example of a ``bilinear Littlewood-Paley square function" was introduced by Lacey in \cite{bilinear-LP-Lacey}: if $\ds \hat{\Phi}$ is a smooth bump function supported on the interval $\left[ 0, 1\right]$, then
\begin{equation}
\label{def:Lacey-LP-square-F}
\left( f, g\right) \mapsto \left( \sum_{ k \in \rr{Z}} \lft \int_{\rr{R}^{2}} \hat{f}(\xi) \hat{g}(\eta) \hat{\Phi}( \xi -\eta - k) e^{2 \pi i x \left( \xi +\eta \right)} d \xi d \eta  \rg^2  \right)^{1/2}
\end{equation}
is a bounded operator from $L^p \times L^q$ into $L^2$, whenever $p, q \geq 2$ and $\ds \frac{1}{p}+\frac{1}{q}=\frac{1}{2}$. This work predates \cite{initial_BHT_paper}, where Lacey an Thiele prove the boundedness of the bilinear Hilbert transform, which is defined as:
\begin{equation}
\label{def:BHT}
BHT(f, g)(x)= \int_{\rr{R}^{2}} \hat{f}(\xi) \hat{g}(\eta) sgn (\xi -\eta) e^{2 \pi i x \left( \xi +\eta \right)} d \xi d \eta.
\end{equation}
The multiplier of the $BHT$ operator is singular along the line $\xi=\eta$, and for this reason its analysis is quite complicated. 

For the operator in \eqref{def:Lacey-LP-square-F}, the multipliers are given by $m_k(\xi, \eta):=\hat{\Phi}( \xi -\eta - k)$, are smooth, and are disjoint translations of the same multiplier $\hat{\Phi}( \xi -\eta)$. Later on, it was showed in \cite{smoothEDbilineairRF} and \cite{bilSqF-smooth-Fr} that this operator is bounded from $L^p \times L^q$ to $L^s$, for any $p, q >2$. The proof outside the local $L^2$ range (that is, for $s >2$) relies on the boundedness of the maximal truncation for the bilinear Hilbert transform, which is a rather deep result.

A non-smooth bilinear Littlewood-Paley square function for disjoint, arbitrary intervals was introduced in \cite{bilinLP}:
\begin{equation}
\label{def-LP-Frederic}
LP(f,g)(x)=\left( \sum_{k} \lft \int_{\rr{R}^{2}} \hat{f}(\xi) \hat{g}(\eta) \one_{\left[a_k, b_k \right]}( \xi -\eta ) e^{2 \pi i x \left( \xi +\eta \right)} d \xi d \eta  \rg^2  \right)^{1/2}.
\end{equation}
Here the family of multipliers is given by $m_k(\xi, \eta)=\one_{\left[a_k, b_k\right]}(\xi -\eta)$. So far, the only boundedness results that are known (inside the local $L^2$ range) correspond to intervals $\ds \lbrace \left[ a_k, b_k \right] \rbrace_k$ of equal lengths, and the proof hints at vector valued quantities for $BHT$. Even if the sharp cutoffs $\one_{\left[a_k, b_k \right]}$ are replaced by smooth functions $\hat{\Phi}_k$ which are adapted to the intervals $\left[a_k, b_k\right]$, the validity of a general result for arbitrary intervals is still unclear.

A sufficient condition for the boundedness of the square function $\left( \sum_k \vert T_k \vert^2 \right)^{1/2}$ inside the local $L^2$  range is a ``splitting " property of the operators $T_k$, in the sense that
\begin{equation}
\label{eq:mult-split}
T_k(f, g)=T_k\left( f \ast \check{\one}_{A_k}, g \ast \check{\one}_{B_k}  \right),
\end{equation}
where $\ds \lbrace A_k \rbrace_k$  and $\ds \lbrace B_k \rbrace_k$ are both collections of mutually disjoint intervals. This idea appears in \cite{bilinear_disc_multiplier}, \cite{DiestelGraf-maxBilOpDIlations} and \cite{DiestelRemarksBilLP}. In \cite{bilinear_disc_multiplier}, the authors are in fact expressing the bilinear disc multiplier as a sum of operators $T_k$, each of which satisfies \eqref{eq:mult-split}. In order to deduce the boundedness of $\sum_k T_k$ from the boundedness of the square function, one needs an extra orthogonality assumption: $\ds \langle T_k(f, g), h \rangle=\langle T_k \left( f \ast \check{\one}_{A_k},  g \ast \check{\one}_{B_k} \right),  h \ast \check{\one}_{C_k} \rangle$, where $\ds \lbrace A_k \rbrace_k, \lbrace B_k \rbrace_k$  and $\ds \lbrace C_k \rbrace_k$ are collections of mutually disjoint intervals. 

The operators in \eqref{def-LP-Frederic} and in \eqref{def:T_sharp} do not have such a splitting property and hence their analysis is much more complicated. Moreover, in both cases, the multipliers $m_k$ have infinite supports.

On the other hand, there are examples in \cite{vv_BHT} of operators satisfying $$T_k(f, g)=T\left( f \ast \check{\one}_{A_k}, g \ast \check{\one}_{B_k}  \right),$$ and hence \eqref{eq:mult-split}, and for which one can prove
\begin{equation}
\Big \|\left( \sum_k \lft T_k (f,g)\rg^r \right)^{1/r}   \Big \|_s \lesssim \|f\|_p \|g\|_q,
\end{equation}
for any $1 \leq r <\infty$, within a range larger that the local $L^2$ range. The operator $T$ can be for instance a paraproduct or the bilinear Hilbert transform. The proof relies on vector valued extensions for the operator $T$, and on a generalized version $RF_r$ of Rubio de Francia's square function.

We recall that the boundedness of $RF$, together with the Carleson-Hunt theorem (from \cite{initial_Carleson}, \cite{carleson-hunt}) imply through interpolation the boundedness of the operator
\begin{equation}
\label{def_RF_classic_r}
RF_r(f)(x)=\left( \sum_{ k } \lft \int \hat{f}(\xi) \one_{I_k}(\xi) e^{2 \pi i \xi x}    d \xi \rg^{r} \right)^{1/r}.
\end{equation}

\begin{theorem}[Rubio de Francia, \cite{RF}]
\label{thm_RF}
For any family of disjoint intervals, and any $r \geq 2$, $RF_r$ is a bounded operator from $L^p$ into $L^p$ whenever $p >r'$:
\begin{equation*}
\|RF_r(f)\|_p \lesssim \| f \|_p.
\end{equation*}
If $r=2$, $RF:L^p \to L^p$ for any $p \geq 2$.
\end{theorem}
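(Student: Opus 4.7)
The plan is to realize $RF_r$ as the $\ell^r$-norm of the sequence of Fourier projections $T f = (S_{I_k} f)_k$, where $S_{I_k} f := \int \hat f(\xi) \one_{I_k}(\xi) e^{2\pi i x \xi} d\xi$, and to interpolate between two endpoint estimates in the mixed-norm spaces $L^p(\ell^r)$.

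The first endpoint is the $r=2$ case of the theorem, i.e.\ the classical Rubio de Francia inequality \cite{RF} itself, which says that $T$ is bounded from $L^p$ to $L^p(\ell^2)$ for every $p \geq 2$. The second endpoint comes from the Carleson--Hunt theorem \cite{initial_Carleson, carleson-hunt}: the maximal partial Fourier integral $\sup_\xi \bigl|\int_{-\infty}^{\xi} \hat f(\eta) e^{2\pi i x\eta}\, d\eta\bigr|$ is bounded on $L^p$ for every $1 < p < \infty$. Since each $S_{I_k}f$ with $I_k=(a_k,b_k]$ is the difference of two such partial sums, one immediately gets $\sup_k |S_{I_k}f|(x) \lesssim Cf(x)$, hence $T: L^p \to L^p(\ell^\infty)$ for every $1 < p < \infty$.

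Second, I would invoke complex interpolation for the mixed-norm Banach couple $\bigl(L^{p_0}(\ell^2), L^{p_1}(\ell^\infty)\bigr)$, using the standard identity $[L^{p_0}(\ell^{r_0}), L^{p_1}(\ell^{r_1})]_\theta = L^{p_\theta}(\ell^{r_\theta})$ with $\frac{1}{p_\theta} = \frac{1-\theta}{p_0}+\frac{\theta}{p_1}$ and $\frac{1}{r_\theta} = \frac{1-\theta}{r_0}+\frac{\theta}{r_1}$. Taking $r_0 = 2$ and $r_1 = \infty$ forces $\theta = 1 - 2/r$, so that
\[
\frac{1}{p_\theta} \;=\; \frac{2/r}{p_0}\;+\;\frac{(r-2)/r}{p_1}.
\]
Letting $p_0$ range over $[2,\infty)$ and $p_1$ over $(1,\infty)$, a short computation shows that $1/p_\theta$ sweeps out exactly the open interval $(0, 1/r')$, which is the range $p > r'$ claimed in the statement.

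The only real content in the argument is the Carleson--Hunt theorem; once that is granted, the rest is a routine application of interpolation and there is no serious obstacle. The only minor subtlety is checking that complex interpolation can be applied to the vector-valued mixed-norm couple above and that the endpoint $p_1$ can be taken arbitrarily close to $1$ so as to reach every $p>r'$; both points are classical.
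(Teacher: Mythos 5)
Your proof is correct and is exactly the argument the paper alludes to: immediately before stating the theorem, the authors remark that the boundedness of $RF$ (the $\ell^2$ endpoint) together with the Carleson--Hunt theorem (yielding the $\ell^\infty$ endpoint via differences of partial Fourier integrals) imply $RF_r$'s boundedness ``through interpolation.'' Your write-up simply fills in the details of that complex interpolation in the mixed-norm spaces $L^p(\ell^r)$, including the verification that $1/p_\theta$ sweeps out $(0,1/r')$, which is precisely what the cited references are being invoked to do.
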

The result is false for $r<2$, or for $p$ outside the range mentioned in Theorem \ref{thm_RF}. A counterexample can be constructed even for intervals of equal length. 

In dimensions $n \geq 2$, the only known result corresponds to $r=2$, and it will be interesting to understand if anything as generic as Theorem \ref{thm_RF} holds in higher dimensions.
~\\

Although we know how to perform a Fourier analysis associated with an arbitrary collection of intervals (or rectangles) in frequency for the linear setting, such a bilinear analogue was not sufficiently examined. Indeed, all the previously studied bilinear operators rely on a specific geometry (a line or a particular collections of lines). In the present paper, we study the following operator:
\begin{equation}
\label{eq:smooth_sqRF_r}
T_r\left( f, g \right)=\left( \sum_{\omega \in \Omega}\lft \int_{\rr{R}^2} \hat{f}(\xi) \hat{g}(\eta) \Phi_{\omega}(\xi, \eta) e^{2 \pi i x \left(\xi +\eta  \right)} d \xi d \eta   \rg^r \right)^{1/r},
\end{equation} 
where $\ds \lbrace  \omega \rbrace_{\omega \in \Omega}$ is an arbitrary collection of disjoint squares with sides parallel to the axes, and $\Phi_{\omega}$ are smooth bump functions adapted to $\omega$. We hope this will lead to a better understanding of the operator $LP$ from \eqref{def-LP-Frederic}, which is associated to an arbitrary collections of frequency strips.

We will prove the following result:
\begin{theorem}
\label{thm:main}
For any $r>2$, the operator $T_r$ maps $L^p \times L^q$ into $L^s$ boundedly, for any $r'< p, q \leq \infty, \frac{r'}{2}<s < r$, and $\ds \frac{1}{p}+\frac{1}{q}=\frac{1}{s}$.That is, 
\begin{equation}
\label{eq:thm:main}
\Big \| \left( \sum_{\omega \in \Omega} \lft \int_{\rr{R}^2} \hat{f}(\xi) \hat{g}(\eta) \Phi_{\omega}(\xi, \eta) e^{2 \pi i x \left(\xi +\eta  \right)} d \xi d \eta   \rg^r \right)^{1/r}  \Big \|_s \lesssim \big\|f\big\|_p \cdot \big\|g\big\|_q.
\end{equation}
\end{theorem}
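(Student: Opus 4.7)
The plan is to dualize the $\ell^r$ square function and study the trilinear form
\begin{equation*}
\Lambda(f, g, \vec h) = \sum_{\omega \in \Omega} \int_{\rr{R}} T_\omega(f, g)(x) \, h_\omega(x) \, dx,
\end{equation*}
where $T_\omega$ denotes the bilinear multiplier with symbol $\Phi_\omega$ and $\vec h = (h_\omega)_{\omega \in \Omega}$ ranges over the unit ball of $L^{s'}(\ell^{r'})$. As a first reduction, since each $\omega$ is an axis-parallel square, I would expand $\Phi_\omega$ as a double Fourier series on a slightly dilated bounding square, writing $\Phi_\omega(\xi, \eta) = \sum_{n, m} c_{n, m}^\omega \, \phi_{\omega}^{n}(\xi) \, \psi_{\omega}^{m}(\eta)$ with coefficients decaying rapidly in $(n, m)$. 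By the triangle inequality in $\ell^r$ this reduces matters to tensor-product symbols, so each $T_\omega(f,g)$ becomes a pointwise product $P_{I_\omega} f \cdot Q_{J_\omega} g$ of smooth one-dimensional Fourier projections onto the edge intervals $I_\omega$ and $J_\omega$.

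Next, I would discretize using wave packets of Lacey-Thiele type: expand $P_{I_\omega}f$ in wave packets of spatial scale $|I_\omega|^{-1}$ and frequency near $I_\omega$; expand $Q_{J_\omega}g$ analogously; and localize $h_\omega$ in frequency near $-(I_\omega + J_\omega)$, the only portion surviving the pairing. The form $\Lambda$ then becomes a triple sum of wave-packet pairings indexed by tile triples $\vec P = (P_1, P_2, P_3)$ sharing a common spatial interval, with frequency data tied to $\omega$. I would organize this sum via a tree-selection algorithm \`{a} la Muscalu-Tao-Thiele: define $\ssize$ and $\eenergy$ functionals for each of the three wave-packet families, where the size for the $\vec h$-family must incorporate the $\ell^{r'}$ summation over $\omega$. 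The goal is then to establish (i) a single-tree estimate bounding the restriction of $\Lambda$ to a tree $T$ by $\ssize_1(T) \cdot \ssize_2(T) \cdot \ssize_3(T) \cdot |I_T|$, and (ii) John-Nirenberg-type energy bounds of the form $\sum_T \ssize_i(T)^{\alpha_i} |I_T| \lesssim \|f\|_p^{\alpha_1}$, with analogues for $g$ and $\vec h$. Summing over selected trees yields the trilinear bound for restricted indicator data $f = \one_F$, $g = \one_G$, and multilinear restricted-weak-type interpolation \`{a} la Muscalu-Tao-Thiele extends this to the full range $\frac{1}{p}+\frac{1}{q}=\frac{1}{s}$ with $r' < p, q \leq \infty$ and $r'/2 < s < r$, including the quasi-Banach regime $s < 1$.

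The main obstacle is that the disjointness of the frequency squares $\omega$ does \emph{not} imply disjointness of the one-dimensional projections $\{I_\omega\}$ or $\{J_\omega\}$: an $N \times N$ grid of unit squares shows that each projection family can have cardinality only $N$ while $|\Omega| = N^2$. Consequently Theorem \ref{thm_RF} cannot be applied separately along either axis, and the $\ell^{r'}$-energy estimate for the $\vec h$-family, which is what ultimately forces $r > 2$, must genuinely exploit the two-dimensional disjointness of the squares rather than reducing to a linear Rubio de Francia estimate along a single axis. I expect this bilinear $BMO^{r'}$-type embedding to be the technical heart of the argument.
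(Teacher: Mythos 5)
Your proposal follows the same overall framework as the paper --- dualize to a trilinear form, expand $\Phi_\omega$ in a double Fourier series to reduce to tensor symbols, discretize into wave packets, run a size/energy (stopping-time) analysis, and finish by restricted weak-type interpolation. You also correctly identify the central obstruction: the two-dimensional disjointness of the $\omega$'s does not project to disjointness along either axis, so no linear Rubio de Francia estimate suffices. However, there are substantive gaps in the middle of the argument that make the proposal incomplete as written.

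\textbf{Trees do not work; columns and rows do.} You propose a tree-selection algorithm \`a la Muscalu--Tao--Thiele, but in MTT a tree exploits a fixed relation between a tile's scale $|I_s|$ and the distance of its side-frequency intervals from the top frequency, a relation that comes from the singular line (or set) in the symbol. Here the squares $\omega \in \Omega$ are arbitrary, so no such relation exists. The paper replaces trees by \emph{columns} and \emph{rows}: a column is a collection with $I_s \subseteq I_t$ and $\omega_{t,1} \subseteq \omega_{s,1}$, and the disjointness of the squares then forces the $\omega_{s,2}$'s to be pairwise disjoint (a row is symmetric). This is the structural innovation that must replace the tree; it is not a cosmetic renaming, because the absence of a scale--frequency relation changes the single-column estimate in an essential way.

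\textbf{Where $r>2$ actually enters.} You attribute $r>2$ to the $\ell^{r'}$ energy bound for $h$, but in fact it enters already in the single-column estimate. Because the tiles in a column have incomparable scales and the wave packets $\phi_{s_3}$ are not spatially localized relative to $I_t$, the H\"older application needs a compensating weight $(|I_s|/|I_t|)^{\alpha r}$ on the $g$-factor and $(|I_s|/|I_t|)^{-\alpha r'}$ on the $h$-factor, with $\alpha = 1/2 - 1/r$; positivity of $\alpha$ is $r>2$. Consequently the column estimate is not of the naive shape $\ssize_1 \cdot \ssize_2 \cdot \ssize_3 \cdot |I_T|$ you propose: it carries a mixed factor $\ssize_2(\cc)^{(r-2)/r}\left(\frac{1}{|I_t|}\sum_{s\in\cc}|\langle g,\phi_{s_2}\rangle|^2\right)^{1/r}$ coming from the disjointness of the $\omega_{s,2}$. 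You would need to discover this form of the estimate; it is not predicted by the MTT template.

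\textbf{Range.} Even after the corrected column/row analysis, the direct size--energy argument yields only $r' < p, q \leq r$. Getting the stated range $r' < p, q \leq \infty$, $r'/2 < s < r$ needs either an interpolation among the three adjoint forms or (as the paper chooses) a localization that converts energies into averages over stopping intervals. Your sentence ``restricted-weak-type interpolation extends this to the full range'' glosses over this extra step, which is nontrivial here because the trilinear form has no symmetry between $f$, $g$, and $h$.

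\textbf{The $h$-energy estimate.} Once the column/row organization is in place, the $\ell^{r'}$ energy bound for $\vec h$ is comparatively elementary: one uses the $L^{r'}$ boundedness of the Hardy--Littlewood maximal operator and the fact that, for a fixed square $\omega$, the spatial tops of the distinct columns (or rows) containing tiles with $\omega_s=\omega$ are pairwise disjoint. So the ``bilinear $BMO^{r'}$'' mechanism you anticipate is in fact replaced by this cleaner disjointness argument, which is precisely where the two-dimensional disjointness of $\Omega$ is exploited.
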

\begin{figure}
    \centering
    \includegraphics[height=5cm]{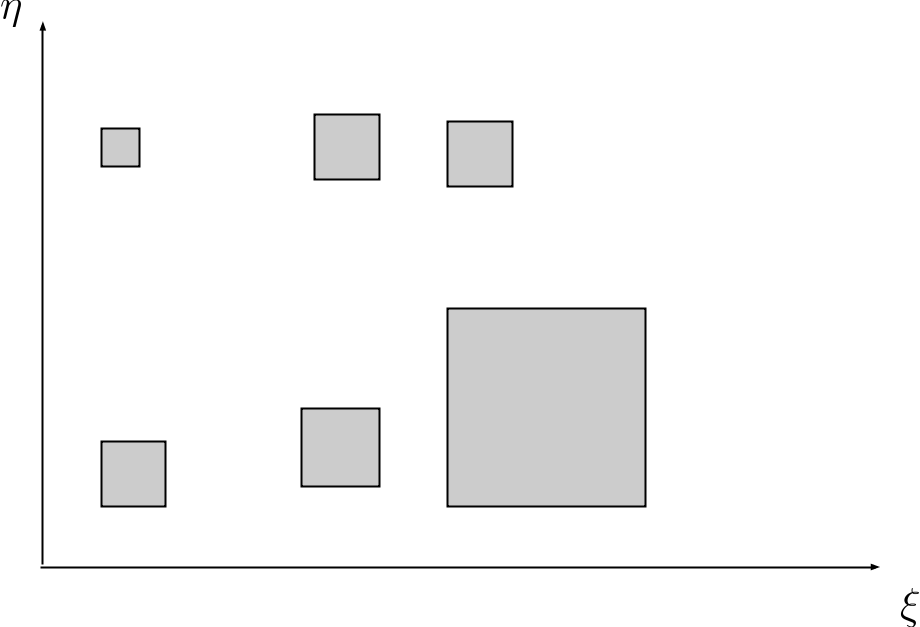}
    \caption{An arbitrary collection of squares}
    \label{fig:Omega}
\end{figure}
The arbitrary geometry on the frequency side, and hence of the time-frequency tiles, differentiates the operator $T_r$ from the classical operators from time-frequency analysis. The prominent examples of bilinear operators are associated to multipliers that are singular at a point (the classical Marcinkiewicz-Mikhlin-H\"{o}rmander multipliers from \cite{CoifMeyer-ondelettes}), along a line (the bilinear Hilbert transform \cite{initial_BHT_paper}), or more generally along curves (\cite{CamilPhDthesis} and \cite{bilinear_disc_multiplier}).

A few observations are in order:
\begin{enumerate}
\item[a)] If the projections of the squares onto the $\xi$ and respectively $\eta$ axes are mutually disjoint, then the boundedness of $T_r$ in the local $L^{r'}$ range is implied by an application of $RF_r$. This is similar to the principle in \eqref{eq:mult-split}.
\item[b)] We note that $s$ can be less than $1$, so the target space $L^s$ can be a quasi-Banach space.
\item[c)] If $r=\infty$, then $T_r : L^p \times L^q \to L^s$ for any $\ds 1< p, q < \infty$, and $\frac{1}{2}<s< \infty$. Here we only use the fast decay of the $\Phi_\omega$.
 As $r \to \infty$, we recover the expected range
\[
1<p, q <\infty, \enskip \frac{1}{2}< s< \infty.
\] 
\item[d)] The condition $r'<p, q$ appearing in Theorem \ref{thm:main} is necessary for the statement to be true in its generality. This becomes evident if one considers a particular configuration of squares of the same size, that are aligned along the strip $\ds 0 \leq \xi \leq 1$, or $\ds 0 \leq \eta \leq 1$.
\item[e)] There are no obvious $L^p$ estimates for the operator $T_r$, not even when $r=2$. This comes in contrast with the linear case , where $L^2$ estimates for $RF$ and its multi-dimensional generalizations from \eqref{def:RF-2d} are immediate.
\item[f)] Theorem \ref{thm:main} admits a multi-dimensional generalization, where $\Omega$ is an arbitrary collection of cubes in $\rr{R}^{2n}$. The proof is identical to the one-dimensional case.
\end{enumerate}

Up to now, it is not clear if $s<r$ is also a necessary condition, but it is an assumption that we need in our proof. Another requirement we cannot avoid is that $2<r$, leaving completely undecided the case of the square function, corresponding to $r=2$. A further question that remains open is whether the smooth cutoffs $\Phi_\omega$ can be replaced by non-smooth cutoffs: is 
\begin{equation}
\label{eq-sqRF_sharp}
T_r^{sharp}\left( f, g \right)(x):= \left( \sum_{\omega \in \Omega} \lft \int_{\rr{R}^2} \hat{f}(\xi) \hat{g}(\eta) \one_{\omega}(\xi, \eta) e^{2 \pi i x \left(\xi +\eta  \right)} d \xi d \eta   \rg^r \right)^{1/r},
\end{equation} 
a bounded operator from $L^p \times L^q$ into $L^s$? The only ``easy" case is $r=\infty$, for which the operator is bounded from $\ds L^p \times L^q \to L^s$ for any $1<p, q< \infty$, and $\dfrac{1}{2}< s< \infty$. In spite of the similarity with the smooth operator $T_\infty$, and in spite of being bounded within the same range, the non-smooth case exhibits additional difficulties: in order to prove the boundedness of $T_\infty^{sharp}$, one needs to invoke the Carleson-Hunt theorem.
~\\

In the proof of Theorem \ref{thm:main}, we will be using Banach-valued restricted weak type interpolation, similar to the presentations in \cite{wave_packet}, \cite{vv_BHT}. The Banach space associated to our operator $T_r$ is $\ell^r$ indexed by the collection $\Omega$ of disjoint squares. Its dual is the space $\ell^{r'}$ indexed also by $\Omega$. Theorem \ref{thm:main} reduces to proving restricted weak type estimates for the trilinear form $\Lambda$ associated to a model operator for $T_r$:
\begin{proposition}
\label{prop:main}
Let $F, G$ and $H$ be measurable subsets of $\rr{R}$, of finite measure, with $\vert H \vert=1$. Then one can construct a major subset $H' \subseteq H$, $\ds \vert H'\vert > \vert H \vert/2$, so that
\begin{equation}
\label{eq:main:dual_form}
\big \vert \Lambda \left(f, g, h \right) \big \vert \lesssim \vert F \vert^{\frac{1}{p}} \vert G \vert^{\frac{1}{q}} \vert H \vert^{\frac{1}{s'}},
\end{equation} 
whenever the functions $f, g, h =\lbrace h_\omega\rbrace_{\omega \in \Omega}$ satisfy 
\begin{equation}
 \label{conditions_F,G,H} \ds \vert f\vert \leq \one_F, \quad \vert g\vert \leq \one_G, \quad  \left( \sum_{\omega} \vert h_\omega  \vert^{r'}   \right)^{1/{r'}} \leq \one_{H'},
\end{equation}
and the exponents $p, q, s$ satisfy $r'< p, q < \infty, \frac{r'}{2}< s < r$, and $\ds \frac{1}{p}+\frac{1}{q}=\frac{1}{s}$.
\end{proposition}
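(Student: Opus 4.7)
The plan is to follow the time-frequency discretization paradigm of Muscalu-Tao-Thiele, with the Banach structure $\ell^r$ (dualized to $\ell^{r'}$) on the third slot dictating both the wave packet decomposition and the notion of size.

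First I would discretize $\Lambda$. For each $\omega = \omega^1 \times \omega^2 \in \Omega$, write $\Phi_\omega$ (up to a rapidly decaying error) as a tensor of wave packets $\hat{\phi}_{\omega^1} \otimes \hat{\phi}_{\omega^2}$, and insert a spatial resolution of identity at scale $\ell(\omega)^{-1}$. This replaces the trilinear form by a discrete model sum
\[
\Lambda(f, g, h) = \sum_{\omega \in \Omega}\, \sum_{P : \omega_P = \omega} \frac{1}{\vert I_P \vert^{1/2}} \langle f, \phi_{P_1}\rangle \langle g, \phi_{P_2}\rangle \langle h_\omega, \phi_{P_3}\rangle,
\]
where $P = (P_1, P_2, P_3)$ is a tri-tile with spatial interval $I_P$ of length $\sim \ell(\omega)^{-1}$, the frequencies $\omega_{P_1}, \omega_{P_2}$ lie inside $\omega^1, \omega^2$, and $\omega_{P_3}$ is centered near $-(c(\omega^1)+c(\omega^2))$. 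A delicate feature absent from the $BHT$ and paraproduct analyses is that different $\omega \in \Omega$ may give overlapping third-component frequencies. The major subset is then defined as $H' = H \setminus \Omega_{exc}$, where
\[
\Omega_{exc} = \left\{ M\one_F > C\vert F\vert\right\} \cup \left\{ M\one_G > C\vert G\vert\right\} \cup \left\{ M\Big(\sum_\omega \vert h_\omega\vert^{r'}\Big)^{1/r'} > C\right\},
\]
with $C$ chosen so that $\vert H'\vert > \vert H\vert/2$; this ensures that any $I_P$ meeting $H'$ satisfies the expected pointwise bounds on the local averages of $\one_F$, $\one_G$, and the $\ell^{r'}$-norm of $h$.

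Next I would introduce the three sizes driving the argument:
\[
\text{size}_j(\mathbf{P}) = \sup_{T \subset \mathbf{P}} \frac{1}{\vert I_T\vert^{1/2}}\Bigl(\sum_{P \in T}\vert \langle u_j, \phi_{P_j}\rangle\vert^2\Bigr)^{1/2}, \quad j = 1, 2,\; u_1 = f,\; u_2 = g,
\]
while the third one is adapted to the Banach structure,
\[
\text{size}_3(\mathbf{P}) = \sup_{T \subset \mathbf{P}} \frac{1}{\vert I_T\vert^{1/r'}}\bigg\Vert \Big(\sum_{\omega}\Big\vert\sum_{\substack{P \in T\\ \omega_P = \omega}} \langle h_\omega, \phi_{P_3}\rangle \ci_{I_P}\Big\vert^{r'}\Big)^{1/r'}\bigg\Vert_{L^{r'}}.
\]
From the construction of $H'$ one gets $\text{size}_1 \lesssim \vert F\vert$, $\text{size}_2 \lesssim \vert G\vert$, $\text{size}_3 \lesssim 1$, while the corresponding global energy estimates use scalar Bessel for $f, g$ and an $\ell^{r'}$-valued Bessel-type bound for $h$, the latter derived from the linear Rubio de Francia Theorem \ref{thm_RF} applied to the one-dimensional frequency intervals $\omega_{P_3}$. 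The single tree estimate
\[
\vert \Lambda_T(f, g, h)\vert \lesssim \vert I_T\vert\, \text{size}_1(T)\, \text{size}_2(T)\, \text{size}_3(T)
\]
is then obtained by Cauchy-Schwarz in the first two coordinates and an $\ell^r/\ell^{r'}$ duality pairing in the third.

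A standard stopping-time decomposition then sorts $\mathbf{P}$ into sub-collections on which the three sizes are $\sim 2^{-n_j}$, the mass $\sum_T \vert I_T\vert$ is controlled by the energies, and summing the tree contributions geometrically produces
\[
\vert \Lambda(f, g, h)\vert \lesssim \vert F\vert^{1/p}\vert G\vert^{1/q}\vert H\vert^{1/s'},
\]
where interpolating the trivial bound against the size bounds forces exactly the range $r' < p, q < \infty$ and $r'/2 < s < r$. The main obstacle is the third size/energy estimate: because the projections $\omega_{P_3}$ for different $\omega \in \Omega$ can overlap arbitrarily, no scalar Bessel inequality applies to $h$, and one must genuinely invoke the linear Rubio de Francia theorem in an $\ell^{r'}$-valued form. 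This is precisely the point where the hypothesis $r > 2$ enters, and it is the reason why the bilinear square-function case $r = 2$ remains out of reach with this method.
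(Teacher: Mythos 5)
Your proposal captures the right general paradigm (wave packet discretization, sizes and energies tuned to the $\ell^{r}/\ell^{r'}$ structure, stopping times), but it has three concrete gaps, and the central one is precisely the handling of the third slot, which is where the paper's novelty lies.

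\emph{The $h$-size/energy cannot rest on Rubio de Francia.} You correctly observe that the third-component intervals $\omega_{s_3}$ for different squares $\omega \in \Omega$ can overlap arbitrarily, and then propose to invoke ``the linear Rubio de Francia theorem in an $\ell^{r'}$-valued form'' for the associated Bessel/energy bound. But Theorem~\ref{thm_RF} requires \emph{disjoint} intervals: the very overlap you noted blocks this route, and there is no $\ell^{r'}$-valued RF statement that would apply. The paper sidesteps the problem entirely: it never uses any frequency-side orthogonality of the $\omega_{s_3}$. Instead, it observes (see \eqref{eq:how_we_got_M}) that $\vert\langle h_\omega,\phi_{s_3}\rangle\vert/\vert I_s\vert^{1/2}\lesssim\inf_{I_s}\mathcal M(h_\omega\,\ci^M_{I_t})$, which only uses fast spatial decay, and then the energy estimate of Proposition~\ref{prop:en_est_h} relies solely on the $L^{r'}$-boundedness of the Hardy--Littlewood maximal function plus the key geometric fact that if a given $\omega$ appears in two of the selected columns (or rows), then those columns have \emph{spatially} disjoint tops, so $\sum_{\mathcal T:\,\omega\in\Omega(\mathcal T)}\one_{I_{\mathcal T}}\le 1$. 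Your sketch has no substitute for this mechanism, and the proof does not close without it.

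\emph{The tree structure and the first two sizes are wrong for this operator.} You propose $L^2$-type sizes $\frac{1}{\vert I_T\vert^{1/2}}\bigl(\sum_{P\in T}\vert\langle u_j,\phi_{P_j}\rangle\vert^2\bigr)^{1/2}$ in both the $f$ and $g$ slots, with an unspecified ``tree'' $T$, and then invoke ``Cauchy--Schwarz in the first two coordinates.'' But in this problem the tiles have no line geometry: the two frequency components of a tri-tile form an arbitrary square. The paper's replacement is the \emph{column/row} structure (Definition of columns/rows in Section~\ref{sec:model_op}): within a column the $\omega_{s_1}$ are nested (so no orthogonality, only a $\sup$ can be extracted for $f$), while the $\omega_{s_2}$ are pairwise disjoint (so an $L^2$ quantity is available for $g$). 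Consequently one of the two linear slots gets only an $L^\infty$-type size, and the column estimate (Proposition~\ref{prop:column est}) uses H\"older with exponents $(\infty,r,r')$ together with the $r>2$ splitting $\|a\|_{\ell^r}^r\le\|a\|_{\ell^\infty}^{r-2}\|a\|_{\ell^2}^2$ to redistribute the $g$-contribution. Defining both $\text{size}_1$ and $\text{size}_2$ as $L^2$ averages is not achievable for an arbitrary collection of squares, and ``Cauchy--Schwarz in the first two coordinates'' silently assumes an orthogonality that is not present.

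\emph{The range claim is not justified.} With a direct size/energy scheme (which is what your sketch describes), you would only obtain the restricted range $r'<p,q\le r$; the paper explicitly points this out at the start of Section~\ref{sec-localization}. The full range $r'<p,q<\infty$ requires the localization step — restricting to $\rr{S}(I_0)$, running stopping times on the intervals $I_0$ with the auxiliary size $\widetilde{\text{size}}$, and converting energies into local averages — and your proposal does not mention this mechanism. A one-line ``standard stopping-time decomposition ... produces the range'' does not account for it.
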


The paper is organized in the following way: in Section \ref{sec:model_op} we describe the discretization of the operator $T_r$, and introduce the new \emph{column} and \emph{row} structures of tiles. Related to these notions, we define new sizes and energies in Section \ref{sec:size_and_en}, which will be used in Section \ref{sec:gen-est} in order to establish a generic estimate for the trilinear form. Some refinements of the energy estimates are performed in Section \ref{sec-localization}, and the proof of Theorem \ref{thm:main} is presented in Section \ref{sec:main-proof}. Finally, in Section \ref{sec:an-application}, we present an application to generalized Bochner-Riesz bilinear multiplier for rough domains.

\subsection*{Acknowledgements}Both authors are supported by ERC project FAnFArE no. $637510$.

\section{The Model Operator and the Organization of the time-frequency Tiles}
\label{sec:model_op}

We start with a few definitions:
\begin{definition}
A \emph{time-frequency tile} is a rectangle $P= I \times \omega$ of area $1$, where $I$ and $\omega$ are dyadic intervals.

A tri-tile is a tuple $\ds s =\left( I_s \times\omega_1, I_s \times\omega_2, I_s \times\omega_3 \right)$, where each $\ds s_i=I_s \times \omega_i$ is a tile.
\end{definition}
\begin{definition}
For a fixed interval $I$, we denote
\begin{equation}
\ci_I(x):=\left( 1+\frac{\dist(x, I)}{\vert I \vert}\right)^{-10}.
\end{equation}
We say that a function $\phi$ is \emph{adapted} to $I$ if 
\[
\vert\phi^{\left( k \right)}(x)  \vert \leq C_{k, M} \frac{1}{\vert I \vert^k} \ci_I^M(x),
\]
for sufficiently many derivatives, and $M >0$ a large number.

Given a tile $P=I_P \times \omega_P$, we say that $\phi$ is a \emph{wave packet} associated to $P$ if $\phi$ is adapted to $I_P$, $\hat{\phi}$ is adapted to $\omega_P$, and $\hat{\phi}$ is supported inside $\dfrac{11}{10} \omega_P$.
\end{definition}

A first simplification of the operator $T_r$ consists in assuming that the squares $\omega \in \Omega$ are dyadic. This reduction is possible because the smooth cutoff $\Phi_\omega$, supported on $\omega=\omega_1 \times \omega_2$ can be replaced by a smooth cutoff supported on $\tilde{\omega}_1 \times \tilde{\omega}_2$, where $\ds \vert \omega_i \vert \sim \vert \tilde{\omega}_i \vert$ and the intervals $\tilde{\omega}_i$ are either dyadic intervals or shifted dyadic intervals (they are shifted a third of a unit to the left or to the right ). The function $\hat{\Phi}_\omega$ is replaced by its double Fourier series on $\ds \tilde{\omega}_1 \times \tilde{\omega}_2$:
\begin{equation*}
\hat{\Phi}_\omega(\xi, \eta)=\sum_{l, k}c_{l, k} \hat{\phi}_{\omega, 1, l, k}(\xi) \hat{\phi}_{\omega, 2, l, k}(\eta),
\end{equation*}
where $\hat{\phi}_{\omega, i, l, k}$ is smooth, supported on $ \dfrac{11}{10} \tilde{\omega}_i$, $\equiv 1$ on $\tilde{\omega}_i$. Since we will be working with the trilinear form associated to the operator $T_r$, we write 
\begin{equation*}
\hat{\Phi}_\omega(\xi, \eta)=\sum_{l, k} c_{l, k}\hat{\phi}_{\omega, 1, l, k}(\xi) \hat{\phi}_{\omega, 2, l, k}(\eta)= \sum_{l, k}c_{l, k} \hat{\phi}_{\omega, 1, l, k}(\xi) \hat{\phi}_{\omega, 2, l, k}(\eta) \hat{\phi}_{\omega, 3, l, k}(\xi + \eta) ,
\end{equation*}
where $\hat{\phi}_{\omega, 3, l, k}(\eta)$ is smooth, supported on $ \dfrac{11}{10} \tilde{\omega}_3$, $\equiv 1$ on $\tilde{\omega}_3$. Here $\tilde{\omega}_3$ is a (shifted) dyadic interval containing $\tilde{\omega}_1+\tilde{\omega}_2$, and so that $\ds \vert \tilde{\omega}_3\vert \sim \tilde{\omega}_1+\tilde{\omega}_2$. 

The fast decay of the Fourier coefficients (implied by the smoothness of $\hat{\Phi}_\omega$) ensures that the boundedness of the general case can be deduced from the boundedness of the dyadic case. Working with dyadic intervals simplifies the time-frequency analysis of the operator, merely because any two dyadic intervals are either disjoint or one of them is contained inside the other one.

In this way, we obtain a model operator of $T_r$ associated to a finite collection $\rr{S}$ of  tri-tiles  of the form
\begin{equation}
\label{def:tile}
s=\left(I_s \times \omega_1, I_s \times \omega_2, I_s \times \omega_3 \right).
\end{equation}
Here $\omega=\omega_1\times \omega_2 \in \Omega$ is a square contained in the collection $\Omega$ , and $\omega_3 \sim \omega_1+\omega_2$. In this case, if $s$ is of the form $\eqref{def:tile}$, we use the notation $\omega_s=\omega$, and $\omega_{s_j}=\omega_j$, for $1 \leq j \leq 3$. For any subcollection $\rr{S}'$ of tiles, we define
\[
\Omega(\rr{S}'):=\lbrace \omega \in \Omega: \exists s \in \rr{S}' \text{ such that  } \omega=\omega_1 \times \omega_2    \rbrace.
\]
Note that a frequency square $\omega$ could correspond to several tri-tiles: given $\omega \in \Omega(\rr{S'})$, there are possibly several tiles $\ds s, s' \in \rr{S}$ so that $\omega_s=\omega_{s'}$. 

Then the model operator for $T_r$ is given by 
\begin{equation}
(f, g) \mapsto  \left( \sum_{\omega \in \Omega} \lft \sum_{\substack{s \in \rr{S} \\ \omega_s=\omega}} \vert I_s \vert^{-\frac{1}{2}}\langle f, \phi_{s_1}\rangle \langle g, \phi_{s_2}\rangle \phi_{s_3}(x) \rg^r \right)^{1/r},
\end{equation}
where the functions $\phi_{s_j}$ are wave packets associated to the tiles $s \in \rr{S}$. The trilinear form, obtained by dualization with a function $\ds h =\lbrace h_\omega \rbrace_{\omega \in \Omega}$, is given by
\begin{equation}
\label{def:trilinean_form}
\Lambda_{\rr{S}} \left(f, g, h \right):=\sum_{s \in \rr{S}} \vert I_s \vert^{-1/2} \langle f, \phi_{s_1} \rangle \langle g, \phi_{s_2} \rangle \langle h_s, \phi_{s_3} \rangle,
\end{equation}
where $h_s=h_\omega$ whenever $\omega_s=\omega$.

\subsection{Columns and Column estimate}~\\
For the model operator of $T_r$, the geometry of the tiles is unconventional, and the \emph{tree}-structures from \cite{initial_BHT_paper} or \cite{multilinearMTT}, are replaced here by \emph{columns} and \emph{rows}. In this situation, there is no relation between the length of a tile in the column and the distance to the ``top" frequency. We have the following definitions:
\begin{definition}
A \emph{column} with top $t$ is a subcollection $\mathcal{C} \subseteq \rr{S}$ with the property that for all $s \in \mathcal{C}$,
\[
I_s \subseteq I_t \quad \text{and     } \omega_{t_1} \subseteq \omega_{s_1}.
\]
We denote the top tile of the column $\cc$ as $t_{\cc}:=I_{\cc} \times \omega_{\cc}$. Since the tiles are overlapping in the $\xi$ direction, they are going to be disjoint in the $\eta$ direction: \textit{ for all $s \in \mathcal{C}$, the intervals $\omega_{s_2}$ are mutually disjoint.}

Similarly, a \emph{row} with top $t$ is a subcollection $\ii{R} \subseteq \rr{S}$ with the property that for all $s \in \mathcal{R}$,
\[
I_s \subseteq I_t \quad \text{and     } \omega_{t_2} \subseteq \omega_{s_2}.
\]
We denote the top as $t_{\ii{R}}:=I_{\ii{R}} \times \omega_{\ii{R}}$. This time, the intervals $\ds \lbrace \omega_{s_1}\rbrace_{s \in \ii{R}}$ are mutually disjoint.
\end{definition}

\begin{definition}
\label{def:mutually-disj}
We say that the columns $\cc_1, \ldots, \cc_N$ are \emph{mutually disjoint} if they are disjoint sets of tri-tiles (that is, $\cc_i \cap \cc_j =\emptyset$  for all $i \neq j$), and
\[
\lbrace  I_{\cc_j} \times \omega_{\cc_j, 1}  \rbrace_{1 \leq j \leq N}
\]
represents a collection of mutually disjoint tiles: $\ds  I_{\cc_i} \times \omega_{\cc_i, 1} \cap  I_{\cc_j} \times \omega_{\cc_j, 1} =\emptyset$ for all $i \neq j$.
\emph{Mutually disjoint rows} are defined in a similar manner, but this time 
\[
\lbrace I_{\ii{R}_j} \times \omega_{\ii{R}_j, 2} \rbrace_{1 \leq j \leq N}
\]
form a collection of mutually disjoint tiles.
\end{definition}

The columns and rows are configurations suitable for the time-frequency analysis of $\Lambda_{\rr{S}}$: if we restrict our attention to columns, we get a nice estimate in Proposition \ref{prop:column est}, and similarly for rows. These estimates give rise to new ``sizes", which will be introduced in Section \ref{sec:size_and_en}.

\begin{proposition}
\label{prop:column est}
Let $\mathcal{C}$ be a column with top $t$. Then we have the following estimate:
\begin{align}
\label{eq: col est}
\big \vert \Lambda_{\mathcal{C}} (f, g, h) \big \vert & \lesssim \sup_{s \in \cc} \frac{\vert \langle f, \phi_{s_1}  \rangle \vert}{|I_s|^{1/2}} \cdot \left( \sup_{s \in \cc} \frac{\vert \langle  g, \phi_{s_2}  \rangle\vert}{|I_s|^{\frac{1}{2}}} \right)^{\frac{r-2}{r}} \\
&\cdot \left( \frac{1}{|I_t|}\sum_{s \in \cc} \vert \langle g, \phi_{s_2}  \rangle  \vert^2 \right)^{1/r} \cdot \left( \frac{1}{|I_t|}\int_{\rr{R}}  \sum_{\omega \in \Omega(\cc)} \lft \mathcal{M}\left( h_\omega \cdot \ci_{I_t}^{M} \right)\rg^{r'} \cdot \one_{I_t} dx \right)^{1/{r'}} \cdot |I_t|. \nonumber
\end{align}
Here $M>0$ can be as large as we wish and the implicit constant will depend on $M$.
\begin{proof}
First, note that we have (following H\"older's inequality), for every $\alpha >0$,
\begin{align*}
 \lft\Lambda_{{\mathcal C}}(f,g,h)\rg &= \lft \sum_{s \in \mathcal{C}}  \vert I_s  \vert^{-1/2} \langle f, \phi_{s_1}\rangle \langle g, \phi_{s_2}  \rangle \langle h_s, \phi_{s_3}  \rangle\rg \\
&\lesssim \sup_{s \in \cc} \frac{\vert \langle f, \phi_{s_1}  \rangle \vert}{|I_s|^{1/2}} \cdot \left( \sum_{s \in \cc} \left(\frac{|I_s|}{|I_t|}  \right)^{- \alpha r}  \vert \langle g, \phi_{s_2}  \rangle \vert^r  \right)^{1/r}  \cdot  \left( \sum_{s \in \cc} \left(\frac{|I_s|}{|I_t|}  \right)^{ \alpha r'}  \vert \langle h_s, \phi_{s_3}  \rangle  \vert^{r'} \right)^{1/{r'}},
\end{align*}
and in what follows we will focus on the second and third term. For $g$, since $r>2$, we have
\begin{align*}
&\left( \sum_{s \in \cc} \left(\frac{|I_s|}{|I_t|}  \right)^{- \alpha r}  \vert \langle g, \phi_{s_2}  \rangle  \vert^r \right)^{1/r} \\
&\qquad =\left(\sum_{s \in \cc} \frac{\vert \langle  g, \phi_{s_2}  \rangle\vert^{r-2}}{|I_s|^{\frac{r-2}{2}}}   \cdot \vert I_s \vert^{\frac{r-2}{2}} \cdot  \left(\frac{|I_s|}{|I_t|}  \right)^{- \alpha r}   \vert \langle g, \phi_{s_2}  \rangle  \vert^2 \right)^{1/r}  \\
&\qquad \lesssim  \left( \sup_{s \in \cc} \frac{\vert \langle  g, \phi_{s_2}  \rangle\vert}{|I_s|^{\frac{1}{2}}} \right)^{\frac{r-2}{r}} \cdot \left(\sum_{s \in \cc} \vert \langle g, \phi_{s_2}  \rangle  \vert^2 \right)^{1/r} \cdot \vert I_t \vert^\alpha,
\end{align*}
provided $\ds \alpha r =\frac{r-2}{2}$, which is equivalent to $\ds \alpha =\frac{1}{2}-\frac{1}{r} >0$. The last term will be slightly more technical:
\begin{align*}
& \sum_{s \in \cc} \left(\frac{|I_s|}{|I_t|}  \right)^{ \alpha r'}  \vert \langle h_s, \phi_{s_3}  \rangle  \vert^{r'} \\
&\qquad = \sum_{l \geq 0} \ \sum_{|\omega|^{-1}=2^{-l}|I_t|} \ \sum_{\substack{s \in \cc\\ \omega_s=\omega}} \left(\frac{|I_s|}{|I_t|}  \right)^{ \alpha r'}  \frac{\vert \langle h_\omega, \phi_{s_3} \rangle \vert^{r'}}{|I_s|^{r'/2}} \cdot \frac{|I_s|^{r'/2}}{|I_t|^{r'/2}} \cdot |I_t|^{r'/2}\\
&\qquad =|I_t|^{r'/2} \sum_{l \geq 0} 2^{-l r' \left( \alpha +\frac{1}{2}  \right)} \sum_{|\omega|^{-1}=2^{-l}|I_t|} \ \sum_{\substack{s \in \cc\\ \omega_s=\omega}} \left( \frac{\vert \langle h_\omega, \phi_{s_3} \rangle \vert}{|I_s|^{1/2}}  \right)^{r'}.
\end{align*}
Now we observe that
\begin{equation}
\label{eq:how_we_got_M}
\frac{\vert \langle h_\omega, \phi_{s_3} \rangle \vert}{|I_s|^{1/2}} \lesssim \inf_{y \in I_s} \lft \mathcal{M} \left( h_{\omega} \cdot \ci_{I_s}^{M} \right) (y)\rg \lesssim \inf_{y \in I_s} \lft \mathcal{M} \left( h_{\omega} \cdot \ci_{I_t}^{M} \right) (y)\rg,
\end{equation}
since the bump functions $\phi_{s_3}$ are $L^2$-normalized and adapted to $I_s$. This implies that 
\begin{align*}
& \sum_{s \in \cc} \left(\frac{|I_s|}{|I_t|}  \right)^{ \alpha r'}  \vert \langle h_s, \phi_{s_3}  \rangle  \vert^{r'} \\
&\lesssim \vert  I_t \vert^{\frac{r'}{2}} \sum_{l \geq 0} 2^{-l r' \left(\alpha+\frac{1}{2}\right)} \sum_{\vert \omega\vert^{-1}=2^{-l} \vert I_t\vert} \ \sum_{\substack{s \in \cc\\ \omega_s=\omega}} \frac{1}{\vert I_s \vert} \int_{\rr{R}} \lft \mathcal{M} \left( h_{\omega} \cdot \ci_{I_t}^{M} \right) (y)\rg^{r'} \cdot \one_{I_s} dx \\
&\lesssim \vert  I_t \vert^{\frac{r'}{2}}\sum_{l \geq 0} 2^{-l r' \left(\alpha+\frac{1}{2} -\frac{1}{r'}\right)} \sum_{\vert \omega\vert^{-1}=2^{-l} \vert I_t\vert} \ \sum_{\substack{s \in \cc\\ \omega_s=\omega}} \frac{1}{\vert I_t \vert} \int_{\rr{R}} \lft \mathcal{M} \left( h_{\omega} \cdot \ci_{I_t}^{M} \right) (y)\rg^{r'} \cdot \one_{I_s} dx \\
&\lesssim \vert  I_t \vert^{\frac{r'}{2}} \cdot \frac{1}{\vert I_t \vert} \int_{\rr{R}} \sum_{\omega \in \Omega\left( \cc \right)} \lft \mathcal{M} \left( h_{\omega} \cdot \ci_{I_t}^{M} \right) (y)\rg^{r'} \cdot \one_{I_t} dx.
\end{align*} 
Above, the definition of $\alpha$ yields  $\alpha+\frac{1}{2}=\frac{1}{r'}$.

Carefully adding all these estimates together, we get that  
\begin{align*}
&\big \vert \sum_{s \in \mathcal{C}}  \vert I_s  \vert^{-1/2} \langle f, \phi_{s_1}\rangle \langle g, \phi_{s_2}  \rangle \langle h_s, \phi_{s_3}  \rangle\big \vert \\
&\lesssim \sup_{s \in \cc} \frac{\vert \langle f, \phi_{s_1}  \rangle \vert}{|I_s|^{1/2}} \cdot \left( \sup_{s \in \cc} \frac{\vert \langle  g, \phi_{s_2}  \rangle\vert}{|I_s|^{\frac{1}{2}}} \right)^{\frac{r-2}{r}} \\
&\cdot \left(\sum_{s \in \cc} \vert \langle g, \phi_{s_2}  \rangle  \vert^2 \right)^{1/r} \cdot \vert I_t \vert^{1/2- 1/r} \cdot  |I_t|^{1/2-1/{r'}} \left( \int_{\rr{R}} \sum_{\omega \in \Omega\left( \cc \right)} \lft \mathcal{M} \left( h_{\omega} \cdot \ci_{I_t}^{M} \right) (y)\rg^{r'} \cdot \one_{I_t} dx \right)^{1/{r'}},
\end{align*}
which is precisely \eqref{eq: col est}.
\end{proof}
\end{proposition}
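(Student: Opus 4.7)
The plan is to bound $\Lambda_{\cc}(f,g,h)$ by a three-fold H\"older inequality, singling out the sup of the normalized $f$-coefficients and distributing weights $(|I_s|/|I_t|)^{\pm\alpha}$ between the $g$ and $h$ pieces so that the weights cancel in the product. Writing
\[
|\Lambda_\cc(f,g,h)| \leq \sup_{s \in \cc} \frac{|\langle f,\phi_{s_1}\rangle|}{|I_s|^{1/2}} \cdot \Bigl( \sum_{s \in \cc} \bigl(\tfrac{|I_s|}{|I_t|}\bigr)^{-\alpha r} |\langle g,\phi_{s_2}\rangle|^r \Bigr)^{1/r} \cdot \Bigl( \sum_{s \in \cc} \bigl(\tfrac{|I_s|}{|I_t|}\bigr)^{\alpha r'} |\langle h_s,\phi_{s_3}\rangle|^{r'} \Bigr)^{1/r'},
\]
the plan is to pick $\alpha>0$ by the constraint coming from the $g$-factor and check that the same $\alpha$ produces exactly the right exponent for the $h$-factor.

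For the $g$-factor, since $r>2$, I would peel off an $L^\infty$ piece by writing $|\langle g,\phi_{s_2}\rangle|^r = (|\langle g,\phi_{s_2}\rangle|/|I_s|^{1/2})^{r-2} \cdot |I_s|^{(r-2)/2} \cdot |\langle g,\phi_{s_2}\rangle|^2$ and taking the sup of the first factor out of the sum. The $(|I_s|/|I_t|)^{-\alpha r}$ weight combines cleanly with $|I_s|^{(r-2)/2}$ provided $\alpha r = (r-2)/2$, i.e.\ $\alpha = \tfrac{1}{2} - \tfrac{1}{r}$; this positive choice is precisely what yields $\alpha + \tfrac{1}{2} = \tfrac{1}{r'}$, which will be needed in the $h$-estimate. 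The remainder is the $\ell^2$ Bessel-type sum $\sum_{s\in\cc}|\langle g,\phi_{s_2}\rangle|^2$, consistent with the statement.

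For the $h$-factor, I would first use that $\phi_{s_3}$ is an $L^2$-normalized wave packet adapted to $I_s$, so the standard pointwise bound
\[
\frac{|\langle h_\omega, \phi_{s_3}\rangle|}{|I_s|^{1/2}} \lesssim \inf_{y\in I_s}\mathcal{M}\bigl(h_\omega \cdot \ci_{I_t}^{M}\bigr)(y)
\]
gives a representation of each coefficient as an integral $\tfrac{1}{|I_s|}\int_{I_s} [\mathcal{M}(h_\omega \ci_{I_t}^M)]^{r'}$. I would then slice the sum by the relative scale $|I_s|/|I_t|=2^{-l}$, rewriting $\tfrac{1}{|I_s|} = \tfrac{2^l}{|I_t|}$; the combined $l$-exponent is $-l r'(\alpha + \tfrac{1}{2} - \tfrac{1}{r'}) = 0$ with the above choice of $\alpha$. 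The resulting double sum is then collapsed using the key column fact that for each fixed $\omega$ the intervals $\{I_s : s\in\cc,\ \omega_s=\omega\}$ are dyadic intervals of the same length inside $I_t$, hence disjoint, so $\sum_{s:\omega_s=\omega}\one_{I_s} \leq \one_{I_t}$, allowing the sum over scales and over $\omega_{s_2}$ to be absorbed into a single sum over $\omega\in\Omega(\cc)$.

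The delicate point is the bookkeeping in the $h$-estimate: one must check simultaneously that the algebraic relation $\alpha + \tfrac{1}{2} = \tfrac{1}{r'}$ holds, that the scale-loss $2^l$ from renormalizing $1/|I_s|$ to $1/|I_t|$ is exactly compensated by the weight, and that for each fixed $\omega$ the spatial intervals $I_s$ fit disjointly in $I_t$. Once these three facts line up, multiplying the three H\"older factors together produces the claimed estimate with the correct power of $|I_t|$ (namely $|I_t|^{\alpha + (1-1/r) + (1/2-1/r')} = |I_t|^{1}$ after combining with the $|I_t|^{r'/2}$ from the $h$-piece and the $|I_t|^{\alpha}$ from the $g$-piece) and the rest is cosmetic rearrangement.
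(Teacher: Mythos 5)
Your proposal reproduces the paper's proof essentially verbatim: the same three-fold H\"older with compensating weights $(|I_s|/|I_t|)^{\pm\alpha}$, the same choice $\alpha = \tfrac12 - \tfrac1r$ forced by the $g$-factor and yielding $\alpha+\tfrac12=\tfrac1{r'}$, the same maximal-function pointwise bound for the $h$-coefficients, the same slicing by the relative scale $2^{-l}=|I_s|/|I_t|$, and the same disjointness of the spatial intervals $\{I_s : \omega_s=\omega\}$ inside $I_t$. The only (harmless) blemish is the garbled bookkeeping of the total power of $|I_t|$ in the last sentence, which should come out to $|I_t|^{\alpha + 1/r}\cdot|I_t|^{1/2}=|I_t|$; the argument itself is sound.
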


Similarly, we have estimates for a row $\ii{R}$:
\begin{proposition}
\label{prop:row est}
If $\ii{R} \subseteq \rr{S}$ is a row of top $t$, then,
\begin{align}
\label{eq: row est}
\big \vert \Lambda_{\ii{R}} (f, g, h) \big \vert & \lesssim \left( \sup_{s \in \ii{R}} \frac{\vert \langle  f, \phi_{s_1}  \rangle\vert}{|I_s|^{\frac{1}{2}}} \right)^{\frac{r-2}{r}} \cdot \left( \frac{1}{|I_t|}\sum_{s \in \ii{R}} \vert \langle f, \phi_{s_1}  \rangle  \vert^2 \right)^{1/r} \\
&\cdot \sup_{s \in \ii{R}} \frac{\vert \langle g, \phi_{s_2}  \rangle \vert}{|I_s|^{1/2}} \cdot \left( \frac{1}{|I_t|}\int_{\rr{R}}  \sum_{\omega \in \Omega(\ii{R})} \lft \mathcal{M}\left( h_\omega \cdot \ci_{I_t}^{M} \right)\rg^{r'} \cdot \one_{I_t} dx \right)^{1/{r'}} \cdot |I_t|. \nonumber
\end{align}
\end{proposition}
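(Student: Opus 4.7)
The plan is to follow the argument of Proposition \ref{prop:column est} with the roles of $f$ and $g$ exchanged, since a row differs from a column only in that the disjointness is now on the side of the $\omega_{s_1}$'s and the containment is $\omega_{t_2} \subseteq \omega_{s_2}$. Accordingly, the factor treated by a bare supremum in the column case (the $f$-factor) should become the $g$-factor here, and the factor receiving the combined $(\sup)^{(r-2)/r}$ and $\ell^2$-sum treatment (previously the $g$-factor) should become the $f$-factor.

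Concretely, I would apply H\"older's inequality at exponents $(\infty, r, r')$ to
\[
\Lambda_{\ii{R}}(f,g,h) = \sum_{s \in \ii{R}} |I_s|^{-1/2} \langle f, \phi_{s_1}\rangle \langle g, \phi_{s_2}\rangle \langle h_s, \phi_{s_3}\rangle,
\]
inserting cancelling powers $(|I_s|/|I_t|)^{\pm \alpha}$ for a parameter $\alpha > 0$ to be chosen, and placing the $L^\infty$ factor on the $g$-term. This produces
\[
|\Lambda_{\ii{R}}(f,g,h)| \lesssim \sup_{s \in \ii{R}} \frac{|\langle g, \phi_{s_2}\rangle|}{|I_s|^{1/2}} \cdot \Bigl(\sum_{s} \bigl(\tfrac{|I_s|}{|I_t|}\bigr)^{-\alpha r} |\langle f, \phi_{s_1}\rangle|^r\Bigr)^{1/r} \cdot \Bigl(\sum_{s} \bigl(\tfrac{|I_s|}{|I_t|}\bigr)^{\alpha r'} |\langle h_s, \phi_{s_3}\rangle|^{r'}\Bigr)^{1/r'}.
\]
The $f$-factor is handled by writing $|\langle f, \phi_{s_1}\rangle|^r = (|\langle f, \phi_{s_1}\rangle|/|I_s|^{1/2})^{r-2} \cdot |I_s|^{(r-2)/2} \cdot |\langle f, \phi_{s_1}\rangle|^2$, pulling out $(\sup_s |\langle f, \phi_{s_1}\rangle|/|I_s|^{1/2})^{r-2}$ outside the sum, and choosing $\alpha = 1/2 - 1/r$ (which is positive since $r > 2$) so that the geometric prefactor $(|I_s|/|I_t|)^{-\alpha r} |I_s|^{(r-2)/2}$ collapses to $|I_t|^\alpha$.

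The $h$-factor is treated identically to the column case: organize the tiles by the scale $2^l = |I_t|/|I_s|$, apply \eqref{eq:how_we_got_M} to bound $|\langle h_\omega, \phi_{s_3}\rangle|/|I_s|^{1/2}$ pointwise by $\inf_{I_s} \mathcal{M}(h_\omega \cdot \ci_{I_t}^M)$, and use the identity $\alpha + 1/2 = 1/r'$ (a consequence of the same choice of $\alpha$) to absorb the scale sum, yielding $|I_t|^{r'/2} \cdot |I_t|^{-1} \int_{\rr{R}} \sum_{\omega \in \Omega(\ii{R})} \mathcal{M}(h_\omega \cdot \ci_{I_t}^M)^{r'} \one_{I_t} dx$. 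I do not anticipate a genuine obstacle: the $h$-argument relies only on $|I_s| \cdot |\omega_{s_3}| \sim 1$, the containment $I_s \subseteq I_t$, and the fact that each $\omega \in \Omega(\ii{R})$ corresponds to at most one $s \in \ii{R}$, which follows from the mutual disjointness of $\{\omega_{s_1}\}_{s \in \ii{R}}$ exactly as the analogous statement for columns used the disjointness of $\{\omega_{s_2}\}_{s \in \cc}$. Assembling the three factors yields \eqref{eq: row est}.
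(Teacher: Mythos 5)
Your proposal is correct and is exactly the argument the paper intends (the paper gives no separate proof for the row case, only the word ``Similarly''): one swaps the roles of $f$ and $g$ in the proof of Proposition~\ref{prop:column est}, placing the bare supremum on the $g$-factor and the combined $(\sup)^{(r-2)/r}\cdot(\ell^2)^{1/r}$ treatment on the $f$-factor, with the same choice $\alpha = \frac12 - \frac1r$ and the same handling of the $h$-factor via \eqref{eq:how_we_got_M}.

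One small inaccuracy worth flagging: your statement that ``each $\omega\in\Omega(\ii{R})$ corresponds to at most one $s\in\ii{R}$, which follows from the mutual disjointness of $\{\omega_{s_1}\}_{s\in\ii{R}}$'' is not quite right, and in fact the paper explicitly notes after \eqref{def:tile} that a frequency square $\omega$ may correspond to several tri-tiles. Two tri-tiles $s\ne s'$ in a row can have $\omega_s=\omega_{s'}$ (hence $\omega_{s_1}=\omega_{s'_1}$) with distinct spatial intervals $I_s\ne I_{s'}$. This does not affect the estimate, however, because the column proof you are mirroring never needs this: what it uses is that for a fixed $\omega$, the intervals $\{I_s : \omega_s=\omega\}$ all have the same dyadic length (forced by the area-one condition) and lie inside $I_t$, so they are pairwise disjoint and $\sum_{s:\,\omega_s=\omega}\one_{I_s}\le\one_{I_t}$. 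The disjointness of the $\omega_{s_1}$'s (for rows) and $\omega_{s_2}$'s (for columns) plays no role in the $h$-factor at all; it enters only later, when converting the $\ell^2$ sum $\sum_s|\langle f,\phi_{s_1}\rangle|^2$ into a local $L^2$ norm of $f$ via orthogonality, and that conversion is not part of Proposition~\ref{prop:row est}. With this correction the rest of your argument stands as written.
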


\begin{proposition}
If $\cc \subseteq \rr{S}$ is a column, then 
\begin{equation}
 \frac{1}{|I_{\cc}|}\sum_{s \in \cc} \vert \langle g, \phi_{s_2}  \rangle  \vert^2  \lesssim   \frac{1}{\vert I_{\cc} \vert} \int_{\rr{R}} \vert g(x) \vert^2 \cdot \ci_{I_{\cc}}^{10} dx.
\end{equation}
\begin{proof}
This follows easily from orthogonality arguments, and the fast decay of the bump functions.
\end{proof}
\end{proposition}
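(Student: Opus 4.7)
The plan is to interpret the inequality as a weighted Bessel-type estimate, exploiting the structural disjointness built into the column $\cc$. There are two disjointness inputs: (a) across distinct frequency squares $\omega, \omega' \in \Omega(\cc)$, the intervals $\omega_{s_2}$ and $\omega_{s'_2}$ are disjoint; (b) for each fixed $\omega \in \Omega(\cc)$, the tiles $s \in \cc$ with $\omega_s = \omega$ have pairwise disjoint spatial intervals $I_s$ of common length $|\omega_{s_2}|^{-1}$, all contained in $I_\cc$.

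First I would absorb the weight into the test functions by writing $\langle g, \phi_{s_2}\rangle = \langle g\cdot \ci_{I_\cc}^5,\ \ci_{I_\cc}^{-5}\cdot \phi_{s_2}\rangle$. Setting $G := g\cdot \ci_{I_\cc}^5$ and $\tilde\phi_s := \ci_{I_\cc}^{-5}\cdot \phi_{s_2}$ (or, to keep the Fourier analysis clean, replacing $\ci_{I_\cc}^{-5}$ by a smooth majorant of scale $|I_\cc|$), the proposition reduces to the unweighted Bessel inequality $\sum_{s \in \cc}|\langle G, \tilde\phi_s\rangle|^2 \lesssim \|G\|_2^2$. The pointwise bound $\ci_{I_s} \leq \ci_{I_\cc}$ (which holds because $I_s \subseteq I_\cc$) shows that $\tilde\phi_s$ inherits the adaptation to $I_s$ up to a mild decay loss, with $\|\tilde\phi_s\|_2 \lesssim 1$; and its Fourier support lies in an $O(|I_\cc|^{-1})$-neighborhood of $\omega_{s_2}$, which is harmless since $|\omega_{s_2}| = |I_s|^{-1} \geq |I_\cc|^{-1}$.

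Then I would split the sum by frequency: $\cc = \bigsqcup_{\omega \in \Omega(\cc)} S_\omega$ with $S_\omega = \{s \in \cc : \omega_s = \omega\}$. Since the fattened $\omega_{s_2}$'s have bounded overlap by (a), Plancherel reduces matters to the single-frequency Bessel bound $\sum_{s \in S_\omega} |\langle G, \tilde\phi_s\rangle|^2 \lesssim \|P_{\omega_2}G\|_2^2$, where $P_{\omega_2}$ is a smooth Fourier projection to an enlargement of $\omega_2$. Within $S_\omega$ the packets $\tilde\phi_s$ are essentially spatial translates of a common Schwartz profile sitting on the pairwise disjoint intervals $\{I_s\}_{s \in S_\omega}$, so an off-diagonal estimate $|\langle \tilde\phi_s, \tilde\phi_{s'}\rangle| \lesssim (1+\dist(I_s,I_{s'})/|I_s|)^{-M}$ combined with Schur's test (or Cotlar--Stein almost-orthogonality) yields the single-frequency bound. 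Summing over $\omega$ via Plancherel gives the desired $\sum_s |\langle G, \tilde\phi_s\rangle|^2 \lesssim \|G\|_2^2 = \int |g|^2 \ci_{I_\cc}^{10}\,dx$.

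The main technical obstacle is confirming that the polynomial weight $\ci_{I_\cc}^{-5}$ does not spoil the near-disjointness of the Fourier supports of the modified packets; this is cleanly handled by using a smooth weight of scale $|I_\cc|$ from the outset, so that its Fourier spread is $\lesssim |I_\cc|^{-1}$ and hence dominated by the frequency gap $|\omega_{s_2}| \geq |I_\cc|^{-1}$.
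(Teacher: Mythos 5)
Your proposal is correct and fills in precisely the ``orthogonality arguments and fast decay'' that the paper asserts in one line: the two-tier disjointness you isolate --- disjoint $\omega_{s_2}$'s across distinct frequency squares $\omega\in\Omega(\cc)$, and disjoint equal-length $I_s$'s within a fixed $\omega$ --- is exactly the structural content of the column definition, and the weighted Bessel / Cotlar--Stein scheme (absorb $\ci_{I_\cc}^{5}$ into $g$, check that the modified packets $\tilde\phi_s=\ci_{I_\cc}^{-5}\phi_{s_2}$ stay adapted to $I_s$ because $\ci_{I_s}\le\ci_{I_\cc}$ and $\ci_{I_\cc}^{-5}$ is slowly varying at the coarsest scale $|I_\cc|$, then run Schur within each $S_\omega$ and sum over $\omega$ with bounded overlap of fattened frequency supports) is the natural way to turn that into the stated bound. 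The one step that deserves full care is the one you already flag --- that dividing by $\ci_{I_\cc}^{5}$ spreads the Fourier supports by only $O(|I_\cc|^{-1})\le|\omega_{s_2}|$, so the bounded-overlap and off-diagonal decay needed for Schur's test survive --- and with that checked the argument goes through.
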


\section{Sizes and Energies}
\label{sec:size_and_en}
Motivated by the estimates in Propositions \ref{prop:column est} and \ref{prop:row est}, we define \emph{sizes} with respect to a collection $\rr{S}$ of tiles,  in the following way:
\begin{definition} For $f\in L^1_{\textrm{loc}}({\mathbb R})$ and $\rr{S}$ a collection of tiles, we set
\begin{equation}
\label{eq:def:size_f} \ssize_{\rr{S}} \left( f \right):= \sup_{s \in \rr{S}} \frac{\vert \langle f, \phi_{s_1} \rangle \vert}{\vert I_s \vert^{1/2}}.
\end{equation}
Similarly for $g\in L^1_{\textrm{loc}}({\mathbb R})$, 
\begin{equation}
\label{eq:def:size_g} \ssize_{\rr{S}} \left( g \right):= \sup_{s \in \rr{S}} \frac{\vert \langle g, \phi_{s_2} \rangle \vert}{\vert I_s \vert^{1/2}}.
\end{equation}
\end{definition} 

\begin{definition}
For a sequence $\ds h=\lbrace h_\omega\rbrace_{\omega \in \Omega}$ of $L^1_{\textrm{loc}}(\ell^{r'}(\Omega))$, the size is defined as
\begin{equation}
\label{eq:def:size_h}
\ssize_{\rr{S}} \left( h \right):= \sup_{\substack{\mathcal{T} \subseteq \rr{S} \\ \mathcal{T} \text{ column or row} \\ \text{ with top  }t}} \left( \frac{1}{|I_t|}\int_{\rr{R}}  \sum_{\omega \in \Omega(\mathcal{T})} \lft \mathcal{M}\left( h_\omega \cdot \ci_{I_t}^{M} \right)\rg^{r'} \cdot \one_{I_t} dx \right)^{1/{r'}}.
\end{equation}
\end{definition}

Correspondingly, the \emph{energies} with respect to a collection $\rr{S}$ are constructed as follows:
\begin{definition}\label{def:en_f}
 For $f\in L^1_{\textrm{loc}}({\mathbb R})$, we define
\begin{equation}
\label{eq:def:en_f}
\eenergy_{\rr{S}}\left( f \right):= \sup_{n \in \rr{Z}} \ 2^n \left( \sum_{ \cc \in \mathfrak{C}} \vert I_{\cc}\vert \right)^{1/2},
\end{equation}
where $\mathfrak{C}$ ranges over all collections of mutually disjoint columns $\cc \subseteq \rr{S}$ (see Definition \ref{def:mutually-disj}), so that 
\[
\frac{\vert \langle f, \phi_{s_1} \rangle \vert}{\vert I_s \vert^{1/2}} \leq 2^{n+1}, \enskip \text{for all } s \in \cc
\]
and whose tops satisfy
\[
\frac{\vert \langle f, \phi_{t_{\cc,1}} \rangle \vert}{\vert I_{\cc}\vert^{1/2}} \geq 2^n, \enskip \text{for all } \cc \in \mathfrak{C}.
\]
Also, we have for $g\in L^1_{\textrm{loc}}({\mathbb R})$
\begin{equation}
\label{eq:def:en_g}
\eenergy_{\rr{S}}\left( g \right):= \sup_{n \in \rr{Z}} \ 2^n \left( \sum_{ \ii{R} \in \mathfrak{R}} \vert I_{\ii{R}}\vert \right)^{1/2},
\end{equation}
where $\mathfrak{R}$ ranges over all collections of mutually disjoint rows $\ii{R} \subseteq \rr{S}$ with the property that
\[
\frac{\vert \langle g, \phi_{s_2} \rangle \vert}{\vert I_s \vert^{1/2}} \leq 2^{n+1}, \enskip \text{for all } s \in \ii{R},
\]
and whose tops satisfy
\[
\frac{\vert \langle g, \phi_{t_{\ii{R},2}} \rangle \vert}{\vert I_{\ii{R}}\vert^{1/2}} \geq 2^n, \enskip \text{for all } \ii{R} \in \mathfrak{R}.
\]
\end{definition}
\begin{definition}
\label{def:en_h}
Given a sequence of functions $\ds h=\lbrace h_\omega \rbrace_{\omega \in \Omega}$, and a collection of tiles $\rr{S}$, we set
\begin{equation}
\label{eq:def:en_h}
\eenergy_{\rr{S}}\left( h \right):=\sup_{n \in \rr{Z}} \ 2^n \left( \sup_{\mathcal{T} \in \mathfrak{T}} \vert I_{\mathcal{T}} \vert\right)^{1/{r'}},
\end{equation}
where $\mathfrak{T}$ ranges over all collections of mutually disjoint rows and mutually disjoint columns (with top $t=I_{\mathcal{T}} \times \omega_{\mathcal{T}}$) satisfying
\begin{equation}
\label{eq:size_h:constr}
\left( \frac{1}{|I_{\mathcal{T}}|}\int_{\rr{R}}  \sum_{\omega \in \Omega(\mathcal{T})} \lft \mathcal{M}\left( h_\omega \cdot \ci_{I_{\mathcal{T}}}^{M} \right)\rg^{r'} \cdot \one_{I_{\mathcal{T}}} dx \right)^{1/{r'}} \geq 2^n.
\end{equation}
In fact, $\mathfrak{T}$ will be the union of a collection $\mathfrak{C}$ of mutually disjoint columns and a collection $\mathfrak{R}$ of mutually disjoint rows, where every column or row satisfies \eqref{eq:def:en_h}.
\end{definition}

We will need to bound these quantities, but this procedure is rather standard.
\begin{proposition}
\label{prop:size_est}
For any locally integrable function $f$ and any collection of tiles $\rr{S}$
\begin{equation}
\label{eq:size_est}
\ssize_{\rr{S}}\left( f \right) \lesssim \sup_{s \in \rr{S}} \ \frac{1}{\vert I_s \vert} \int_{\rr{R}} \vert f(x)\vert \cdot \ci_{I_s}^M(x) dx, 
\end{equation}
for $M > 0$ arbitrarily large, with the implicit constant depending on $M$. A similar estimate holds for $\ssize_{\rr{S}}\left( g \right)$.
\end{proposition}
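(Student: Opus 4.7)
The statement is a routine pointwise bound for the size functional, and the proof is a one-line computation once we unpack the definition of a wave packet. The plan is to pair $f$ against $\phi_{s_1}$ directly, using the pointwise decay of the wave packet, and then take the supremum over $s \in \rr{S}$.

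First, I would recall that $\phi_{s_1}$ is a wave packet associated to the tile $s_1 = I_s \times \omega_1$, hence it is $L^2$-normalized and adapted to $I_s$. In particular (as noted in the proof of Proposition \ref{prop:column est} when \eqref{eq:how_we_got_M} is derived), this gives the pointwise bound
\[
|\phi_{s_1}(x)| \lesssim |I_s|^{-1/2}\, \ci_{I_s}^{M}(x),
\]
for any $M > 0$, with implicit constant depending on $M$.

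Next, I would apply the triangle inequality to the pairing: for any fixed $s \in \rr{S}$,
\[
|\langle f, \phi_{s_1}\rangle| \leq \int_{\rr{R}} |f(x)|\,|\phi_{s_1}(x)|\, dx \lesssim |I_s|^{-1/2} \int_{\rr{R}} |f(x)|\, \ci_{I_s}^{M}(x)\, dx.
\]
Dividing by $|I_s|^{1/2}$ yields
\[
\frac{|\langle f, \phi_{s_1}\rangle|}{|I_s|^{1/2}} \lesssim \frac{1}{|I_s|}\int_{\rr{R}} |f(x)|\, \ci_{I_s}^{M}(x)\, dx,
\]
and taking the supremum over $s \in \rr{S}$ gives precisely \eqref{eq:size_est}. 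The estimate for $\ssize_{\rr{S}}(g)$ is identical, using the wave packets $\phi_{s_2}$ in place of $\phi_{s_1}$.

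There is essentially no obstacle here: the content is entirely the wave packet decay $|\phi_{s_1}(x)| \lesssim |I_s|^{-1/2} \ci_{I_s}^M(x)$, which is a standard consequence of the definition of "adapted" together with the $L^2$-normalization convention used throughout the paper. The only thing to be careful about is that the $M$ appearing in the final bound is inherited from the decay of the wave packet, and can be taken as large as desired at the price of a worse implicit constant.
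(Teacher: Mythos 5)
Your proof is correct and is exactly the argument the paper has in mind: the paper omits a proof but notes immediately afterward that ``In Proposition \ref{prop:size_est}, we only make use of the fast decay of the wave packets $\phi_{s_j}$,'' which is precisely the pointwise bound $|\phi_{s_1}(x)| \lesssim |I_s|^{-1/2}\ci_{I_s}^M(x)$ you invoke before applying the triangle inequality and taking the supremum.
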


In Proposition \ref{prop:size_est}, we only make use of the fast decay of the wave packets $\phi_{s_j}$. However, for the energy, it is of utmost importance that the top tiles $\ds \lbrace I_{\cc} \times \omega_{\cc, 1} \rbrace_{\cc \in \mathfrak{C}}$ are mutually disjoint tiles, whenever $\mathfrak{C}$ represents a collection of mutually disjoint columns.

\begin{proposition}
\label{propo:en_est_f}
For every functions $f,g\in L^2({\mathbb R})$ we have
\[
\eenergy_{\rr{S}}\left( f \right) \lesssim \|  f \|_2 \qquad \textrm{and} \qquad  \eenergy_{\rr{S}}\left( g \right) \lesssim \|  g \|_2.
\]
\begin{proof}
This is very similar to Lemma 5.1 from \cite{wave_packet}, but we present the details for completeness. Assume that $n$ and $\mathfrak{C}$ are energy maximizers in Definition \ref{def:en_f}. For the top intervals we have
\[
2^{n} \leq \frac{\left| \langle f, \phi_{t_{\cc, 1}}  \rangle\right|}{\vert I_{\cc} \vert^{1/2}} \leq 2^{n+1}, \enskip \text{for all  }\enskip \cc \in \mathfrak{C}.
\]
Following the definition, we have 
\[
\left( \eenergy_{\rr{S}}(f) \right)^2= 2^{2n} \sum_{\cc \in \mathfrak{C}} \vert I_{\cc} \vert \leq \sum_{\cc \in \mathfrak{C}} \left| \langle f, \phi_{t_{\cc, 1}} \rangle \right|^2 \leq \|f\|_2 \cdot \left\| \sum_{\cc \in \mathfrak{C}} \langle f, \phi_{t_{\cc, 1}}  \rangle \phi_{t_{\cc, 1}}  \right\|_2,
\]
and it will be enough to prove $\ds \left\| \sum_{\cc \in \mathfrak{C}} \langle f, \phi_{t_{\cc, 1}}  \rangle \phi_{t_{\cc, 1}}  \right\|_2 \leq \left( 2^{2n} \sum_{\cc \in \mathfrak{C}} \vert I_{\cc} \vert \right)^{1/2}$. To this end, we compute:
\begin{align*}
\left\| \sum_{\cc \in \mathfrak{C}} \langle f, \phi_{t_{\cc, 1}}  \rangle \phi_{t_{\cc, 1}}  \right\|_2^2=\sum_{\cc, \cc' \in \mathfrak{C}} \langle f, \phi_{t_{\cc, 1}} \rangle \langle f, \phi_{t_{\cc', 1}}\rangle \langle \phi_{t_{\cc, 1}}, \phi_{t_{\cc', 1}}\rangle.
\end{align*}

The only way $\ds \langle \phi_{t_{\cc, 1}}, \phi_{t_{\cc', 1}}\rangle \neq 0$ is if $\ds \omega_{\cc, 1} \cap \omega_{\cc', 1} \neq \emptyset$. By symmetry, we can estimate
\begin{align*}
&\left\| \sum_{\cc \in \mathfrak{C}} \langle f, \phi_{t_{\cc, 1}}  \rangle \phi_{t_{\cc, 1}}  \right\|_2^2 \lesssim \sum_{\substack{\cc, \cc' \in \mathfrak{C}\\ \omega_{\cc, 1} \subseteq \omega_{\cc', 1}}} \left| \langle f, \phi_{t_{\cc, 1}} \rangle \langle f, \phi_{t_{\cc', 1}}\rangle \langle \phi_{t_{\cc, 1}}, \phi_{t_{\cc', 1}}\rangle \right|\\
&\lesssim \sum_{\cc \in \mathfrak{C}} \sum_{\substack{\cc' \in \mathfrak{C} \\ \omega_{\cc, 1} \subseteq \omega_{\cc', 1}}} \left| \langle f, \phi_{t_{\cc, 1}} \rangle \right| \cdot \left| \langle f, \phi_{t_{\cc', 1}} \rangle \right| \cdot \left| \langle \phi_{t_{\cc, 1}}, \phi_{t_{\cc', 1}} \rangle\right| \\
&\lesssim \sum_{\cc \in \mathfrak{C}} \sum_{\substack{\cc' \in \mathfrak{C} \\ \omega_{\cc, 1} \subseteq \omega_{\cc', 1}}} 2^{n+1} \vert I_{\cc} \vert^{1/2} \cdot 2^{n+1} \vert I_{\cc'} \vert^{1/2} \left| \langle \phi_{t_{\cc, 1}}, \phi_{t_{\cc', 1}} \rangle \right|.
\end{align*}
The last inequality is a consequence of the energy definition, since any tri-tile in $\mathfrak{C}$ has the property that $\ds \left| \langle f, \phi_{s, 1} \rangle \right| \leq 2^{n+1} \vert I_s \vert^{1/2}$.
We employ again the fast decay of the wave packets: since $\omega_{\cc, 1} \subseteq \omega_{\cc', 1}$, we have $\vert I_{\cc'} \vert \leq \vert I_{\cc} \vert$ and 
\[
\left| \langle \phi_{t_{\cc, 1}}, \phi_{t_{\cc', 1}} \rangle \right| \lesssim \left(\frac{\vert I_{\cc'}\vert}{\vert I_{\cc} \vert}\right)^{1/2} \left(  1 +\frac{\dist(I_{\cc}, I_{\cc'})}{\vert I_c \vert} \right)^{-100}.
\]
Hence, we have
\begin{align*}
&\left\| \sum_{\cc \in \mathfrak{C}} \langle f, \phi_{t_{\cc, 1}}  \rangle \phi_{t_{\cc, 1}}  \right\|_2^2  \lesssim \sum_{\cc \in \mathfrak{C}} 2^{2n} \sum_{\substack{\cc' \in \mathfrak{C} \\ \omega_{\cc, 1} \subseteq \omega_{\cc', 1}}} \int_{I_{\cc'}} \left(1 +\frac{\dist( x, I_{\cc})}{\vert I_c \vert}  \right)^{-100} dx \\
&\lesssim \sum_{\cc \in \mathfrak{C}} 2^{2n} \int_{\rr{R}} \left(1 +\frac{\dist( x, I_{\cc})}{\vert I_c \vert}  \right)^{-100} dx  \lesssim \sum_{\cc \in \mathfrak{C}} 2^{2n} \vert I_{\cc} \vert.
\end{align*}
Whenever we have a subcollection $\ds \lbrace \cc' \in \mathfrak{C}:  \omega_{\cc, 1} \subseteq \omega_{\cc', 1}\rbrace$, the spatial intervals $ I_{\cc'}$ are mutually disjoint. This is implied by the pairwise disjointness of the tiles $\ds \lbrace I_{\cc} \times \omega_{\cc,1} \rbrace$. The last inequality completes the energy estimate.

We note that the disjointness of the tiles $\ds \lbrace I_{\cc} \times \omega_{\cc, 1} \rbrace_{\cc \in \mathfrak{C}}$ is not sufficient for concluding 
\begin{equation}
\label{eq:bessel}
\sum_{\cc \in \mathfrak{C}} \left| \langle f, \phi_{t_{\cc, 1}} \rangle \right|^2 \lesssim \|f\|_2^2.
\end{equation}
In fact, a counterexample is presented in \cite{wave_packet}. However, besides the mutually disjointness of the tiles in the above collection, we also use the condition on the tops of the columns:
$$2^{n} \leq \frac{\left| \langle f, \phi_{t_{\cc, 1}}  \rangle\right|}{\vert I_{\cc} \vert^{1/2}}\leq 2^{n+1},$$
in order to deduce $\ds \left\| \sum_{\cc \in \mathfrak{C}} \langle f, \phi_{t_{\cc, 1}}  \rangle \phi_{t_{\cc, 1}}  \right\|_2 \leq \left( 2^{2n} \sum_{\cc \in \mathfrak{C}} \vert I_{\cc} \vert \right)^{1/2}$, which in turn implies inequality \eqref{eq:bessel}.
\end{proof}
\end{proposition}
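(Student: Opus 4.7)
The plan is to carry out a Bessel-type argument adapted to the column structure, following the classical wave packet strategy (see e.g.\ the presentation in \cite{wave_packet}). Fix an $n$ and a maximizing collection $\mathfrak{C}$ of mutually disjoint columns, so that every top $t_\cc$ satisfies $2^n \leq |I_\cc|^{-1/2}|\langle f,\phi_{t_{\cc,1}}\rangle| \leq 2^{n+1}$. Then by the lower bound on the tops,
\[
(\eenergy_{\rr{S}}(f))^2 = 2^{2n}\sum_{\cc\in\mathfrak{C}}|I_\cc| \;\leq\; \sum_{\cc\in\mathfrak{C}}|\langle f,\phi_{t_{\cc,1}}\rangle|^2 \;=\; \Big\langle f,\sum_{\cc\in\mathfrak{C}}\langle f,\phi_{t_{\cc,1}}\rangle\phi_{t_{\cc,1}}\Big\rangle,
\]
so by Cauchy--Schwarz it suffices to bound $\big\|\sum_{\cc}\langle f,\phi_{t_{\cc,1}}\rangle\phi_{t_{\cc,1}}\big\|_2 \lesssim (2^{2n}\sum_\cc|I_\cc|)^{1/2}$.

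Expanding the $L^2$ norm squared gives a double sum $\sum_{\cc,\cc'}\langle f,\phi_{t_{\cc,1}}\rangle\overline{\langle f,\phi_{t_{\cc',1}}\rangle}\langle \phi_{t_{\cc,1}},\phi_{t_{\cc',1}}\rangle$. The Fourier support of the wave packets forces $\langle\phi_{t_{\cc,1}},\phi_{t_{\cc',1}}\rangle=0$ unless $\omega_{\cc,1}\cap\omega_{\cc',1}\neq\emptyset$; by dyadicity, one of the two frequency intervals must contain the other. By symmetry we restrict to $\omega_{\cc,1}\subseteq\omega_{\cc',1}$ (hence $|I_{\cc'}|\leq|I_\cc|$), using the upper bound $|\langle f,\phi_{t_{\cc,1}}\rangle|\leq 2^{n+1}|I_\cc|^{1/2}$ and the sharp wave packet decay estimate
\[
|\langle \phi_{t_{\cc,1}},\phi_{t_{\cc',1}}\rangle|\;\lesssim\; \Big(\frac{|I_{\cc'}|}{|I_\cc|}\Big)^{1/2}\Big(1+\frac{\dist(I_\cc,I_{\cc'})}{|I_\cc|}\Big)^{-100}.
\]

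The main point, and the only step where the \emph{mutual disjointness} of the top tiles is used in an essential way, is the observation that for a fixed $\cc\in\mathfrak{C}$, the spatial intervals $\{I_{\cc'}:\cc'\in\mathfrak{C},\ \omega_{\cc,1}\subseteq\omega_{\cc',1}\}$ are pairwise disjoint (if two of them met, the corresponding top tiles would overlap, contradicting mutual disjointness). This is exactly the substitute for a Bessel inequality, which may fail for arbitrary tile collections. Consequently the inner sum over such $\cc'$ of the decay factor integrates to at most $\int_{\rr{R}}(1+\dist(x,I_\cc)/|I_\cc|)^{-100}\,dx\lesssim|I_\cc|$, yielding $\sum_{\cc,\cc'}\lesssim 2^{2n}\sum_\cc|I_\cc|$ and completing the estimate.

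I expect the main subtlety, rather than a true obstacle, to be precisely this last disjointness argument: it is crucial to verify that the column/row definitions and the requirement that $\mathfrak{C}$ consist of mutually disjoint columns (in the sense of Definition \ref{def:mutually-disj}) give exactly the structural property needed to replace Bessel's inequality. The estimate for $\eenergy_{\rr{S}}(g)$ is identical after swapping columns for rows and exchanging the roles of the first and second frequency coordinates.
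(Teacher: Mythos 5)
Your proposal follows the same $TT^*$-type argument as the paper: reduce to bounding $\bigl\|\sum_{\cc}\langle f,\phi_{t_{\cc,1}}\rangle\phi_{t_{\cc,1}}\bigr\|_2$, expand the square, restrict by symmetry to $\omega_{\cc,1}\subseteq\omega_{\cc',1}$, invoke the wave-packet almost-orthogonality decay, and use disjointness of the top tiles $\{I_\cc\times\omega_{\cc,1}\}$ to make the inner spatial sum summable. This is correct and matches the paper's proof essentially step for step, including the identification of the disjointness of tops as the crucial substitute for Bessel's inequality.
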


Now we present the energy and size estimates for the third function:
\begin{proposition}
\label{prop:new_size_h} 
For any sequence of functions $\ds h=\lbrace h_\omega \rbrace_{\omega \in \Omega}$, we have 
\begin{equation}
\label{eq:size_est_h}
\ssize_{\rr{S}}\left( h \right) \lesssim \sup_{t \in \rr{S}} \left( \frac{1}{\vert I_t \vert}  \int_{\rr{R}} \lft \left( \sum_{\omega \in \Omega} \vert h_\omega(x) \vert^{r'}   \right)^{1/{r'}} \cdot \ci_{I_t}^M(x)  \rg^{r'} dx \right)^{1/{r'}},
\end{equation}
where $M>0$ can be chosen to be arbitrarily large, and with the implicit constant depending on $M$.
\begin{proof}
We will prove that, for any interval $I_t$, we have 
\begin{equation}
\label{eq:local_size_h}
\frac{1}{\vert I_t \vert} \int_{\rr{R}} \sum_{\omega \in \Omega} \lft \mathcal{M} \left( h_\omega \cdot \ci_{I_t}^{M}  \right)  \rg^{r'} \cdot \one_{I_t} dx \lesssim \frac{1}{\vert I_t \vert}  \int_{\rr{R}} \lft \left( \sum_{\omega \in \Omega} \vert h_\omega(x) \vert^{r'}   \right)^{1/{r'}} \cdot \ci_{I_t}^M(x)  \rg^{r'} dx.
\end{equation}
This will immediately imply \eqref{eq:size_est_h}. However, in the definition of $\ds \ssize_{\rr{S}}\left( h \right)$, we prefer to have the characteristic function $\one_{I_t}(x)$ appearing, as it makes the \emph{energy estimate} in Proposition \ref{prop:en_est_h} simpler. 

In order to prove \eqref{eq:local_size_h}, we note that
\begin{align*}
& \int_{\rr{R}} \sum_{\omega \in \Omega} \vert \mathcal{M} \left( h_\omega \cdot \ci_{I_t}^{M}  \right)  \vert^{r'} \cdot \one_{I_t} dx  \leq \sum_{\omega \in \Omega} \Big \| \mathcal{M} \left( h_\omega \cdot \ci_{I_t}^{M} \right)   \Big \|^{r'}_{r'} \\
&\lesssim \sum_{\omega \in \Omega} \Big \|  h_\omega \cdot \ci_{I_t}^{M}  \Big \|^{r'}_{r'} =\int_{\rr{R}}  \left( \sum_{\omega \in \Omega} \vert h_\omega(x) \vert^{r'} \right) \cdot \ci_{I_t}^{Mr'}dx.
\end{align*} 
Here we use the $L^{r'} \mapsto L^{r'}$ boundedness of the maximal function, so we must have $r<\infty$ and $r'>1$. The case $r=\infty$ is much easier to deal with, and it has already beed presented in Section \ref{sec:intro}.
\end{proof}
\end{proposition}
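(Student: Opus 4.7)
The strategy is to bound each local quantity appearing in the supremum that defines $\ssize_{\rr{S}}(h)$ by invoking the $L^{r'}$ boundedness of the Hardy--Littlewood maximal function coordinate-by-coordinate in $\omega$. First I would fix a column or row $\mathcal{T}$ with top $t$ and enlarge the local expression: since the integrand is nonnegative and $\Omega(\mathcal{T}) \subseteq \Omega$, I can drop the indicator $\one_{I_t}$ and replace $\Omega(\mathcal{T})$ by $\Omega$ without loss, so it suffices to control
\[
\frac{1}{|I_t|} \int_{\rr{R}} \sum_{\omega \in \Omega} \lft \mathcal{M}(h_\omega \cdot \ci_{I_t}^M)(x) \rg^{r'} dx.
\]

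Next I would interchange the sum and integral by Tonelli and apply the scalar $L^{r'}\to L^{r'}$ inequality of the maximal function term by term. This is the only analytic ingredient in the proof and is precisely where the assumption $r<\infty$ (equivalently $r'>1$) enters; the endpoint $r=\infty$ is handled separately by the pointwise bound on $\ci_{I_t}^M$, as already mentioned in Section~\ref{sec:intro}. The result is dominated by a constant multiple of $\frac{1}{|I_t|}\sum_{\omega}\|h_\omega\cdot \ci_{I_t}^M\|_{r'}^{r'}$. Pushing the sum back inside the integral and writing $\sum_\omega|h_\omega|^{r'} = \bigl((\sum_\omega|h_\omega|^{r'})^{1/r'}\bigr)^{r'}$ produces exactly the right-hand side of the claim after taking $r'$-th roots and a supremum over $t\in\rr{S}$.

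There is no real obstacle; this is essentially a packaging lemma recording that the maximal function is harmless in the definition of the size. The only subtlety worth flagging is stylistic rather than technical: the size is phrased with $\one_{I_t}$ and the restricted sum over $\Omega(\mathcal{T})$ rather than in the cleaner global form of the right-hand side because this localized formulation is what makes the subsequent \emph{energy estimate} (Proposition~\ref{prop:en_est_h}) tractable, by letting one exploit the spatial disjointness of the top intervals of mutually disjoint columns and rows.
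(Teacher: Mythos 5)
Your proposal is correct and follows essentially the same route as the paper: drop the indicator $\one_{I_t}$ and enlarge $\Omega(\mathcal{T})$ to $\Omega$ by positivity, then apply the scalar $L^{r'}\to L^{r'}$ bound for the maximal function $\omega$-by-$\omega$ (using $r'>1$), and repackage. The only difference is cosmetic --- you make the enlargement from $\Omega(\mathcal{T})$ to $\Omega$ explicit, whereas the paper states the intermediate inequality already with the full sum over $\Omega$.
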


\begin{proposition}
\label{prop:en_est_h}
For any collection of tiles $\rr{S}$ and $h\in L^{r'}(\ell^{r'}(\Omega))$, we have
\[
\eenergy_{\rr{S}}\left(h \right) \lesssim \Big \| \left( \sum_{\omega \in \Omega} \vert h_\omega   \vert^{r'} \right)^{1/{r'}} \Big \|_{r'}.
\]
\begin{proof}
Let $n$ and $\mathfrak{T}$ be maximizers in \eqref{eq:def:en_h}, and for simplicity assume that $\mathfrak{T}$ is a collection of mutually disjoint columns. Then we have
\begin{align*}
&\left( \eenergy_{\rr{S}} \left( h\right) \right)^{r'} \lesssim \sum_{\mathcal{T} \in \mathfrak{T}} \int_{\rr{R}} \sum_{\omega \in \Omega(\mathcal{T})} \lft \mathcal{M}\left( h_\omega \cdot \ci_{I_{\mathcal{T}}}^M \right)(x)  \rg^{r'} \cdot \one_{I_{\mathcal{T}}}dx \\
&=\int_{\rr{R}} \sum_{\omega \in \Omega} \lft\mathcal{M}\left( h_\omega \right)(x)  \rg^{r'} \cdot \left( \sum_{\mathcal{T},\ \omega \in \Omega \left(\mathcal{T} \right)}  \one_{I_{\mathcal{T}}}\right) dx. 
\end{align*}
Here we used the inequality $\ds \mathcal{M}\left( f  \cdot \ci_I \right) \lesssim \mathcal{M}\left( f\right)$. We employ now the disjointness of the columns: if $\omega \in \Omega(\mathcal{T}_1)$ and $\omega \in \Omega(\mathcal{T}_2)$, then the tops must be disjoint in space and hence
\[
\sum_{\mathcal{T},\ \omega \in \Omega \left(\mathcal{T} \right)}  \one_{I_{\mathcal{T}}}(x) \leq 1 \enskip \text{for a. e } x.
\]
We have 
\begin{align*}
&\left( \eenergy_{\rr{S}} \left( h\right) \right)^{r'} \lesssim \int_{\rr{R}} \sum_{\omega \in \Omega} \lft\mathcal{M}\left( h_\omega \right)(x)  \rg^{r'} dx =\sum_{\omega \in \Omega} \big \| \mathcal{M}\left( h_\omega \right) \big \|_{r'}^{r'} \\
& \lesssim \sum_{\omega \in \Omega} \big \|  h_\omega \big \|_{r'}^{r'} =\Big \|  \left(  \sum_{\omega \in \Omega} \vert h_\omega \vert^{r'} \right)^{1/{r'}} \Big \|_{r'}^{r'},
\end{align*}
after making use of the $L^{r'}$-boundedness of the maximal operator.
\end{proof}
\end{proposition}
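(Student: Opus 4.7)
The plan is to dualize through the $L^{r'}$ size condition on each top, sum over the family $\mathfrak{T}$, and then exploit the disjointness of tops to reduce everything to a single application of the maximal inequality in $L^{r'}$. Concretely, I would begin by fixing an energy maximizer: some $n \in \rr{Z}$ and a family $\mathfrak{T}$ of mutually disjoint rows and columns, each of whose top $t_{\mathcal{T}}=I_{\mathcal{T}} \times \omega_{\mathcal{T}}$ satisfies \eqref{eq:size_h:constr}. Raising that inequality to the power $r'$ and multiplying by $|I_{\mathcal{T}}|$ gives the pointwise replacement
\[
2^{n r'}|I_{\mathcal{T}}| \;\leq\; \int_{\rr{R}}\sum_{\omega \in \Omega(\mathcal{T})} \bigl|\mathcal{M}(h_\omega \cdot \ci_{I_{\mathcal{T}}}^{M})\bigr|^{r'}\cdot \one_{I_{\mathcal{T}}}\,dx.
\]

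Summing over $\mathcal{T} \in \mathfrak{T}$ yields $(\eenergy_{\rr{S}}(h))^{r'} \lesssim \sum_{\mathcal{T}} \int \sum_{\omega \in \Omega(\mathcal{T})} |\mathcal{M}(h_\omega \cdot \ci_{I_{\mathcal{T}}}^{M})|^{r'} \one_{I_{\mathcal{T}}}\,dx$. I would then interchange the order of summation, and dominate the localized maximal function by the global one: $\mathcal{M}(h_\omega \cdot \ci_{I_{\mathcal{T}}}^M)(x) \lesssim \mathcal{M}(h_\omega)(x)$. The crucial step is then the combinatorial overlap bound
\[
\sum_{\mathcal{T}:\ \omega \in \Omega(\mathcal{T})} \one_{I_{\mathcal{T}}}(x) \;\lesssim\; 1 \qquad \text{for a.e.\ } x.
\]
For a collection $\mathfrak{C}$ of mutually disjoint columns this is immediate: if $\omega$ belongs to $\Omega(\mathcal{C}_1) \cap \Omega(\mathcal{C}_2)$, then $\omega_{\mathcal{C}_j,1} \subseteq \omega_1$ for $j=1,2$, so the two top frequency intervals $\omega_{\mathcal{C}_j,1}$ are nested, and disjointness of the tiles $I_{\mathcal{C}_j}\times\omega_{\mathcal{C}_j,1}$ (Definition \ref{def:mutually-disj}) forces the spatial tops $I_{\mathcal{C}_1}, I_{\mathcal{C}_2}$ to be disjoint. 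The same argument works for rows, and splitting $\mathfrak{T}$ into its column and row parts gives an overlap of at most $2$ in the mixed case.

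With that overlap bound in hand, Fubini delivers
\[
(\eenergy_{\rr{S}}(h))^{r'} \;\lesssim\; \int_{\rr{R}} \sum_{\omega \in \Omega} |\mathcal{M}(h_\omega)(x)|^{r'}\,dx \;=\; \sum_{\omega \in \Omega} \|\mathcal{M}(h_\omega)\|_{r'}^{r'},
\]
and since $r'>1$ (recall the hypothesis $r<\infty$), the $L^{r'}\to L^{r'}$ boundedness of the Hardy--Littlewood maximal function yields $\sum_\omega \|\mathcal{M}(h_\omega)\|_{r'}^{r'} \lesssim \sum_\omega \|h_\omega\|_{r'}^{r'} = \bigl\|(\sum_\omega |h_\omega|^{r'})^{1/r'}\bigr\|_{r'}^{r'}$, which is exactly the claimed bound.

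The main obstacle is the overlap estimate $\sum_{\mathcal{T}} \one_{I_{\mathcal{T}}} \lesssim 1$: the assumption \emph{disjointness of the tri-tile sets} $\mathcal{T}_i \cap \mathcal{T}_j = \emptyset$ would not be enough on its own, and one must really invoke the stronger notion in Definition \ref{def:mutually-disj} that the top tiles themselves form a disjoint family in the time-frequency plane. Once this is in place, everything else is a routine combination of the maximal inequality and Fubini.
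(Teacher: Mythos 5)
Your proof is essentially the same as the paper's: fix an energy maximizer, sum the $r'$-th power of the size condition over the family, dominate $\mathcal{M}(h_\omega \cdot \ci_{I_{\mathcal{T}}}^M)$ by $\mathcal{M}(h_\omega)$, use the disjointness of top tiles to get the overlap bound $\sum_{\mathcal{T}} \one_{I_{\mathcal{T}}} \lesssim 1$, and finish with the $L^{r'}$-boundedness of the maximal operator. You are in fact slightly more thorough than the paper in two places: you spell out why the disjointness of the tiles $I_{\mathcal{T}} \times \omega_{\mathcal{T},1}$ (rather than mere disjointness of the tri-tile sets) is what forces the spatial tops to be disjoint, and you explicitly note that when $\mathfrak{T}$ mixes rows and columns the overlap is at most $2$, whereas the paper simply says ``for simplicity assume that $\mathfrak{T}$ is a collection of mutually disjoint columns.''
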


\subsection{Decomposition Lemmas and Summation of Columns/Rows}
\label{sec:decomposition}
~\\
Throughout this section, we fix the collection of tiles $\rr{S}$ and we will use the notation
\begin{align*}
\label{eq:def_global_sz_en}
&S_1:=\ssize_{\rr{S}}\left( f \right), \enskip E_1:=\eenergy_{\rr{S}}\left( f \right), \enskip S_2:=\ssize_{\rr{S}}\left( g \right), \enskip E_2:=\eenergy_{\rr{S}}\left( g \right),\\
& S_3:=\ssize_{\rr{S}}\left( h \right), \enskip E_3:=\eenergy_{\rr{S}}\left( h\right)
\end{align*}
for the ``global" sizes and energies. Using stopping times, we can partition $\rr{S}$ into smaller subcollections, on each of which we have better control on the ``local" sizes end energies.

\begin{lemma}
\label{dec_lemma_f}
Let $\ds \rr{S}' \subseteq \rr{S}$ be a subcollection of $\rr{S}$ and assume that $\ds \ssize_{\rr{S}'}\left( f\right) \leq 2^{-n_0} E_1$. Then one can partition $\ds \rr{S}'=\rr{S}'' \cup \rr{S}'''$, where 
\begin{equation}
\label{eq:dec_lemma_f_sz}
\ssize_{\rr{S}''} \left( f \right) \leq 2^{-n_0-1} E_1
\end{equation}
and $\rr{S}'''$ can be written as a union of mutually disjoint columns (in the sense of Definition \ref{def:mutually-disj}) $\ds \rr{S}'''=\bigcup_{\cc \in \mathfrak{C}} \cc$, the tops of which satisfy
\begin{equation}
\label{eq:dec_lemma_f}
\sum_{\cc \in \mathfrak{C}} \vert I_{\cc} \vert \lesssim 2^{2n_0}.
\end{equation}
\begin{proof}
We begin the decomposition algorithm by looking for tiles $s \in \rr{S}'$ which satisfy
\begin{equation}
\label{eq:dec_lemma_1}
\frac{\vert \langle f, \phi_{s_1} \rangle \vert}{\vert I_s \vert^{1/2}} > 2^{-n_0-1} E_1.
\end{equation}
If there are no such tiles, then $\ssize_{\rr{S}'} \leq 2^{-n_0-1} E_1$ and we set $\ds \rr{S}''=\rr{S}'$, $\ds \rr{S}'''=\emptyset$. 

Otherwise, start with $\rr{S}'''=\emptyset$. Among the tiles in $\ds \rr{S}'$ satisfying \eqref{eq:dec_lemma_1}, choose $s$ which has the largest spacial interval $I_s$(and hence the smallest frequency interval $\omega_1$), and so that both $I_s$ and $\omega_{s_1}$ are situated leftmost. Then construct the column
\[
\cc_1:=\lbrace t \in \rr{S}': I_t \subseteq I_s, \enskip \text{and } \enskip \omega_{t_1} \subseteq \omega_{s_1}\rbrace.
\] 
Now set $\ds \rr{S}''':=\rr{S}''' \cup \cc_1$, $\rr{S}'=\rr{S}' \setminus \rr{S}'''$, and restart the algorithm.

At the end, we will have a collection of columns $\cc_1, \ldots, \cc_N$ which constitute $\rr{S}'''$, and $\rr{S}''$, in which none of the tiles satisfies \eqref{eq:dec_lemma_1}. The columns are disjoint by construction, so we are left with proving the inequality \eqref{eq:dec_lemma_f}, which follows directly from the energy definition. For the columns $\cc_1, \ldots, \cc_N$, we know that their tops $s_1, \ldots, s_N$ satisfy
\[
 \frac{\vert \langle f, \phi_{s_j, 1}  \rangle \vert}{\vert I_s \vert^{1/2}} \geq 2^{-n_0-1}E_1, \enskip \text{for all  } 1\leq j \leq N. 
\]
Hence $\ds \left( E_1 2^{- n_0-1} \right)^2 \sum_{j} \vert I_{s_j} \vert \leq \left( \eenergy_{\rr{S}'} \left(f \right) \right)^2 \leq \left( \eenergy_{\rr{S}} \left(f\right) \right)^2=E_1^2$.
\end{proof}
\end{lemma}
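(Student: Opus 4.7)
The plan is a greedy stopping-time selection. I call $s \in \rr{S}'$ \emph{critical} if $\vert \langle f, \phi_{s_1}\rangle \vert/\vert I_s\vert^{1/2} > 2^{-n_0-1} E_1$. If no critical tile exists in $\rr{S}'$, I set $\rr{S}'' = \rr{S}'$, $\rr{S}''' = \emptyset$, and \eqref{eq:dec_lemma_f_sz} holds trivially. Otherwise I pick a critical tile $s^*$ maximizing $\vert I_{s^*}\vert$, breaking ties by taking the leftmost $I_{s^*}$ and then the leftmost $\omega_{s^*,1}$, and form the column
\[
\cc := \{t \in \rr{S}' : I_t \subseteq I_{s^*},\ \omega_{t,1} \supseteq \omega_{s^*,1}\}.
\]
I add $\cc$ to a family $\mathfrak{C}$, remove its tiles from $\rr{S}'$, and iterate. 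Finiteness of $\rr{S}$ gives termination; setting $\rr{S}''' := \bigcup_{\cc \in \mathfrak{C}} \cc$ and $\rr{S}'' := \rr{S}' \setminus \rr{S}'''$ ensures that no tile of $\rr{S}''$ is critical, which is \eqref{eq:dec_lemma_f_sz}.

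Next I verify that $\mathfrak{C}$ consists of mutually disjoint columns in the sense of Definition \ref{def:mutually-disj}. The $\cc_i$ are disjoint as tri-tile sets by construction, so the content is pairwise disjointness of the first-tiles $I_{\cc} \times \omega_{\cc,1}$ of the tops. Suppose $\cc_i$ is selected strictly before $\cc_j$, with tops $s^*_i, s^*_j$. By the greedy rule $\vert I_{s^*_i}\vert \geq \vert I_{s^*_j}\vert$, hence $\vert \omega_{s^*_i,1}\vert \leq \vert \omega_{s^*_j,1}\vert$. If the two first-tiles overlapped, dyadic nesting would force $I_{s^*_j} \subseteq I_{s^*_i}$ and $\omega_{s^*_j,1} \supseteq \omega_{s^*_i,1}$, making $s^*_j \in \cc_i$, contradicting the availability of $s^*_j$ at step $j$. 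The lexicographic tie-breaking handles the borderline case $\vert I_{s^*_i}\vert = \vert I_{s^*_j}\vert$.

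For \eqref{eq:dec_lemma_f} I feed $\mathfrak{C}$ into Definition \ref{def:en_f}. Choose the integer $n$ with $2^n \leq 2^{-n_0-1} E_1 < 2^{n+1}$. The hypothesis $\ssize_{\rr{S}'}(f) \leq 2^{-n_0} E_1 < 2^{n+2}$ ensures every $s$ in a column satisfies the upper constraint (at the cost of a harmless constant), and the criticality of each top gives $\vert \langle f, \phi_{s^*_1}\rangle \vert/\vert I_{\cc}\vert^{1/2} \geq 2^n$. Combined with the mutual disjointness just established, $\mathfrak{C}$ is admissible in the supremum defining $\eenergy_{\rr{S}}(f) = E_1$, yielding
\[
2^{-n_0-1} E_1 \cdot \Bigl( \sum_{\cc \in \mathfrak{C}} \vert I_{\cc}\vert \Bigr)^{1/2} \lesssim E_1,
\]
which rearranges to \eqref{eq:dec_lemma_f}.

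The main obstacle is the mutual-disjointness step: the greedy selection by maximal spatial interval together with a consistent tie-breaking rule is precisely what prevents the first-tiles of tops of different columns from being dyadically nested, and this is what makes $\mathfrak{C}$ admissible in the energy definition. Once this is in place, the size improvement follows immediately from the stopping rule, and the estimate on $\sum_{\cc} \vert I_{\cc}\vert$ is a direct application of Definition \ref{def:en_f}.
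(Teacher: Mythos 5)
Your proof is correct and follows the same greedy stopping-time construction as the paper's: select critical tiles with maximal $|I_s|$, form maximal columns, and bound $\sum_{\cc}|I_{\cc}|$ by plugging the resulting collection of mutually disjoint columns into the energy definition. You make explicit two points the paper leaves implicit — the dyadic-nesting argument for pairwise disjointness of the top first-tiles (the paper only says ``disjoint by construction''), and the factor-of-two slack in matching the size hypothesis to the dyadic thresholds $2^n \le \cdot \le 2^{n+1}$ of Definition~\ref{def:en_f} — and you also write the column membership condition as $\omega_{t,1}\supseteq\omega_{s^*,1}$, which matches the actual definition of a column (the paper's proof has the inclusion reversed, an apparent typo).
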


A very similar result holds for $g$, with columns being replaced by rows:
\begin{lemma}
\label{dec_lemma_g}
Let $\ds \rr{S}' \subseteq \rr{S}$ be a subcollection of $\rr{S}$ and assume that $\ds \ssize_{\rr{S}'}\left( g\right) \leq 2^{-n_0} E_2$. Then one can partition $\ds \rr{S}'=\rr{S}'' \cup \rr{S}'''$, where 
\begin{equation}
\label{eq:dec_lemma_g_sz}
\ssize_{\rr{S}''} \left( g \right) \leq 2^{-n_0-1} E_2
\end{equation}
and $\rr{S}'''$ can be written as a union of mutually disjoint rows $\ds \rr{S}'''=\bigcup_{\ii{R} \in \mathfrak{R}} \ii{R}$, the tops of which satisfy
\begin{equation}
\label{eq:dec_lemma_g}
\sum_{\ii{R} \in \mathfrak{R}} \vert I_{\ii{R}} \vert \lesssim 2^{2n_0}.
\end{equation}
\end{lemma}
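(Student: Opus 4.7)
The plan is to mirror the stopping-time argument of Lemma \ref{dec_lemma_f}, with columns replaced by rows and the role of $\phi_{s_1}$ and $\omega_{s_1}$ played throughout by $\phi_{s_2}$ and $\omega_{s_2}$. Concretely, I would scan $\rr{S}'$ for tiles $s$ with
\[
\frac{\vert \langle g, \phi_{s_2} \rangle \vert}{\vert I_s \vert^{1/2}} > 2^{-n_0-1} E_2.
\]
If no such tile exists then $\rr{S}''=\rr{S}'$ and $\rr{S}'''=\emptyset$ already satisfies \eqref{eq:dec_lemma_g_sz}. Otherwise, I would pick among these tiles one with the largest spatial interval $I_s$ (equivalently, smallest $\omega_{s_2}$), breaking ties by leftmost position of both $I_s$ and $\omega_{s_2}$, declare it the top of a row
\[
\ii{R}_1:=\lbrace u \in \rr{S}': I_u \subseteq I_s,\ \omega_{u_2} \subseteq \omega_{s_2} \rbrace,
\]
add $\ii{R}_1$ to $\rr{S}'''$, remove its tiles from $\rr{S}'$, and iterate until no tile satisfies the size condition. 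What remains is $\rr{S}''$, and by construction $\ssize_{\rr{S}''}(g) \leq 2^{-n_0-1} E_2$.

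The main point to verify is that the rows $\ii{R}_1,\ldots,\ii{R}_N$ so produced are mutually disjoint in the sense of Definition \ref{def:mutually-disj}; that is, the top tiles $\lbrace I_{\ii{R}_j} \times \omega_{\ii{R}_j,2}\rbrace$ are pairwise disjoint as rectangles in phase space. This is exactly where the selection rule is needed: at each step the extracted top has the largest available $I_s$ and hence the smallest available $\omega_{s_2}$, so any later top whose second-frequency interval meets $\omega_{\ii{R}_j, 2}$ must have $\omega_{s_2}$ strictly containing $\omega_{\ii{R}_j, 2}$, and the leftmost tie-break then rules out a spatial overlap between the two tops. This is the only piece of the argument that really uses the geometry, and it is the direct transposition of the column case.

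Finally, \eqref{eq:dec_lemma_g} would follow from the definition of $\eenergy_{\rr{S}}(g)$ in \eqref{eq:def:en_g}. Each extracted top $t_{\ii{R}_j}$ satisfies
\[
\frac{\vert \langle g, \phi_{t_{\ii{R}_j,2}} \rangle \vert}{\vert I_{\ii{R}_j} \vert^{1/2}} \geq 2^{-n_0-1} E_2,
\]
while every tile in any of these rows inherits the upper bound $\leq 2^{-n_0} E_2 = 2 \cdot 2^{-n_0-1} E_2$ from $\ssize_{\rr{S}'}(g) \leq 2^{-n_0} E_2$, which matches the two-sided thresholds required in the energy definition. Hence $\mathfrak{R}=\lbrace \ii{R}_1,\ldots,\ii{R}_N\rbrace$ is an admissible family at level $n$ with $2^n = 2^{-n_0-1} E_2$, and
\[
\bigl(2^{-n_0-1} E_2\bigr)^{2}\sum_{\ii{R} \in \mathfrak{R}} \vert I_{\ii{R}} \vert \ \leq \ \bigl(\eenergy_{\rr{S}}(g)\bigr)^{2} \ = \ E_2^{2},
\]
which gives $\sum_{\ii{R}} \vert I_{\ii{R}} \vert \lesssim 2^{2n_0}$. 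The only step that really requires attention is the mutual-disjointness check above; beyond that, the proof is a word-by-word transcription of the argument for $f$.
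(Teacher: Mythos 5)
Your proposal takes exactly the route the paper intends: Lemma~\ref{dec_lemma_g} is stated without its own proof precisely because it is the verbatim transposition of Lemma~\ref{dec_lemma_f} (columns replaced by rows, with $\phi_{s_1}, \omega_{s_1}$ replaced by $\phi_{s_2}, \omega_{s_2}$), and your stopping-time, greedy-selection, and energy-counting steps reproduce that argument faithfully.

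One detail of your disjointness verification is misattributed, though. You claim that when a later top $s$ has $\omega_{s_2}$ strictly containing $\omega_{\ii{R}_j,2}$, ``the leftmost tie-break then rules out a spatial overlap between the two tops.'' The tie-break discriminates only among candidates of the \emph{same} scale; once $\omega_{s_2} \supsetneq \omega_{\ii{R}_j,2}$, we have $\vert I_s\vert < \vert I_{\ii{R}_j}\vert$ and there is no tie to break. The actual mechanism is removal: if moreover $I_s \cap I_{\ii{R}_j} \neq \emptyset$, then dyadic nesting forces $I_s \subseteq I_{\ii{R}_j}$, and together with $\omega_{\ii{R}_j,2} \subseteq \omega_{s_2}$ this places $s$ inside $\ii{R}_j$ by the very definition of a row, so $s$ was deleted from $\rr{S}'$ at the earlier step and cannot become a later top. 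The same applies in the equal-scale case (overlapping dyadic tiles of equal dimensions coincide, so again $s \in \ii{R}_j$). The leftmost convention serves only to make the greedy choice well-defined, not to enforce disjointness. With that correction, your argument is complete and matches the paper's.
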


We have seen already that the size of $h$ depends both on columns and rows. This behavior will also be displayed in the decomposition lemma for $h$:
\begin{lemma}
\label{dec_lemma_h}
Let $\ds \rr{S}' \subseteq \rr{S}$ be a subcollection of $\rr{S}$ and assume that $\ds \ssize_{\rr{S}'}\left( h\right) \leq 2^{-n_0} E_3$. Then one can partition $\ds \rr{S}'=\rr{S}'' \cup \rr{S}'''$, where 
\begin{equation}
\label{eq:dec_lemma_h_sz}
\ssize_{\rr{S}''} \left( h \right) \leq 2^{-n_0-1} E_3
\end{equation}
and $\rr{S}'''$ can be written as the union of $\mathfrak{C}$, a collection of mutually disjoint columns, and $\mathfrak{R}$, a collection of mutually disjoint rows: $\ds \rr{S}'''=\bigcup_{\cc \in \mathfrak{C}} \cc \cup \bigcup_{\ii{R} \in \mathfrak{R}} \ii{R}$. Moreover, we have
\begin{equation}
\label{eq:dec_lemma_h}
\sum_{\cc \in \mathfrak{C}} \vert I_{\cc} \vert \lesssim 2^{r' n_0} \enskip \text{and  }\enskip \sum_{\ii{R} \in \mathfrak{R}} \vert I_{\ii{R}} \vert \lesssim 2^{r' n_0}.
\end{equation}
\begin{proof}
The proof will be similar to that of Lemma \ref{dec_lemma_f}. 
We initialize $\rr{S}''=\rr{S}'''=\emptyset$, and we begin by looking for ``extremizers" for $\ssize_{\rr{S}'} \left( h \right)$. That is, we  look for columns $\cc \subseteq \rr{S}'$ satisfying
\begin{equation}
\label{eq:dec_lemma_h_col}
\left( \frac{1}{\vert I_{\cc} \vert} \int_{\rr{R}} \sum_{\omega \in \Omega(\cc)} \Big \vert \mathcal{M}\left( h_\omega \cdot \ci_{I_{\cc}}^{M}  \right) \Big \vert^{r'} \cdot \one_{I_{\cc}} dx \right)^{1/{r'}} >2^{-n_0-1} E_3.
\end{equation}
If there are no such columns, we search for rows $\ii{R}\subseteq \rr{S}'$ which satisfy
\begin{equation}
\label{eq:dec_lemma_h_row}
\left( \frac{1}{\vert I_{\ii{R}} \vert} \int_{\rr{R}} \sum_{\omega \in \Omega(\ii{R})} \Big \vert \mathcal{M}\left( h_\omega \cdot \ci_{I_{\ii{R}}}^{M}  \right) \Big \vert^{r'} \cdot \one_{I_{\ii{R}}} dx \right)^{1/{r'}} >2^{-n_0-1} E_3.
\end{equation}
When there are no more columns or rows satisfying \eqref{eq:dec_lemma_h_col} or \eqref{eq:dec_lemma_h_row}, set $\rr{S}''=\rr{S}$, which will have $\ds \ssize_{\rr{S}''} \left( h\right) \leq 2^{-n_0-1}$.

Instead, if we have columns satisfying \eqref{eq:dec_lemma_h_col}, we select the ones which are maximal with respect to inclusion,  have the largest spatial top interval $I_s$, and among these, we choose the one whose frequency interval $\omega_{s, 1}$ and spatial interval $I_s$ are leftmost. Ultimately, we want to obtain a collection $\mathfrak{C}$ of disjoint columns. Let $\cc_{1}$ be such a column, and denote $s_1$ its top. Note that a tile $t$ satisfying $\omega_{t, 1} \subset \omega_{s_1, 1}$ and $I_{s_1} \subset I_t$  cannot be the top of a column satisfying \eqref{eq:dec_lemma_h_col}, for it should have been selected first. 

Then we set $\ds\rr{S}'''=\rr{S}''' \cup \cc_1$ and $\rr{S}':= \rr{S}'\setminus \cc_1$, and repeat the algorithm. That is, we search for columns in the updated $\rr{S}'$ satisfying \eqref{eq:dec_lemma_h_col}, obtaining eventually a collection $\mathfrak{C}=\cc_1\cup \ldots \cup\cc_N$ of mutually disjoint columns, with disjoint tops, satisfying \eqref{eq:dec_lemma_h_col}.
Following that, we repeat the same procedure, obtaining a collection $\mathfrak{R}= \ii{R}_1\cup \ldots \cup \ii{R}_{\tilde{N}}$ of rows satisfying \eqref{eq:dec_lemma_h_row}. We will have $\ds \rr{S}'''=\bigcup_{\cc \in \mathfrak{C}} \cc \cup \bigcup_{\ii{R} \in \mathfrak{R}} \ii{R}$. Also, $\rr{S}''$ consists of the tiles in $\rr{S}' \setminus \rr{S}''' $, and will have the property that $\ds \ssize_{\rr{S}''} \left( h\right) \leq 2^{-n_0-1}$.

Then \eqref{eq:dec_lemma_h} follows from Definition \ref{def:en_h}, similarly to the proof in Lemma \ref{dec_lemma_f}.
\end{proof}
\end{lemma}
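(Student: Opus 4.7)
The strategy will follow the stopping-time scheme already used for Lemmas \ref{dec_lemma_f} and \ref{dec_lemma_g}, but since $\ssize_{\rr{S}}(h)$ is a supremum over both columns \emph{and} rows, the selection has to be done in two passes, one for each type of structure.

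First, I would initialize $\rr{S}''' := \emptyset$ and $\mathfrak{C} := \emptyset$, and repeatedly search among the columns $\cc \subseteq \rr{S}'$ for one that witnesses the current size, i.e.\ one for which
\[
\left(\frac{1}{|I_\cc|}\int_{\rr{R}} \sum_{\omega \in \Omega(\cc)} \lft \mathcal{M}(h_\omega \cdot \ci_{I_\cc}^M) \rg^{r'} \one_{I_\cc} \, dx\right)^{1/r'} > 2^{-n_0-1} E_3.
\]
Among such columns I would pick one whose top tile $I_\cc \times \omega_{\cc,1}$ is maximal with respect to inclusion, breaking ties by choosing the largest and leftmost spatial interval, then the leftmost frequency interval (exactly as in the proof of Lemma \ref{dec_lemma_f}). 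I then move all tiles of that column to $\mathfrak{C}$ and $\rr{S}'''$, delete them from $\rr{S}'$, and restart. After this pass terminates, I would run the analogous procedure with rows in place of columns, producing $\mathfrak{R}$. Whatever is left becomes $\rr{S}''$.

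Second, by the stopping rule no column or row of $\rr{S}''$ witnesses the size above $2^{-n_0-1} E_3$, so $\ssize_{\rr{S}''}(h) \leq 2^{-n_0-1} E_3$, giving \eqref{eq:dec_lemma_h_sz}. The maximality and leftmost-first ordering in the selection guarantee that the top tiles $\{I_\cc \times \omega_{\cc,1}\}_{\cc \in \mathfrak{C}}$ are pairwise disjoint, matching Definition \ref{def:mutually-disj}, and similarly for $\mathfrak{R}$. Since each $\cc \in \mathfrak{C}$ satisfies the threshold appearing in Definition \ref{def:en_h} with $2^n \sim 2^{-n_0-1} E_3$, the definition of $\eenergy_{\rr{S}}(h)$ applied to the family $\mathfrak{C}$ yields
\[
\left(2^{-n_0-1} E_3\right)^{r'} \sum_{\cc \in \mathfrak{C}} |I_\cc| \lesssim \left(\eenergy_{\rr{S}}(h)\right)^{r'} = E_3^{r'},
\]
hence $\sum_{\cc \in \mathfrak{C}} |I_\cc| \lesssim 2^{r' n_0}$, and analogously for $\mathfrak{R}$, establishing \eqref{eq:dec_lemma_h}.

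The main obstacle is making sure that the greedy procedure indeed yields \emph{mutually disjoint} columns and rows in the sense of Definition \ref{def:mutually-disj}: two columns whose top tiles merely fail to coincide need not have their tile sets, nor even their top tiles, disjoint in the required strong sense. The proof of Lemma \ref{dec_lemma_f} resolves this by always picking a maximal candidate first; the same geometric argument should transfer here, since once a column with top $I_\cc \times \omega_{\cc,1}$ has been extracted, any later witness $\cc'$ cannot satisfy simultaneously $I_\cc \subsetneq I_{\cc'}$ and $\omega_{\cc',1} \subsetneq \omega_{\cc,1}$ (otherwise $\cc'$ would have been selected earlier), so the collection of top tiles ends up pairwise disjoint. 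A minor additional subtlety is that tiles removed during the column pass could have contributed to rows witnessing the size; this is harmless because we only need to produce a partition $\rr{S}' = \rr{S}'' \cup \rr{S}'''$ with the two bullet-pointed properties, and the second (row) pass only extracts from what remains.
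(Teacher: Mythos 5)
Your proposal is correct and follows essentially the same two-pass greedy stopping-time argument as the paper: extract maximal columns with largest, leftmost top spatial interval until no column exceeds the threshold, then do the same for rows, and read off the spatial-interval bound \eqref{eq:dec_lemma_h} from the energy in Definition \ref{def:en_h}. Even the disjointness discussion in your last paragraph mirrors the paper's remark that a tile $t$ with $\omega_{t,1}\subset\omega_{s_1,1}$ and $I_{s_1}\subset I_t$ would have been selected first.
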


Simultaneously applying the decomposition results above, and re-iterating until all tiles in $\rr{S}$ are exhausted, we obtain a splitting of $\rr{S}$ into collections of columns and rows.
\begin{proposition}
\label{lemma_splitting}
One can write $\rr{S}$ as $\ds \rr{S}=\bigcup_{n \in \rr{Z}} \rr{S}_n^1 \cup \rr{S}_n^2$, where each $\rr{S}_n^1$ is a union of disjoint columns, and each $\rr{S}_n^2$ is a union of disjoint rows, for which we have:
\begin{itemize}
\item[(a)]for $i\in\{1,2\}$ then $\ds \ssize_{\rr{S}_n^i}\left( f \right) \leq \min \left( 2^{-n} E_1 , S_1\right)$,
\item[(b)]for $i\in\{1,2\}$ then  $\ds \ssize_{\rr{S}_n^i} \left( g \right) \leq \min \left( 2^{-n} E_2 , S_2\right)$,
\item[(c)]for $i\in\{1,2\}$ then  $\ds \ssize_{\rr{S}_n^i} \left( h \right) \leq \min \left( 2^{-\frac{2n}{r'}} E_3 , S_3\right)$ and
\item[(d)]$\ds \sum_{\cc \in \rr{S}_n^1} \vert I_{\cc} \vert \lesssim 2^{2n} \enskip \text{and  }\enskip \sum_{\ii{R} \in \rr{S}_n^2} \vert I_{\ii{R}} \vert \lesssim 2^{2n}$.
\end{itemize}
Moreover, $\rr{S}_n^1$ is nonempty if and only if one of the following holds:
\[
2^{-n-1}E_1 \leq \ssize_{\rr{S}_n^1} \left( f \right) \leq \min \left( 2^{-n} E_1 , S_1\right), \enskip \text{or  }\enskip 2^{-\frac{2\left(n-1 \right)}{r'}}E_3 \leq \ssize_{\rr{S}_n^1} \left( h \right) \leq \min \left( 2^{-\frac{2n}{r'}} E_3 , S_3\right).
\]
Similarly,  $\rr{S}_n^2$ is nonempty if and only if
\[
2^{-n-1}E_2 \leq \ssize_{\rr{S}_n^2} \left( g \right) \leq \min \left( 2^{-n} E_2 , S_2\right), \enskip \text{or  }\enskip 2^{-\frac{2\left(n-1 \right)}{r'}}E_3 \leq \ssize_{\rr{S}_n^2} \left( h \right) \leq \min \left( 2^{-\frac{2n}{r'}} E_3 , S_3\right).
\]
\end{proposition}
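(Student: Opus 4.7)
The plan is to iterate the three stopping-time Lemmas \ref{dec_lemma_f}, \ref{dec_lemma_g}, \ref{dec_lemma_h} in a coordinated way, indexed by $n \in \rr{Z}$. The exponent $2/r'$ appearing in (c) is forced by a scaling match: Lemma \ref{dec_lemma_f} (and likewise Lemma \ref{dec_lemma_g}) applied at internal parameter $n_0 = n$ yields extracted columns with $\sum |I_\cc| \lesssim 2^{2n}$, while Lemma \ref{dec_lemma_h} at internal parameter $n_0$ only gives $\sum |I| \lesssim 2^{r' n_0}$. To make the uniform bound $2^{2n}$ in (d) come out from all three sources, the $h$-lemma must be run with $n_0 = 2n/r'$, and then its size improvement per stage is $2^{-2/r'}$, which is exactly the exponent displayed in (c).

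I would start the induction at $n$ so negative that $S_1 \le 2^{-n} E_1$, $S_2 \le 2^{-n} E_2$ and $S_3 \le 2^{-2n/r'} E_3$ all hold; such $n$ exists since the global sizes and energies are finite. Initialize the residual $\rr{S}^{(n)} := \rr{S}$, which trivially meets the stage-$n$ thresholds. Inductively, assuming $\rr{S}^{(n)} \subseteq \rr{S}$ satisfies $\ssize_{\rr{S}^{(n)}}(f) \le 2^{-n} E_1$, $\ssize_{\rr{S}^{(n)}}(g) \le 2^{-n} E_2$, and $\ssize_{\rr{S}^{(n)}}(h) \le 2^{-2n/r'} E_3$, I apply Lemma \ref{dec_lemma_f} with $n_0 = n$ to peel off a collection $\mathfrak{C}_n^{(f)}$ of mutually disjoint columns with $\sum |I_\cc| \lesssim 2^{2n}$, leaving a residual $f$-size $\le 2^{-n-1} E_1$; then Lemma \ref{dec_lemma_g} with $n_0 = n$ extracts mutually disjoint rows $\mathfrak{R}_n^{(g)}$ under the same bound and reduces the $g$-size to $\le 2^{-n-1} E_2$; and finally Lemma \ref{dec_lemma_h} with $n_0 = 2n/r'$ produces columns $\mathfrak{C}_n^{(h)}$ and rows $\mathfrak{R}_n^{(h)}$ (each family with sum $\lesssim 2^{r' \cdot 2n/r'} = 2^{2n}$), leaving $\rr{S}^{(n+1)}$ with $h$-size $\le 2^{-2(n+1)/r'} E_3$. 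Monotonicity of the sizes under passing to subcollections (immediate from the definitions) guarantees that the $f$- and $g$-size bounds are automatically inherited by $\rr{S}^{(n+1)}$, closing the step.

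I then set $\rr{S}_n^1 := \mathfrak{C}_n^{(f)} \cup \mathfrak{C}_n^{(h)}$ and $\rr{S}_n^2 := \mathfrak{R}_n^{(g)} \cup \mathfrak{R}_n^{(h)}$; the union $\bigcup_n (\rr{S}_n^1 \cup \rr{S}_n^2)$ exhausts $\rr{S}$ because the residual sizes vanish as $n \to +\infty$ while the thresholds exceed $S_1, S_2, S_3$ for $n \to -\infty$. Item (d) is additive across the two source-lemmas contributing to each $\rr{S}_n^i$, so it follows from the four individual bounds $\lesssim 2^{2n}$. Items (a)--(c) follow from $\rr{S}_n^i \subseteq \rr{S}^{(n)}$ together with the monotonicity of sizes. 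The ``nonempty iff'' characterizations are read off from which stopping time actually fired at stage $n$: the $f$-extraction produces a column precisely when some tile in $\rr{S}^{(n)}$ has $|\langle f, \phi_{s_1}\rangle|/|I_s|^{1/2} > 2^{-n-1} E_1$, equivalently when $\ssize_{\rr{S}_n^1}(f) \ge 2^{-n-1} E_1$; similarly the $h$-column extraction fires exactly when $\ssize_{\rr{S}_n^1}(h) \ge 2^{-2(n-1)/r'} E_3$, and the row case is symmetric.

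The only bookkeeping subtlety is that $\mathfrak{C}_n^{(f)}$ and $\mathfrak{C}_n^{(h)}$ are each individually mutually disjoint in the strong sense of Definition \ref{def:mutually-disj} (disjoint tile sets and pairwise disjoint top tiles), but columns drawn from these two families need not have pairwise disjoint tops across families. One therefore reads ``$\rr{S}_n^1$ is a union of disjoint columns'' as ``a union of two collections of mutually disjoint columns'', and analogously for $\rr{S}_n^2$. This is harmless: the bound (d) is additive over the two subfamilies, and the later applications of the proposition in Section \ref{sec:gen-est} only exploit this additive control. Beyond this minor point, the argument is a clean iteration once the three stopping-time lemmas are in hand, so I do not expect a serious obstacle.
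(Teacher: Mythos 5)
Your high-level plan — iterate the three decomposition lemmas in lockstep, with the $h$-lemma run at a rescaled parameter so that the extracted mass matches the $2^{2n}$ bound from the $f$- and $g$-lemmas — is exactly the argument the paper intends (the paper itself gives essentially no proof beyond ``re-iterating the lemmas''). However, there is a genuine quantitative gap in your induction step for $h$, and it stems from a misreading of what Lemma~\ref{dec_lemma_h} delivers.

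One application of Lemma~\ref{dec_lemma_h} at internal parameter $n_0$ improves the $h$-size from $2^{-n_0}E_3$ to $2^{-n_0-1}E_3$: the gain per pass is always a factor $2^{-1}$, \emph{independent} of $n_0$. So with $n_0 = 2n/r'$ you obtain a residual with $\ssize(h)\le 2^{-2n/r'-1}E_3$, not $\le 2^{-2(n+1)/r'}E_3$ as you assert. Since $r>2$ gives $r'<2$ and hence $2/r'>1$, the needed factor $2^{-2/r'}$ is strictly smaller than $2^{-1}$, so the bound you actually get is strictly weaker than the induction hypothesis at level $n+1$ requires. Your opening sentence ``its size improvement per stage is $2^{-2/r'}$'' conflates the threshold appearing in item (c) with the lemma's per-application gain; they are not the same, and this is precisely where the induction step fails to close.

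The fix is routine but must be spelled out. Between outer levels $n$ and $n+1$, iterate Lemma~\ref{dec_lemma_h} a bounded number of times (since $2/r'\in(1,2)$, two passes suffice) until the residual $h$-size drops below $2^{-2(n+1)/r'}E_3$, and absorb all extracted columns/rows into $\mathfrak{C}_n^{(h)},\mathfrak{R}_n^{(h)}$; the bound in (d) survives because the second pass contributes $\lesssim 2^{r'(2n/r'+1)}=2^{2n+r'}\lesssim 2^{2n}$. Equivalently, and more transparently, run the $h$-stopping time at its own integer scale $m$ (extracting $\lesssim 2^{r'm}$ in spatial mass at each $m$ while halving the threshold), and then reindex each $m$ to the outer level $n=\lfloor r'm/2\rfloor$; at most $O(1)$ values of $m$ land in a given $n$, which preserves (d), and (c) follows from $m\ge 2n/r'$. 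Two further small points: the value $2n/r'$ is not in general an integer, so the lemma cannot literally be applied with that $n_0$ — a floor/ceiling is needed but is harmless; and the displayed nonemptiness lower bound $2^{-2(n-1)/r'}E_3$ in the statement exceeds the upper bound $2^{-2n/r'}E_3$ (since $2/r'>0$) and so is vacuous as written — it is presumably a typo for $2^{-2(n+1)/r'}E_3$, which is what your stopping time actually produces, but your proposal reproduces the displayed inequality verbatim without noticing the issue.
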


\section{Generic Estimate for the trilinear form $\Lambda_{\rr{S}}(f,g,h)$ }
\label{sec:gen-est}
Using Proposition \ref{lemma_splitting}, we obtain a way of estimating the trilinear form $\ds \Lambda_{\rr{S}}(f,g,h)$ by using the sizes and energies. We recall that $\alpha$ was defined as $\ds \alpha=\frac{1}{2}-\frac{1}{r}$.
\begin{proposition}
\label{prop:generic_est}
If $F, G, H'$ and $f, g, h=\lbrace h_\omega\rbrace_\omega$ are as in \eqref{conditions_F,G,H}, then
\begin{align}
\label{generic_est}
&\big \vert \Lambda_{\rr{S}}(f, g, h)\big \vert \lesssim \left( \sup_{s \in \rr{S}} \frac{1}{\vert I_s \vert} \int_{\rr{R}} \one_{G} \cdot \ci_{I_s}^{100} dx  \right)^{1/r} \cdot S_1^{4 \alpha \theta_1} E_1^{1-4\alpha \theta_1} S_2^{4 \alpha \theta_2} E_2^{2\alpha -4 \alpha \theta_2} S_3^{\frac{r'}{2} \cdot 4 \alpha \theta_3} E_3^{1-\frac{r'}{2} \cdot 4 \alpha \theta_3} \\
&+\left( \sup_{s \in \rr{S}} \frac{1}{\vert I_s \vert} \int_{\rr{R}} \one_{F} \cdot \ci_{I_s}^{100} dx  \right)^{1/r} \cdot S_1^{4 \alpha \beta_1} E_1^{2 \alpha-4\alpha \beta_1} S_2^{4 \alpha \beta_2} E_2^{1-4 \alpha \beta_2} S_3^{\frac{r'}{2} \cdot 4 \alpha \beta_3} E_3^{1-\frac{r'}{2} \cdot 4 \alpha \beta_3},\nonumber
\end{align}
whenever the variables $\theta_j, \beta_j$ satisfy $\theta_1+\theta_2+\theta_3=1$, $\beta_1+\beta_2+\beta_3=1$ and
\begin{equation}
\label{eq:cond_theta,beta}
0 \leq \theta_1, \beta_2 \leq \min\left( 1, \frac{1}{4 \alpha}\right), \enskip 0 \leq \theta_2, \beta_1 \leq \frac{1}{2}, 0<\theta_3, \beta_3 \leq 1.
\end{equation}
\begin{proof}
From Proposition \ref{lemma_splitting}, we have
\begin{align*}
\Lambda_{\rr{S}}(f, g, h)=\sum_{n \in \rr{Z}} \left( \sum_{\cc \in \rr{S}_n^1} \Lambda_{\cc}(f, g, h)+  \sum_{\ii{R} \in \rr{S}_n^2} \Lambda_{\ii{R}}(f, g, h)  \right), 
\end{align*}
and from Proposition \ref{prop:column est}, for any $\cc \in \rr{S}_n^1$,
\begin{equation}
\label{eq:gen_est_1}
\big \vert \Lambda_{\cc}(f, g, h)\big \vert \lesssim \left( \sup_{s \in \rr{S}} \frac{1}{\vert I_s \vert} \int_{\rr{R}} \one_{G} \cdot \ci_{I_s}^{100} dx  \right)^{1/r}   \ssize_{\rr{S}_n^1} \left( f\right) \cdot \left( \ssize_{\rr{S}_n^1} \left( g\right)  \right)^{2\alpha}
\cdot \ssize_{\rr{S}_n^1} \left( h\right) \cdot \vert I_{\cc} \vert.
\end{equation}
Here we used the fact that along a column $\cc$, the frequency intervals $\ds\lbrace \omega_{s_2} \rbrace_{s \in \cc}$ are disjoint, and orthogonality implies that
\begin{equation*}
\left( \frac{1}{\vert I_{\cc} \vert} \sum_{s \in \cc} \vert \langle g, \phi_{s_2} \rangle \vert^2 \right)^{1/r} \lesssim \left( \frac{1}{\vert I_{\cc} \vert}  \| g \cdot \ci_{I_c}^{100} \|_2^2\right)^{1/r} \lesssim \left( \sup_{s \in \rr{S}} \frac{1}{\vert I_s \vert} \int_{\rr{R}} \one_{G} \cdot \ci_{I_s}^{100} dx  \right)^{1/r}.
\end{equation*}
It will be enough to estimate
\begin{equation}
\label{eq:est_1}
\sum_{n \in \rr{Z}} \ \sum_{\cc \in \rr{S}_n^1} \ssize_{\rr{S}_n^1}\left( f \right) \cdot \left( \ssize_{\rr{S}_n^1}\left( g \right) \right)^{2 \alpha} \cdot \ssize_{\rr{S}_n^1}\left( h \right) \cdot \vert I_{\cc} \vert  \tag{I}
\end{equation}
and correspondingly,
\begin{equation}
\label{eq:est_2}
\sum_{n \in \rr{Z}} \  \sum_{\ii{R} \in \rr{S}_n^2}  \left( \ssize_{\rr{S}_n^2}\left( f \right) \right)^{2 \alpha} \ssize_{\rr{S}_n^2}\left( 2 \right)\cdot \ssize_{\rr{S}_n^2}\left( h \right) \cdot \vert I_{\ii{R}} \vert  \tag{II}.
\end{equation}
For \eqref{eq:est_1}, Proposition \ref{lemma_splitting} yields 
\begin{align*}
\eqref{eq:est_1} &\lesssim \sum_{n \in \rr{Z}} \min \left( 2^{-n} E_1, S_1 \right) \cdot \left( \min \left( 2^{-n} E_2, S_2 \right) \right)^{2 \alpha} \cdot \min \left( 2^{-\frac{2n}{r'}} E_3, S_3 \right) \cdot 2^{2n}\\
& =\sum_{n \in \rr{Z}} E_1 E_2^{2 \alpha} E_3 \min \left( 2^{-n},\frac{S_1}{E_1} \right) \cdot \left(  \min \left( 2^{-n},\frac{S_2}{E_2} \right)\right)^{2 \alpha} \cdot \min \left( 2^{-\frac{2n}{r'}},\frac{S_3}{E_3} \right)\cdot 2^{2n}.
\end{align*}
From Proposition \ref{lemma_splitting}, we know that the collections $\rr{S}_n^1$ are non-empty as long as $\ds 2^{-n} \leq \max \left( \frac{S_1}{E_1}, \left(\frac{S_3}{E_3}\right)^{\frac{r'}{2}} \right)$. Since the expression above displays no symmetries in the sizes for $f, g$, and $h$, one needs to analyze separately all the possibilities: $$\ds \frac{S_1}{E_1} \leq \frac{S_2}{E_2} \leq \left(\frac{S_3}{E_3}\right)^{\frac{r'}{2}}, \frac{S_2}{E_2} \leq \frac{S_1}{E_1} \leq \left(\frac{S_3}{E_3}\right)^{\frac{r'}{2}}, \text{  etc...}$$
We will illustrate only the first case, the others being routine repetitions.
So assume that 
\begin{equation} \frac{S_1}{E_1} \leq \frac{S_2}{E_2} \leq \left(\frac{S_3}{E_3}\right)^{\frac{r'}{2}}.
\label{case1}
\end{equation}
We split \eqref{eq:est_1} into several sub-sums according to $2^{-n}$, but each of them will still be denoted by \eqref{eq:est_1} for simplicity.
\begin{itemize}
\item[(i)] Case where $\ds 2^{-n} \leq \frac{S_1}{E_1} \leq \frac{S_2}{E_2} \leq \left(\frac{S_3}{E_3}\right)^{\frac{r'}{2}}$. Then 
\begin{align*}
\eqref{eq:est_1} \lesssim E_1E_2^{2 \alpha} E_3 \sum_{n} 2^{-n} 2^{-2\alpha n} 2^{-\frac{2n}{r'}} 2^{2n} \lesssim E_1E_2^{2 \alpha} E_3 \sum_n 2^{-n\left( 1+2 \alpha + \frac{2}{r'}-2\right)}.
\end{align*}
With the observation that $\ds 1+2 \alpha + \frac{2}{r'}-2=4 \alpha$, and under the assumption that $\ds 2^{-n} \leq \min \left( \frac{S_1}{E_1}, \frac{S_2}{E_2}, \left( \frac{S_3}{E_3} \right)^{\frac{r'}{2}}   \right)$, we have
\begin{align*}
\eqref{eq:est_1} \lesssim E_1E_2^{2 \alpha} E_3 \left( \frac{S_1}{E_1} \right)^{4 \alpha \theta_1} \cdot \left( \frac{S_2}{E_2} \right)^{4 \alpha \theta_2} \cdot \left( \frac{S_3}{E_3} \right)^{\frac{r'}{2} \cdot 4 \alpha \theta_3},
\end{align*}
where $0 \leq \theta_1, \theta_2, \theta_3 \leq 1$, and $\theta_1+\theta_2+\theta_3=1$. This further implies the desired expression from \eqref{generic_est}.
\item[(ii)] If $\ds \frac{S_1}{E_1} \leq 2^{-n}\leq \frac{S_2}{E_2} \leq \left(\frac{S_3}{E_3}\right)^{\frac{r'}{2}}$, then
\begin{align*}
\eqref{eq:est_1} \lesssim E_1E_2^{2 \alpha} E_3 \frac{S_1}{E_1}  \sum_{n} 2^{-n\left( 1-\frac{4}{r}\right)},
\end{align*}
and we have to consider two possibilities: $\ds 1-\frac{4}{r}\geq 0$ and $\ds 1 -\frac{4}{r}<0$.
\begin{itemize}
\item[(a)] If $\ds 1-\frac{4}{r} \geq 0$, then 
\begin{align*}
\eqref{eq:est_1} \lesssim  E_1E_2^{2 \alpha} E_3 \frac{S_1}{E_1} \left( \frac{S_2}{E_2} \right)^{1-\frac{4}{r}} \lesssim E_1E_2^{2 \alpha} E_3 \cdot \left( \frac{S_1}{E_1} \right)^{4 \alpha \theta_1} \cdot \left( \frac{S_1}{E_1} \right)^{1-4 \alpha \theta_1} \cdot  \left( \frac{S_2}{E_2} \right)^{1-\frac{4}{r}}.
\end{align*}
As long as $1-4 \alpha \theta_1 \geq 0$, we obtain 
\begin{align*}
\eqref{eq:est_1} \lesssim E_1E_2^{2 \alpha} E_3 \cdot \left( \frac{S_1}{E_1} \right)^{4 \alpha \theta_1} \left( \frac{S_2}{E_2} \right)^{1-\frac{4}{r}+1 -4 \alpha \theta_1}=E_1E_2^{2 \alpha} E_3 \cdot \left( \frac{S_1}{E_1} \right)^{4 \alpha \theta_1} \left( \frac{S_2}{E_2} \right)^{4 \alpha -4 \alpha \theta_1}.
\end{align*}
This implies the estimate \eqref{generic_est}, since $4 \alpha -4\alpha \theta_1=4\alpha \theta_2+4\alpha \theta_3$, with $\theta_1, \theta_2, \theta_3$ positive and adding up to $1$.
\item[(b)] On the other hand, if $\ds 1-\frac{4}{r}<0$, then $\ds \sum_{n} 2^{-n \left(1-\frac{4}{r}\right)} \lesssim \left(\frac{S_1}{E_1} \right)^{1-\frac{4}{r}}$ and 
\begin{align*}
\eqref{eq:est_1} \lesssim E_1E_2^{2 \alpha} E_3 \cdot \left( \frac{S_1}{E_1} \right)^{ 2-\frac{4}{r}}=E_1E_2^{2 \alpha} E_3 \cdot \left( \frac{S_1}{E_1} \right)^{4 \alpha}.
\end{align*}
This immediately implies \eqref{generic_est}.
\item[(c)] If $\ds 1-\frac{4}{r}=0$, a similar estimate is obtained by an easy interpolation between $(a)$ and $(b)$.
\end{itemize}
\item[(iii)] The last case we present here is $\ds \frac{S_1}{E_1} \leq \frac{S_2}{E_2} \leq  2^{-n} \leq \left(\frac{S_3}{E_3}\right)^{\frac{r'}{2}}$. In this situation, 
\begin{align*}
\eqref{eq:est_1} \lesssim E_1E_2^{2 \alpha} E_3 \frac{S_1}{E_1} \left( \frac{S_2}{E_2}\right)^{2 \alpha} \sum_{n} 2^{-n \left( \frac{2}{r'}-2 \right)}
\end{align*}
The exponent $\dfrac{2}{r'}-2$ is negative; in fact $\dfrac{2}{r'}-2=-\dfrac{2}{r}$, hence $$\ds  \sum_{n} 2^{-n \left( \frac{2}{r'}-2 \right)} \lesssim \left( \frac{S_2}{E_2}\right)^{-\frac{2}{r}}.$$
It follows that 
\begin{align*}
\eqref{eq:est_1} &\lesssim E_1E_2^{2 \alpha} E_3 \cdot \left( \frac{S_1}{E_1} \right)^{4 \alpha \theta_1} \left( \frac{S_1}{E_1} \right)^{1-4 \alpha \theta_1} \left( \frac{S_2}{E_2} \right)^{4 \alpha \theta_2} \left( \frac{S_2}{E_2} \right)^{2 \alpha -4 \alpha \theta_2-\frac{2}{r}} \\
&\lesssim E_1E_2^{2 \alpha} E_3 \cdot \left( \frac{S_1}{E_1} \right)^{4 \alpha \theta_1} \left( \frac{S_2}{E_2} \right)^{4 \alpha \theta_2} \left( \frac{S_2}{E_2} \right)^{1-4 \alpha \theta_1 +2 \alpha -4 \alpha \theta_2-\frac{2}{r}} \\
&=E_1E_2^{2 \alpha} E_3 \cdot \left( \frac{S_1}{E_1} \right)^{4 \alpha \theta_1} \left( \frac{S_2}{E_2} \right)^{4 \alpha \theta_2} \left( \frac{S_2}{E_2} \right)^{4 \alpha- 4 \alpha \theta_1-4 \alpha \theta_2}.
\end{align*}
From the last identity we get the conclusion. Here again we need the assumption $1-4 \alpha \theta_1 \geq 0$.
\end{itemize}
This concludes the proof of the estimate for \eqref{eq:est_1} in the case where \eqref{case1} holds. \\
The rest of the cases for estimating $\eqref{eq:est_1}$, as well as the estimates for \eqref{eq:est_2} reduce to similar computations, and for that reason we don't present the details here.
\end{proof}
\end{proposition}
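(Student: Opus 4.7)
The plan is to combine the column/row decomposition of Proposition \ref{lemma_splitting} with the single-column and single-row estimates of Propositions \ref{prop:column est} and \ref{prop:row est}. After writing
\[
\Lambda_{\rr{S}}(f,g,h) = \sum_{n} \Bigl(\sum_{\cc \in \rr{S}_n^1} \Lambda_\cc(f,g,h) + \sum_{\ii{R} \in \rr{S}_n^2} \Lambda_{\ii{R}}(f,g,h)\Bigr),
\]
I would apply \eqref{eq: col est} to each $\cc \in \rr{S}_n^1$. Noting that $(r-2)/r = 2\alpha$, and that for $s \in \cc$ the intervals $\omega_{s_2}$ are disjoint, the $\ell^2$-factor $\bigl(\frac{1}{|I_\cc|}\sum_{s \in \cc}|\langle g,\phi_{s_2}\rangle|^2\bigr)^{1/r}$ is controlled by orthogonality by the local $L^2$-average of $g$, which in turn is bounded by $\bigl(\sup_{s \in \rr{S}} \frac{1}{|I_s|}\int \one_G \ci_{I_s}^{100}\,dx\bigr)^{1/r}$ since $|g| \leq \one_G$. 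Hence each column contributes at most the $G$-average times $\ssize_{\rr{S}_n^1}(f)\cdot \ssize_{\rr{S}_n^1}(g)^{2\alpha}\cdot \ssize_{\rr{S}_n^1}(h)\cdot |I_\cc|$, and symmetrically each row carries the analogous factor with $G$ replaced by $F$ and with the $2\alpha$-exponent moved onto $\ssize(f)$.

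Inserting (a)--(c) and the tops-bound (d) of Proposition \ref{lemma_splitting}, the column contribution is dominated by the $G$-average multiplied by
\[
E_1 E_2^{2\alpha} E_3 \sum_{n \in \rr{Z}} \min\!\bigl(2^{-n},\tfrac{S_1}{E_1}\bigr)\cdot \min\!\bigl(2^{-n},\tfrac{S_2}{E_2}\bigr)^{\!2\alpha}\cdot \min\!\bigl(2^{-2n/r'},\tfrac{S_3}{E_3}\bigr)\cdot 2^{2n}.
\]
In the regime where all three truncations sit at $2^{-n}$ (respectively $2^{-2n/r'}$) the combined exponent of $2^{-n}$ is $1+2\alpha+2/r'-2 = 4\alpha$, which is positive since $r>2$. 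The sum is therefore geometric and its total mass is captured by the largest $n$ for which all three minima are active, giving in each ordering of the thresholds $S_1/E_1$, $S_2/E_2$, $(S_3/E_3)^{r'/2}$ a product of the form $(S_1/E_1)^{a_1}(S_2/E_2)^{a_2}(S_3/E_3)^{r'a_3/2}$ with $a_1+a_2+a_3 = 4\alpha$.

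To pass to the symmetric form \eqref{generic_est}, I would interpolate between the case-specific bounds via $\min(u,v) \leq u^\theta v^{1-\theta}$ to redistribute weight among the three ratios; writing $a_j = 4\alpha\theta_j$ yields the constraint $\theta_1+\theta_2+\theta_3 = 1$, and similarly for the row contribution with parameters $\beta_j$. The upper bounds in \eqref{eq:cond_theta,beta} are forced by requiring that the residual exponents of $E_1,E_2,E_3$ remain nonnegative: the factor $E_1^{1-4\alpha\theta_1}$ demands $\theta_1 \leq 1/(4\alpha)$ (binding only when $r$ is large enough that $4\alpha > 1$), while $E_2^{2\alpha - 4\alpha\theta_2}$ demands $\theta_2 \leq 1/2$; by symmetry the bounds on $\beta_1,\beta_2$ are analogous. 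The row sum is handled identically, producing the second term of \eqref{generic_est}.

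The main obstacle is the bookkeeping in the case analysis: there are six orderings of the three thresholds $S_1/E_1, S_2/E_2, (S_3/E_3)^{r'/2}$, and some of them further split according to the sign of auxiliary exponents such as $1 - 4/r$ (which changes sign at $r = 4$). Each sub-case, however, collapses by the same geometric-series computation once a single representative case---e.g.\ $S_1/E_1 \leq S_2/E_2 \leq (S_3/E_3)^{r'/2}$---has been treated, so the substance of the argument is entirely contained in verifying that the exponents produced by the collapsing sum can always be parameterized by $(\theta_j,\beta_j)$ inside the region \eqref{eq:cond_theta,beta}, after which the remaining orderings follow by analogous routine computations.
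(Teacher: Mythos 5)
Your proposal follows essentially the same route as the paper: decompose via Proposition~\ref{lemma_splitting}, apply the column and row estimates of Propositions~\ref{prop:column est} and~\ref{prop:row est} with the orthogonality bound on the $\ell^2$-factor, reduce to the sums over $n$ of $\min$-bounded sizes times tops, and sum a geometric series with exponent $4\alpha > 0$, parameterizing the outcome by $(\theta_j)$ and $(\beta_j)$ with the constraints forced by nonnegativity of the residual energy exponents. The only caveat is that your ``interpolate via $\min(u,v)\leq u^\theta v^{1-\theta}$'' shorthand compresses the part of the argument where the sum truncates at an intermediate threshold (the paper's cases (ii) and (iii)), where the computation is not a pure weighted geometric mean and the sign of $1-4/r$ and the restriction $1-4\alpha\theta_1\geq 0$ genuinely enter --- but you correctly flag both of these, so the sketch is faithful to the paper's argument.
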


\section{Localization of sizes and energies}
\label{sec-localization}
If we apply Proposition \ref{prop:generic_est} directly, the range that we obtain for $T_r$ is restricted by the conditions
\begin{equation}
\label{eq:restricted_Range}
r'< p \leq r, \enskip r'<q \leq r. 
\end{equation}  
To obtain a larger range, we will need to use local variants of the previous Propositions \ref{propo:en_est_f} and \ref{prop:en_est_h}.

Let $I_0$ be a fixed dyadic interval. We denote $\rr{S}(I_0)$ the subcollection of tiles with spatial interval contained inside $I_0$:
\begin{equation}
\rr{S}(I_0):=\lbrace s \in \rr{S} : I_s \subseteq I_0  \rbrace.
\end{equation} 
We have the following improvements for the \emph{energies} on $\rr{S}(I_0)$:
\begin{proposition}[modification of Prop. \ref{propo:en_est_f}]
\[
\eenergy_{\rr{S}(I_0)}\left( f \right) \lesssim \| f \cdot \ci_{I_0} \|_2, \enskip \eenergy_{\rr{S}(I_0)}\left( g \right) \lesssim \| g \cdot \ci_{I_0} \|_2.
\]
\end{proposition}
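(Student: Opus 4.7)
The plan is to adapt the proof of Proposition \ref{propo:en_est_f} by taking advantage of the additional constraint $I_\cc \subseteq I_0$ enjoyed by every maximizing column. Because $I_\cc \subseteq I_0$ implies the pointwise monotonicity $\ci_{I_\cc}(x) \leq \ci_{I_0}(x)$ (since then $\dist(x,I_\cc)/|I_\cc| \geq \dist(x,I_0)/|I_0|$), each wave packet entering the argument admits the enhanced estimate
\[
|\phi_{t_{\cc,1}}(x)| \lesssim |I_\cc|^{-1/2}\, \ci_{I_0}(x)\, \ci_{I_\cc}^{M-1}(x),
\]
so that one factor of $\ci_{I_0}$ becomes available for transfer onto $f$. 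This is the mechanism through which one improves $\|f\|_2$ to $\|f \cdot \ci_{I_0}\|_2$ on the right-hand side.

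Concretely, I would first fix maximizers $n$ and $\mathfrak{C}$ for $\eenergy_{\rr{S}(I_0)}(f)$ in the sense of Definition \ref{def:en_f} and obtain, exactly as in the original argument,
\[
\bigl(\eenergy_{\rr{S}(I_0)}(f)\bigr)^2 = 2^{2n}\sum_{\cc \in \mathfrak{C}} |I_\cc| \,\leq\, \sum_{\cc}\bigl|\langle f, \phi_{t_{\cc,1}}\rangle\bigr|^2 = \langle f, F\rangle,
\]
with $F := \sum_{\cc}\langle f, \phi_{t_{\cc,1}}\rangle\, \phi_{t_{\cc,1}}$. Rewriting $\langle f, F\rangle = \langle f\cdot \ci_{I_0},\, F\cdot \ci_{I_0}^{-1}\rangle$ and applying Cauchy--Schwarz, it will suffice to prove
\[
\|F \cdot \ci_{I_0}^{-1}\|_2 \lesssim 2^n \Bigl(\sum_{\cc}|I_\cc|\Bigr)^{1/2},
\]
after which dividing by $\eenergy_{\rr{S}(I_0)}(f)$ yields the claim. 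The bound for $g$ is then obtained by the same procedure with rows replacing columns.

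The delicate step is the last inequality: expanding its left-hand side produces modified pairings $\int \phi_{t_{\cc,1}}\, \overline{\phi_{t_{\cc',1}}}\, \ci_{I_0}^{-2}\, dx$, and the Fourier-disjointness argument in the proof of Proposition \ref{propo:en_est_f}---which annihilated cross-terms with $\omega_{\cc,1} \cap \omega_{\cc',1} = \emptyset$---is threatened by the slowly varying weight $\ci_{I_0}^{-2}$. I plan to bypass this either by replacing $\ci_{I_0}^{-1}$ with a Schwartz weight $w_{I_0}$ adapted to $I_0$ whose Fourier transform is concentrated near the origin (so the Fourier-disjointness argument survives up to a harmless enlargement of $\omega_{\cc',1}$), or by repeating the spatial-decay computation directly for the modified ``wave packets'' $\phi_{t_{\cc,1}}\cdot \ci_{I_0}^{-1}$, which retain the required pointwise estimates. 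In the latter route one must also exploit the pairwise disjointness of the top tiles $\{I_\cc \times \omega_{\cc,1}\}$ together with the size constraint satisfied by the tops to prevent pathological accumulation of columns at a common spatial location; once this is in place, the remaining dyadic summation is a routine variation of the computation already performed in Proposition \ref{propo:en_est_f}.
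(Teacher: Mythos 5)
Your overall reduction is correct and matches the natural localization of the global argument: $\eenergy_{\rr{S}(I_0)}(f)^2 = 2^{2n}\sum_\cc |I_\cc| \leq \sum_\cc |\langle f,\phi_{t_{\cc,1}}\rangle|^2 = \langle f,F\rangle = \langle f\ci_{I_0}, F\ci_{I_0}^{-1}\rangle \leq \|f\ci_{I_0}\|_2\,\|F\ci_{I_0}^{-1}\|_2$, so it suffices to show $\|F\ci_{I_0}^{-1}\|_2 \lesssim 2^n(\sum_\cc |I_\cc|)^{1/2}$. You also correctly identify the danger: the cross-terms $\int \phi_{t_{\cc,1}}\overline{\phi_{t_{\cc',1}}}\,\ci_{I_0}^{-2}\,dx$ with $\omega_{\cc,1}\cap\omega_{\cc',1}=\emptyset$ need not vanish. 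However, neither of your two proposed workarounds closes this. Workaround (ii), pure spatial decay, cannot work: when $\omega_{\cc,1}\cap\omega_{\cc',1}=\emptyset$ the spatial intervals $I_\cc$ and $I_{\cc'}$ are free to coincide, and arbitrarily many columns may share the same spatial top; the reduced sum $\sum_\cc\sum_{|I_{\cc'}|\leq|I_\cc|}|I_{\cc'}|(1+\dist(I_\cc,I_{\cc'})/|I_\cc|)^{-N}$ then has no a priori bound by $\sum_\cc|I_\cc|$. The size constraint on the tops does not rescue this without reintroducing Fourier orthogonality. Workaround (i) is not well-posed as stated: the factor that must respect Fourier supports is the \emph{reciprocal} weight $\ci_{I_0}^{-1}$, which is unbounded and cannot be replaced by a band-limited Schwartz function (and a band-limited Schwartz $w_{I_0}$ has a reciprocal that is not even tempered), so it is unclear what object the Fourier-disjointness argument would be applied to.

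The missing ingredient is simple: up to harmless constants one may take
\[
\ci_{I}(x) := \Bigl(1+\bigl(\tfrac{x-c_I}{|I|}\bigr)^2\Bigr)^{-5},
\]
so that $\ci_{I_0}^{-2}$ is a genuine polynomial. Since multiplication by a polynomial acts on the Fourier side as a differential operator, it preserves distributional Fourier support; hence if the compact Fourier supports $\frac{11}{10}\omega_{\cc,1}$ and $\frac{11}{10}\omega_{\cc',1}$ of $\phi_{t_{\cc,1}}$ and $\phi_{t_{\cc',1}}$ are disjoint, the product $\phi_{t_{\cc,1}}\overline{\phi_{t_{\cc',1}}}\,\ci_{I_0}^{-2}$ has Fourier transform vanishing in a neighborhood of the origin and its integral is exactly zero. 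The cross-terms therefore vanish in precisely the same cases as in Proposition \ref{propo:en_est_f}, and since $I_\cc\subseteq I_0$ gives $\ci_{I_0}^{-1}\leq\ci_{I_\cc}^{-1}$, each modified wave packet satisfies $|\phi_{t_{\cc,1}}\ci_{I_0}^{-1}|\lesssim|I_\cc|^{-1/2}\ci_{I_\cc}^{M-1}$, so the remaining diagonal and nested estimates go through exactly as in the global proof. With this observation inserted, your argument is complete (and the case of $g$ is identical, with rows replacing columns).
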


\begin{proposition}[modification of Prop. \ref{prop:en_est_h}]
Similarly, $$\ds \eenergy_{\rr{S}(I_0)} \left( h\right) \lesssim \| \left( \sum_{\omega \in \Omega} \vert h_\omega \vert^{r'}   \right)^{1/{r'}} \cdot \ci_{I_0}^M \|_{r'}.$$
\begin{proof}
All the tiles in $\rr{S}(I_0)$ are so that $I_s \subseteq I_0$; so in particular, if $\lbrace \mathcal{T} \rbrace_{\mathcal{T} \in \mathfrak{T}}$ is a collection of disjoint columns or rows which is a maximizer for $\eenergy_{\rr{S}(I_0)} \left( h\right)$, then $\ds \ci_{I_{\mathcal{T}}} \leq \ci_{I_0}$ for all $\mathcal{T} \in \mathfrak{T}$.
The desired estimate follows easily from the observation that
\[
\left(\eenergy_{\rr{S}(I_0)} \left( h\right)   \right)^{r'} \lesssim \sum_{\mathcal{T} \in \mathfrak{T}} \int_{\rr{R}} \sum_{\omega \in \Omega(\mathcal{T})} \big \vert \mathcal{M}\left( h_\omega \cdot \ci_{I_0}^{M}   \right)(x)  \big \vert^{r'} \cdot \one_{I_\mathcal{T}} dx.
\]
 A reasoning similar to that in Proposition \ref{prop:en_est_h} yields that
 \[
 \eenergy_{\rr{S}(I_0)}\left( h\right) \lesssim \left( \sum_{\omega \in \Omega} \| \mathcal{M}\left( h_\omega \cdot \ci_{I_0}^M\right) \|_{r'}^{r'} \right)^{1/{r'}} \lesssim \Big \|  \left( \sum_{\omega \in \Omega} \vert h_\omega   \vert^{r'}   \right)^{1/r'} \cdot \ci_{I_0}^M  \Big \|_{r'}.
 \]

\end{proof}
\end{proposition}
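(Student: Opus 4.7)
The plan is to mimic the proof of Proposition \ref{prop:en_est_h} almost verbatim, with one key modification that exploits the inclusion $I_s \subseteq I_0$ for every tile in $\rr{S}(I_0)$. I would start by fixing energy maximizers $n$ and $\mathfrak{T}$ in Definition \ref{def:en_h} applied to $\rr{S}(I_0)$, and take $\mathfrak{T}$ to be a collection of mutually disjoint columns (the row case is identical). Since every top $I_{\mathcal{T}}$ is contained in $I_0$, a direct check of the definition $\ci_I(x)=(1+\dist(x,I)/|I|)^{-10}$ yields the pointwise comparison $\ci_{I_{\mathcal{T}}}(x) \leq \ci_{I_0}(x)$, and consequently $\mathcal{M}(h_\omega \cdot \ci_{I_{\mathcal{T}}}^M) \leq \mathcal{M}(h_\omega \cdot \ci_{I_0}^M)$. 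This is the only genuinely new ingredient; everything else repeats the global estimate.

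From here the proof runs as in the non-localized case: the defining inequality \eqref{eq:size_h:constr}, together with the extremizing property of $\mathfrak{T}$, gives
\begin{equation*}
(\eenergy_{\rr{S}(I_0)}(h))^{r'} \lesssim \sum_{\mathcal{T} \in \mathfrak{T}} \int_{\rr{R}} \sum_{\omega \in \Omega(\mathcal{T})} |\mathcal{M}(h_\omega \cdot \ci_{I_{\mathcal{T}}}^M)|^{r'} \one_{I_{\mathcal{T}}}\, dx,
\end{equation*}
after which I would replace $\ci_{I_{\mathcal{T}}}^M$ by the fixed weight $\ci_{I_0}^M$ using the comparison above, and exchange the order of summation in $\mathcal{T}$ and $\omega$. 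The mutual disjointness of the columns ensures, exactly as in Proposition \ref{prop:en_est_h}, that any fixed $\omega$ belongs to $\Omega(\mathcal{T})$ for at most one column among those whose spatial tops overlap at a given point, so $\sum_{\mathcal{T} : \omega \in \Omega(\mathcal{T})} \one_{I_{\mathcal{T}}}(x) \leq 1$ pointwise.

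Finally I would apply the $L^{r'} \to L^{r'}$ boundedness of the Hardy--Littlewood maximal operator, valid because $r' > 1$ (since $r < \infty$), and conclude
\begin{equation*}
(\eenergy_{\rr{S}(I_0)}(h))^{r'} \lesssim \sum_{\omega \in \Omega} \|\mathcal{M}(h_\omega \cdot \ci_{I_0}^M)\|_{r'}^{r'} \lesssim \sum_{\omega \in \Omega} \|h_\omega \cdot \ci_{I_0}^M\|_{r'}^{r'} = \Big\| \Big( \sum_{\omega \in \Omega} |h_\omega|^{r'} \Big)^{1/{r'}} \ci_{I_0}^M \Big\|_{r'}^{r'},
\end{equation*}
from which the claim follows by taking $r'$-th roots. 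There is no substantive obstacle: the content of the proposition is merely to \emph{replace} the floating weights $\ci_{I_{\mathcal{T}}}$ attached to each column top by the single $I_0$-dependent weight $\ci_{I_0}$, and the containment $I_{\mathcal{T}} \subseteq I_0$ makes this replacement automatic.
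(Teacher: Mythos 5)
Your proof is correct and follows essentially the same route as the paper's: the single new ingredient is the pointwise comparison $\ci_{I_{\mathcal{T}}} \leq \ci_{I_0}$ coming from $I_{\mathcal{T}} \subseteq I_0$, which lets one replace the column-dependent weights by the fixed weight $\ci_{I_0}^M$; the rest (exchanging the sums, using $\sum_{\mathcal{T}:\,\omega \in \Omega(\mathcal{T})} \one_{I_{\mathcal{T}}} \leq 1$ from the disjointness of the columns, and invoking the $L^{r'}$-boundedness of the maximal operator) is exactly the argument of Proposition \ref{prop:en_est_h}, which the paper invokes by reference and you spell out. No gap.
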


\section{Proof of Theorem \ref{thm:main}}
\label{sec:main-proof}

\begin{proof}[Proof of Theorem \ref{thm:main}]
Now we are ready to provide a proof for our main result. To start with, we will partition the collection $\ds \rr{S}:=\bigcup_{d \geq 0} \rr{S}_d$, and for each of these subcollections we will show an inequality similar to \eqref{eq:main:dual_form} of Proposition \ref{prop:main}:
\begin{equation}
\label{eq:prop_main_decay}
\big \vert \Lambda_{\rr{S}_d}(f, g, h)  \big \vert \lesssim 2^{-10d} \cdot\vert F\vert^{\nu_1}\cdot \vert G\vert^{\nu_2}\cdot \vert H\vert^{\nu_3},
\end{equation}
where $\nu_1+\nu_2+\nu_3=1$, and $\ds \left(\nu_1, \nu_2, \nu_3 \right)$ is in a small neighborhood of $\ds \left(\frac{1}{p}, \frac{1}{q}, \frac{1}{s'}\right)$.

Given measurable sets $F, G, H$ with $\vert H \vert=1$, we define the exceptional set as
\begin{equation}
\label{eq:def_exceptional_Set}
\mathcal{E} :=\lbrace x: \mathcal{M}\left( \one_F\right)(x) > C \vert F \vert  \rbrace \cup \lbrace x: \mathcal{M}\left( \one_G\right)(x) > C \vert G \vert  \rbrace.
\end{equation}
For a constant $C$ large enough, we have $\vert \mathcal{E} \vert \ll 1$, so $H':=H \setminus \mathcal{E}$ is going to be a major subset of $H$. Let $f, g, \lbrace h_\omega \rbrace_{\omega \in \Omega}$ be so that
\[
\vert f(x)\vert \leq \one_{F}(x), \enskip \vert g(x)\vert \leq \one_{G}(x), \enskip \left( \sum_\omega \vert h_\omega \vert^{r'}  \right)^{1/r'} \leq \one_{H'} \enskip \text{for a. e. $x$.}
\]
Then the subcollections $\rr{S}_d$ which constitute the partition $\ds \rr{S}:=\bigcup_{d \geq 0} \rr{S}_d$, are defined by
\[
\rr{S}_d:=\lbrace s \in \rr{S}:  2^{d} \leq 1+\frac{\dist\left(I_s, \mathcal{E}^c \right)}{\vert I_s \vert} \leq 2^{d+1}  \rbrace.
\]
In order to keep things simple, we temporarily suppress the $d$-dependency in the notation $\rr{S}_d$.

Next, we will use Proposition \ref{prop:generic_est}, applied to some subcollections $\rr{S}_{n_1, n_2, n_3}(I_0) \subseteq \rr{S}(I_0)$ for suitable intervals $I_0$. The proof will become rather technical, so we will try to present the main ideas before going forward  with the details. As mentioned before, applying Proposition \ref{prop:generic_est} to $\rr{S}$  or even $\rr{S}_d$ will yield a range of boundedness for $T_r$ which is not optimal. A similar situation appears in the case of the bilinear Hilbert transform operator $BHT$ defined in \eqref{def:BHT}. Using \emph{sizes} and \emph{energies}, one can only obtain the $L^p \times L^q \to L^s$ boundedness of $BHT$ for $p, q, s$ satisfying
\[
\Big \vert \frac{1}{p}-\frac{1}{q} \Big \vert < \frac{1}{2}, \enskip \text{and} \enskip \frac{2}{3}< s < 2.
\]
One gets a larger range for $BHT$ by interpolating between the adjoint operators, and using the symmetries of the trilinear form. The procedure is described in \cite{biest}, or \cite{multilinear_harmonic}.

The trilinear form associated to $T_r$ however, lacks symmetry in $f, g$, and $h$. Instead, we will use local estimates that in turn will allow us to represent the \emph{energy} as an average over \emph{certain} intervals. The selection of the intervals is done through three stopping times, with respect to $f$, $g$, and $h$. The idea of using local estimates in order to convert the energies into averages originates from \cite{vv_BHT}.

We will obtain $\ii{I}_1^{n_1}, \ii{I}_2^{n_2}, \ii{I}_3^{n_3}$, three collections of dyadic intervals indexed after the set of natural numbers. If $I_0 \in \ii{I}_1^{n_1}$, then 
\[
2^{-n_1-1} \leq \frac{1}{\vert I_0 \vert} \int_{\rr{R}} \one_F \cdot \ci_{I_0} dx \leq 2^{-n_1}.
\]
Moreover, for every interval $I_0 \in \ii{I}_1^{n_1}$, we will have a corresponding collection $\rr{S}_{n_1}^1(I_0) \subseteq \rr{S}$, which will be constructed in the stopping time. For every $J \subseteq I_0$, and any subcollection $\rr{S}' \subseteq \rr{S}_{n_1}^1(I_0)$, we have
\begin{equation}
\label{eq:unif_eq}
\max \left( \ssize_{\rr{S}'\left(J\right)}\left( f \right), \frac{\|  \one_F \cdot \ci_J \|_1}{\vert J \vert} \right) \leq 2^{-n_1}.
\end{equation}

Similarly, $\ii{I}_2^{n_2}$ and $\ii{I}_3^{n_3}$ generate partitions of $\rr{S}$:
\[
\rr{S}:=\bigcup_{n_2} \bigcup_{I_0 \in \ii{I}_2^{n_2}} \rr{S}_{n_2}^2\left( I_0\right) = \bigcup_{n_3} \bigcup_{I_0 \in \ii{I}_2^{n_3}} \rr{S}_{n_3}^3\left( I_0\right),
\]
with the only difference that for the sizes associated with $h$, we have
\[
2^{-n_3-1}\leq \frac{1}{\vert I_0 \vert} \int_{\rr{R}} \one_{H'} \cdot \ci_{I_0}^{M r'} dx \leq 2^{-n_3}, \enskip \text{and}
\]
\[
\max \left( \sssize_{\rr{S'}\left(J\right)} \left(h\right), \frac{\| \one_{H'} \cdot \ci_{J}^{M}   \|_{r'}}{\vert J \vert^{1/{r'}}} \right) \leq 2^{-\frac{n_3}{r'}}.
\]

Now we describe the selection algorithm for $\ii{I}_3^{n_3}$ and $\rr{S}_{n_3}^3(I_0)$, the construction of $\ii{I}_1^{n_1}, \rr{S}_{n_1}^1(I_0)$ and $\ii{I}_2^{n_2}, \rr{S}_{n_2}^2(I_0)$  being similar.

\subsection*{Step: Selection algorithm for $h$}~\\
For the stopping time, we will use a new version of size of $h$. Firstly, given a collection $\rr{S}$ of tiles, we denote 
\begin{equation}
\label{eq:collection_plus}
\ii{I}^+\left( \rr{S} \right):=\lbrace I \text{ dyadic interval }: \exists s \in \rr{S} \text{ so that } I_s \subseteq I \rbrace.
\end{equation}
Then define
\[
\sssize_{\rr{S}}\left( h\right):=\sup_{I\in \ii{I}^+\left(\rr{S}\right)}\left( \frac{1}{\vert I \vert} \int_{\rr{R}} \one_{H'} \cdot \ci_{I}^{Mr'} dx\right)^{1/{r'}}.
\]
While this might appear unnatural, the reason why we are defining this new size is so that we can compare $\ds\ssize_{\rr{S}(I_0)}\left( h \right),  \sssize_{\rr{S}(I_0)}\left( h \right)$ and $\ds \frac{\| \one_{H'} \cdot \ci_{I_0}^M \|_{r'}}{\vert  I_0 \vert^{1/r'}}$. Then, using localization results, we can convert the energy into a size as well.

For $n_3 \geq  1$, assume that we have constructed the collection $\ii{I}_3^{n_3-1}$ already, and for every $I_0 \in \ii{I}_3^{n_3-1}$, also the collection of tiles $\rr{S}_{n_3-1}^3(I_0)$. Then $\bar{\rr{S}}_{n_3}$ is the collection of available tiles, which has the property that
\[
\sssize_{\bar{\rr{S}}_{n_3}}\left( h \right)^{r'} \leq 2^{- n_3}.
\]
We will construct the similar sets $\ii{I}_{3}^{n_3}$, and $\rr{S}_{n_3}^3(I_0)$. First, look for intervals $\ds I \in \ii{I}^+\left(\bar{\rr{S}}_{n_3} \right)$ with the property that
\begin{equation}
\label{eq:sel_h}
2^{-n_3-1} \leq \frac{1}{\vert I \vert} \int_{\rr{R}} \one_{H'}\cdot \ci_{I}^{M r'} dx \leq 2^{-n_3}.
\end{equation}
If there are no such intervals in $\ds \ii{I}^+\left(\bar{\rr{S}}_{n_3}\right)$, set $\ii{I}_{3}^{n_3}=\emptyset$, and $\bar{\rr{S}}_{n_3+1}:=\bar{\rr{S}}_{n_3}$; continue the procedure with $n_3$ replaced by $n_3+1$.

Otherwise, pick such an interval $\ds I_0 \in \ii{I}^+\left(\bar{\rr{S}}_{n_3}\right)$ satisfying \eqref{eq:sel_h}, which is maximal with respect to inclusion. It will contain some $s \in \bar{\rr{S}}_{n_3}$. Now define
\[
\rr{S}_{n_3}^3(I_0):=\lbrace s \in \bar{\rr{S}}_{n_3}: I_s \subseteq I_0  \rbrace,
\]
and set $\ds \bar{\rr{S}}_{n_3}:=\bar{\rr{S}}_{n_3} \setminus \rr{S}_{n_3}^3(I_0)$. We continue the search for maximal dyadic intervals  satisfying \eqref{eq:sel_h}, and which are contained in $\ds \ii{I}^+\left(\rr{S}_{n_3}^3(I_0)\right)$.

In this way, given $n_3 \geq 0$, and $I_0 \in \ii{I}_3^{n_3}$, we have, for any $t \in \rr{S}_{n_3}^3(I_0)$
\[
\frac{1}{\vert I_t \vert} \int_{\rr{R}} \one_{H'}\cdot \ci_{I_t}^{Mr'} dx \leq 2^{-n_3},
\]
for otherwise the tile $t$ would have been chosen in some $\rr{S}_3^{m_3}$ for some $m_3<n_3$. For similar reasons, if $I_0 \in \ii{I}_3^{n_3}$, $J \subseteq I_0$, and there exists at least one $t \in \rr{S}_{n_3}^3(I_0)$ with $I_t \subseteq J$, then
\[
\frac{1}{\vert J \vert}\int_{\rr{R}} \one_{H'}\cdot \ci_{J}^{Mr'} dx \leq 2^{-n_3}.
\]
This is due to the stopping time algorithm. If there are no more intervals $\ds I \in \ii{I}^+\left(\rr{S}_{n_3}^3 \right)$ satisfying \eqref{eq:sel_h}, we restart the algorithm with $n_3$ replaced by $n_3+1$. Since the collection $\rr{S}$ of tiles is finite, the procedure will end after a finite number of steps.

\subsection*{Step: Estimates for the trilinear form}~\\
We denote $\ds \ii{I}^{n_1, n_2, n_3}:=\ii{I}_1^{n_1} \cap \ii{I}_2^{n_2} \cap \ii{I}_3^{n_3}$. This will also be a collection of dyadic intervals, and if $I_0 \in \ii{I}^{n_1, n_2, n_3}$, then $I_0=I_1 \cap I_2 \cap I_3$, with $I_j \in \ii{I}_j^{n_j}$. Set $\ds \rr{S}_{n_1, n_2, n_3}(I_0):=\rr{S}_{n_1}^1(I_1)\cap \rr{S}_{n_2}^2(I_2) \cap \rr{S}_{n_3}^3(I_3)$. We will apply the estimates of Proposition \ref{prop:generic_est} to the collection $\rr{S}_{n_1, n_2, n_3}(I_0)$. Assume $I_0$ is a fixed interval, and $n_1, n_2, n_3$ are so that 
\[
2^{-n_1} \lesssim 2^d \vert F \vert, \enskip 2^{-n_2} \lesssim 2^d \vert G \vert, \enskip 2^{-n_3} \lesssim 2^{-Md} \vert H \vert.
\]
From Proposition \ref{prop:generic_est}, we can estimate the trilinear form $\ds \Lambda_{\rr{S}_{n_1, n_2, n_3}(I_0)}$ by a product of sizes and energies. We have:
\begin{align*}
\big \vert \Lambda_{\rr{S}_{n_1, n_2, n_3}(I_0)}(f, g, h)  \big \vert & \lesssim 2^{-\frac{n_2}{r}} \cdot 2^{-n_1 \cdot 4 \alpha \theta_1} \cdot \left(2^{-n_1}\right)^{\frac{1-4 \alpha \theta_1}{2}} \cdot \vert I_0 \vert^{\frac{1-4 \alpha \theta_1}{2}} \cdot 2^{-n_2 \cdot 4 \alpha \theta_2} \cdot \left( 2^{-n_2} \right)^{\frac{2 \alpha -4 \alpha \theta_2}{2}} \\ & \quad \cdot \vert I_0 \vert^{\frac{2 \alpha -4 \alpha \theta_2}{2}}\cdot \left(2^{-n_3}\right)^{2 \alpha \theta_3} \cdot \left( 2^{-n_3} \right)^{\frac{1}{r'}-2 \alpha \theta_3} \cdot \vert I_0 \vert^{\frac{1}{r'}-2 \alpha \theta_3}\\
&+2^{-\frac{n_1}{r}} 2^{-n_1 \cdot 4 \alpha \beta_1} \cdot \left( 2^{-n_1} \right)^{\frac{2 \alpha-4 \alpha \beta_1}{2}} \cdot \vert I_0 \vert^{\frac{2 \alpha-4 \alpha \beta_1}{2}} 2^{-n_2 \cdot 4 \alpha \beta_2} \cdot \left(2^{-n_2}\right)^{\frac{1-4 \alpha \beta_2}{2}}\\
& \quad \cdot \vert I_0 \vert^{\frac{1-4 \alpha \beta_2}{2}} \cdot \left(2^{-n_3}\right)^{2 \alpha \beta_3} \cdot \left( 2^{-n_3} \right)^{\frac{1}{r'}-2 \alpha \beta_3} \cdot \vert I_0 \vert^{\frac{1}{r'}-2 \alpha \beta_3}.
\end{align*}
Eventually, after setting $\theta_j=\beta_j$, the expression above can be rewritten as
\begin{equation}
\label{eq:will_decrease}
\big \vert \Lambda_{\rr{S}_{n_1, n_2, n_3}(I_0)}(f, g, h)  \big \vert \lesssim 2^{-n_1 \left(\frac{1}{2}+2 \alpha \theta_1\right)} \cdot 2^{-n_2\left(\frac{1}{2}+2 \alpha \theta_2\right)} 2^{-n_3\cdot \frac{1}{r'}} \cdot \vert I_0 \vert.
\end{equation}
Recall that $\ii{I}_1^{n_1}$ is nonempty only as long as $\ds 2^{-n_1} \lesssim \min \left( 1, 2^d \vert  F \vert \right)$. Similarly, for $\ii{I}_2^{n_2}$ and $\ii{I}_3^{n_3}$ to be nonempty, we need to have
\begin{equation}
\label{eq:restr_n_j}
2^{-n_2} \lesssim \min \left( 1, 2^d \vert  G \vert \right), \enskip \text{and  } 2^{-n_3} \lesssim \min \left( 1, 2^{-Md}\vert  H \vert \right) \enskip\text{respectively.}
\end{equation}
With this observation, \eqref{eq:will_decrease} becomes
\begin{equation*}
\big \vert \Lambda_{\rr{S}_{n_1, n_2, n_3}(I_0)}(f, g, h)  \big \vert \lesssim 2^{-n_1 \nu_1} \cdot 2^{-n_2 \nu_2} 2^{-n_3\cdot \frac{1}{r'}} \cdot \vert I_0 \vert,
\end{equation*}
as soon as $\ds 0 \leq \nu_1 \leq \frac{1}{2}+2 \alpha \theta_1, \enskip 0 \leq \nu_2 \leq \frac{1}{2}+2 \alpha \theta_2$.

Following, we sum over intervals $\ds I_0 \in \ii{I}_1^{n_1}\cap\ii{I}_2^{n_2}\cap \ii{I}_2^{n_3}$. We have the estimates
\begin{equation}
\label{eq:sum_Is}
\sum_{I_0 \in \ii{I}^{n_1, n_2, n_3}} \vert I_0 \vert \lesssim \sum_{I \in \ii{I}_j^{n_j}} \vert I \vert \lesssim  \min \left\{2^{n_1} \vert F \vert, \enskip 2^{n_2} \vert G \vert, \enskip 2^{n_3} \vert H \vert\right\},
\end{equation}
which in turn imply (by taking the geometric average)
\[
\sum_{I_0 \in \ii{I}^{n_1, n_2, n_3}} \vert I_0 \vert \lesssim \left( 2^{n_1} \vert F \vert \right)^{\gamma_1} \cdot \left( 2^{n_2} \vert G \vert \right)^{\gamma_2} \cdot \left( 2^{n_3} \vert H \vert \right)^{\gamma_3},
\]
where $0 \leq \gamma_j \leq 1$, and $\gamma_1+\gamma_2+\gamma_3=1$.
The inequalities in \eqref{eq:sum_Is} follow from the fact that, for a fixed $n_j$,  the intervals $\ds I \in \ii{I}_j^{n_j}$ are disjoint (they were chosen to be maximal), and moreover
\[
\bigcup_{I \in \ii{I}_1^{n_1}} I \subseteq \lbrace \mathcal{M}\left( 1_F \right) \geq 2^{-n_1}\rbrace, \enskip \bigcup_{I \in \ii{I}_2^{n_2}} I \subseteq \lbrace \mathcal{M}\left( 1_G \right) \geq 2^{-n_2}\rbrace, \enskip \bigcup_{I \in \ii{I}_3^{n_3}} I \subseteq \lbrace \mathcal{M}\left( 1_{H'} \right) \geq 2^{-n_3}\rbrace.
\]
In this way, we obtain
\begin{align*}
&\big \vert \Lambda_{\rr{S}_{n_1, n_2, n_3}(I_0)}(f, g, h)  \big \vert \lesssim 2^{-n_1 \left(\nu_1- \gamma_1\right)}  2^{-n_2 \left(\nu_2- \gamma_2\right)}  \cdot 2^{-n_3 \left(\frac{1}{r'}-\gamma_3\right)} \cdot \vert F \vert^{\gamma_1} \cdot \vert G \vert^{\gamma_2} \cdot \vert H'\vert^{\gamma_3}.
\end{align*}
At this point, we are left with summing these expressions in $n_1, n_2, n_3$. The conditions in \eqref{eq:restr_n_j} will yield that
\begin{equation*}
\big \vert \Lambda_{\rr{S}_d} \left( f, g, h \right) \big \vert \lesssim \left( 2^d \vert F \vert \right)^{\nu_1 -\gamma_1} \cdot \left( 2^d \vert G \vert \right)^{\nu_2 -\gamma_2} \cdot \left( 2^{-Md} \vert H \vert \right)^{\frac{1}{r'} -\gamma_3} \cdot \vert F \vert^{\gamma_1} \cdot \vert G \vert^{\gamma_2} \cdot \vert H'\vert^{\gamma_3},
\end{equation*}
provided that, for some $\gamma_1, \gamma_2, \gamma_3$, we have
 $$\nu_1-\gamma_1 >0,\enskip  \nu_2-\gamma_2 >0, \frac{1}{r'} -\gamma_3.$$
This last condition becomes equivalent to $\nu_1+\nu_2+\dfrac{1}{r'}>1$. In this case, we obtain for $M$ suitable large
\begin{equation}
\label{eq:wanted_restr_type}
\big \vert \Lambda_{\rr{S}_d} \left( f, g, h \right) \big \vert \lesssim 2^{-10 d} \vert F \vert^{\nu_1} \cdot \vert G \vert^{\nu_2},
\end{equation}
which implies that $\Lambda_{\rr{S}}$ is of generalized restricted type $\ds \left(\nu_1, \nu_2, \nu_3  \right)$ for any admissible triple $\nu_1, \nu_2, \nu_3$ which satisfies $\nu_1+\nu_2+\nu_3=1$ and 
\[
0 < \nu_1 <\frac{1}{2}+2 \alpha \theta_1 \leq \frac{1}{r'}, \enskip 0 < \nu_2 <\frac{1}{2}+2 \alpha \theta_2 \leq \frac{1}{r'}, -1 < \nu_3<\frac{1}{r'}.
\]

Interpolation theory then yields the strong type estimates: $T_r$ maps $\ds L^p \times L^q$ into $L^s$ for any $\ds r'<p, q <\infty$, $\ds \frac{r'}{2}<s <r$.

We note that the same reasoning allows us to obtain similar estimates, if $f \in L^2 \cap L^\infty$, or $g \in L^2 \cap L^\infty$. Usually the cases $f \in L^\infty$ or $g \in L^\infty$ are obtained through duality arguments, and we illustrate this in the subsequent remark. What is interesting is that, by transforming the energy into an average (which is possible because of the stopping time), we can prove generalized restricted type estimates for the \emph{bilinear form} obtained by fixing $g \in  L^2 \cap L^\infty$.  More exactly, we can show that for such a fixed function $g$, and for sets of finite measure $F$ and $H$, with $\vert H \vert=1$, one can find a major subset $H' \subseteq H$ (which will not depend on $g$) so that, for any $\vert f \vert \leq \one_F$ and any $\ds \left(\sum_{\omega \in \Omega} \vert h_\omega \vert^{r'} \right)^{1/{r'}} \leq \one_{H'}$,
\begin{equation}
\Lambda_{\rr{S}}(f, g, h) \lesssim \|g\|_{\infty} \vert F\vert^{\nu_1},
\end{equation}
whenever $\ds \frac{1}{r} < \nu_1 < \frac{1}{r'}$. Interpolation theory implies that 
\[
\|T_r(f, g)\|_p \lesssim \|f\|_p \cdot \|g\|_{\infty}, \enskip \text{for any } \enskip r'< p <r. 
\]
\end{proof}
\begin{remark}
\label{remark_2}
An alternative way of obtaining the same range for $T_r$ is by examining the adjoint operators $T_r^{\ast, 1}$ and $T_r^{\ast, 2}$. These are defined so that
\begin{equation}
\label{eq:dual-op}
\Lambda_{\rr{S}}(f, g, h):=\langle T_r(f, g), h \rangle =\langle T_r^{\ast, 1}(h, g), f \rangle =\langle T_r^{\ast, 2}(f, h), g \rangle.
\end{equation}

Using Proposition \ref{prop:generic_est}, and the usual decomposition $\ds \rr{S}=\cup_{d \geq 0} \rr{S}_d$, where 
\[
\rr{S}_d:= \lbrace s \in \rr{S} : 1+\frac{\dist (\mathcal{E}^c, I_s)}{\vert I_s \vert} \sim 2^d  \rbrace,
\]
we can prove the following:
\begin{enumerate}
\item[(i)] $\ds T_r : L^{p} \times L^q \to L^{s}\left( \ell^r \right)$, for any $p, q, s$ satisfying
\[
\frac{1}{r} < \frac{1}{p} < \frac{1}{r'}, \enskip \text{and} \enskip   \frac{1}{r} < \frac{1}{q} < \frac{1}{r'}.
\]
\item[(ii)]$\ds T_r^{\ast, 1} : L^{s'}(\ell^{r'}) \times L^q \to L^{p'}$, for any $p, q, s$ satisfying
\[
\frac{1}{r} <\frac{1}{q} < \frac{1}{r'}, \enskip \text{and} \enskip  0 <  \frac{1}{s'} < \frac{1}{r'}.
\]
\item[(iii)] $\ds T_r^{\ast, 2} : L^p \times  L^{s'}( \ell^{r'}) \to L^{q'}$, for any $p, q, s$ satisfying
\[
\frac{1}{r} <\frac{1}{p} <\frac{1}{r'}, \enskip \text{and} \enskip  0 < \frac{1}{s'} < \frac{1}{r'}.
\]
\end{enumerate}

In this way, we obtain that the trilinear form is of generalized restricted type $\left( \nu_1, \nu_2, \nu_3 \right)$ for any triple contained inside the region $\lbrace x+y+z=1, -\frac{2}{r'} \leq x, y, z \leq \frac{1}{r'}  \rbrace$.

In particular, this implies that $T_r :L^p \times L^q \to L^s$ for any $p, q, s$ satisfying
\[
\frac{1}{p}+\frac{1}{q}=\frac{1}{s}, \enskip r' < p, q \leq \infty, \enskip \text{ and} \enskip \frac{r'}{2}< s <r. 
\]
\end{remark}

\section{An application to generalized Bochner-Riesz bilinear multiplier for rough domains}
\label{sec:an-application}
Consider ${\mathcal O}$ a bounded open subset of ${\mathbb R}^{2d}$, whose boundary has Hausdorff dimension $2d-1$. We can ask the following question:
\bigskip

{\bf Question :} What is the best non-increasing function $\ds \phi: \left[0, \text{diam}(\mathcal{O})\right] \to \left[0, \infty\right)$ such that every bilinear symbol $m$ supported on $\overline{{\mathcal O}}$ and satisfying 
\begin{equation} \left|\partial_{\xi,\eta}^\alpha m(\xi,\eta)\right| \lesssim d\big((\xi,\eta),{\mathcal O}^c\big)^{-|\alpha|} \phi\big(d((\xi,\eta),{\mathcal O}^c)\big) \enskip \text{for all  } (\xi, \eta) \in \mathcal{O}, \label{condition-symbol}
\end{equation}
and for sufficiently many multi-indices $\alpha$, gives rise to a bilinear Fourier multiplier bounded from $L^p \times L^q$ to $L^s$ ? Here the triple $(p, q, s)$ satisfies the usual H\"{o}lder scaling condition.

For some specific situations, we have some definite (sometimes almost optimal) answer (the disc and more generally the ball \cite{bil-Bochner_Riesz}, the unit cubes and any polygons ...)

\begin{proposition} \label{prop:apl-BR} Consider ${\mathcal O}$ an arbitrary bounded open subset, whose boundary has Hausdorff dimension $2d-1$. Let $r>2$ and $(p, q, s)$ a triple as in Theorem \ref{thm:main}. If $\phi$ is given by
\begin{equation}
\label{eq:apl:cond}
 \phi(t) = t^{\frac{2d-1}{r'}} (1+\log(t))^{-(1/r'+\epsilon)}
\end{equation}
for some $\epsilon>0$, then any bilinear symbol $m$ satisfying \eqref{condition-symbol} gives rise to a bilinear Fourier multiplier bounded from $L^p \times L^q$ to $L^s$.
\end{proposition}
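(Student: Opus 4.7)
The plan is to decompose the symbol $m$ via a Whitney covering of $\mathcal{O}$, rewrite the bilinear operator as a linear combination of the smooth ``square''-bump operators already handled by Theorem \ref{thm:main}, and then use the dimensional assumption on $\partial\mathcal{O}$ to sum the scalar coefficients.

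First I would fix a Whitney decomposition $\mathcal{O}=\bigcup_\omega \omega$ into an essentially disjoint family of dyadic cubes with sides parallel to the axes satisfying $\ell(\omega)\sim d(\omega,\mathcal{O}^c)$, and a smooth partition of unity $\{\psi_\omega\}$ subordinate to a mild enlargement of this family. Writing $m_\omega:=m\psi_\omega$, the symbol estimate \eqref{condition-symbol} combined with $d(\cdot,\mathcal{O}^c)\sim\ell(\omega)$ on $\omega$ yields
\[
|\partial^\alpha m_\omega(\xi,\eta)|\lesssim \ell(\omega)^{-|\alpha|}\phi(\ell(\omega)).
\]
Setting $\Phi_\omega:=m_\omega/\phi(\ell(\omega))$, each $\Phi_\omega$ is then a smooth bump adapted to $\omega$ with uniform constants. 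The bilinear Fourier multiplier decomposes as
\[
T_m(f,g)=\sum_\omega \phi(\ell(\omega))\,T_{\Phi_\omega}(f,g),
\]
and a pointwise H\"older in $\omega$ with exponents $r,r'$ gives
\[
|T_m(f,g)(x)|\leq \Big(\sum_\omega |T_{\Phi_\omega}(f,g)(x)|^r\Big)^{1/r}\cdot\Big(\sum_\omega \phi(\ell(\omega))^{r'}\Big)^{1/r'}.
\]
The $L^s$-norm of the first factor is bounded by $\|f\|_p\|g\|_q$ after invoking the multidimensional generalization of Theorem \ref{thm:main} (observation (f)) applied to the arbitrary disjoint family of axis-parallel cubes $\{\omega\}$ in $\mathbb{R}^{2d}$.

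It then remains to verify that the scalar series $\sum_\omega \phi(\ell(\omega))^{r'}$ is finite. Grouping the Whitney cubes by their scale $\ell(\omega)\sim 2^{-k}$, I would bound the cardinality $N_k$ of cubes at scale $k$ by $N_k\lesssim 2^{k(2d-1)}$: all such cubes lie in a $C\,2^{-k}$-neighborhood of $\partial\mathcal{O}$, which, under the dimensional hypothesis, is coverable by $O(2^{k(2d-1)})$ cubes of side $2^{-k}$. Since $\phi(2^{-k})^{r'}\sim 2^{-k(2d-1)}k^{-(1+\epsilon r')}$, the geometric decay in $k$ cancels exactly, leaving
\[
\sum_\omega \phi(\ell(\omega))^{r'}\lesssim \sum_{k\geq k_0} k^{-(1+\epsilon r')}<\infty,
\]
which concludes the proof.

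The main obstacle is precisely the counting argument for $N_k$: a purely Hausdorff dimension hypothesis does not, in general, control the Minkowski-type cardinality of a Whitney cover of a neighborhood of $\partial\mathcal{O}$. To make the estimate $N_k\lesssim 2^{k(2d-1)}$ rigorous one has to either interpret ``Hausdorff dimension $2d-1$'' in the sense of upper Minkowski dimension, or impose some additional regularity on $\partial\mathcal{O}$ (e.g.\ finite $(2d-1)$-Minkowski content). The logarithmic gain built into $\phi$ is precisely what allows one to absorb the endpoint scale $k(2d-1)$ and obtain convergence for every $\epsilon>0$; any weaker $\phi$ would fail at this summation step.
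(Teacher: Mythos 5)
Your proposal is correct and follows essentially the same route as the paper: Whitney decomposition of $\mathcal{O}$, renormalization of $m\chi_\omega$ into adapted bumps, pointwise H\"older in $\omega$ with exponents $r,r'$ (the paper calls this ``Minkowski's inequality'' but it is indeed H\"older), invocation of the multidimensional Theorem~\ref{thm:main} on the $\ell^r$ factor, and the scale-by-scale counting $N_k\lesssim 2^{k(2d-1)}$ to sum the scalar $\ell^{r'}$ factor. Your closing caveat about Hausdorff versus upper Minkowski (box-counting) dimension is a fair observation, but it applies equally to the paper's own argument, which tacitly uses the same bound $\sharp\Omega_n\lesssim 2^{n(2d-1)}$; the paper implicitly reads ``boundary of Hausdorff dimension $2d-1$'' as carrying the finite upper Minkowski content needed for that cardinality estimate.
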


\begin{proof} Let $\Omega$ be a Whitney covering of ${\mathcal O}$. For every integer $n$ such that $2^{-n}\leq \textrm{diam}({\mathcal O})$ denote $\Omega_n$ the subcollection of square $\omega \in \Omega$ with $\ds  2^{-n} \leq d(\omega,{\mathcal O}^c) <2^{-n+1}$.

Consider $(\chi_\omega)_{\omega\in \Omega}$ a smooth partition of the unity, associated with the Whitney covering, so that (in terms of bilinear symbols)
$$ m = \sum_{n} \sum_{\omega\in \Omega_n} m \chi_{\omega}.$$
By assumption, the symbol $m \chi_{\omega}$ satisfies
$$ \left|\partial_{\xi,\eta}^\alpha m \chi_{\omega_n}\right| \lesssim 2^{n |\alpha|} 2^{-n \frac{d-1}{r'}} n^{-(1/r'+\epsilon)}.$$
So let us renormalize them and consider for $\omega\in \Omega_n$
$$ \chi_{\omega}:= 2^{n \frac{d-1}{r'}} n ^{(1+\epsilon)} m \chi_{\omega}.$$
The operator $T_m$ (which is the bilinear Fourier multiplier associated with the symbol $m$) becomes
$$ T_{m} = \sum_{n} 2^{-n \frac{d-1}{r'}} n ^{-(1+\epsilon)} \sum_{\omega\in \Omega_n} T_{\chi_{\omega}},$$
and then we conclude by Minkoswki's inequality that
$$ \left|T_{m}(f,g)\right| \leq \left( \sum_{\omega \in \Omega} \left|T_{\chi_\omega}(f,g)\right|^r \right)^{1/r} \left( \sum_{n} 2^{-n(2d-1)} n ^{-1-r'\epsilon} (\sharp \Omega_n) \right)^{1/r'}.$$
Since ${\mathcal O}$ is supposed to have a boundary of Hausdorff dimension $(2d-1)$ we deduce that $\ds (\sharp \Omega_n) \lesssim 2^{n(2d-1)}$, which implies 
$$ \left|T_{m}(f,g)\right| \leq \left( \sum_{\omega \in \Omega} \left|T_{\chi_\omega}(f,g)\right|^r \right)^{1/r}.$$
The $\ell^r$-functional fits into the case studied in Theorem \ref{thm:main}, and hence we can infer the boundedness of $T_m$.
\end{proof}

\begin{remark}
\begin{itemize}
\item[a)] An easy observation is that $\phi(t)= t^{\frac{2d-1}{r'}} (1+\log(t))^{-(1+\epsilon)}$ is sufficient.
Indeed, we can work at a fixed scale: for every $n$,
$$ \left( \sum_{\omega \in \Omega_n} \left|T_{\chi_\omega}(f,g)\right|^r \right)^{1/r}$$
is (easily) uniformly (with respect to $n$) bounded since here we work with only one scale (it's indeed simpler than \cite{bilSqF-smooth-Fr}). We can then sum these estimates since the extra term $(1+\log(t))^{-(1+\epsilon)}$ gives a $n^{-1-\epsilon}$ decay which allows us to sum with respect to $n$. \\
Hence in this situation (dealing with an arbitrary subset ${\mathcal O}$ which may be very rough), we manage to slightly  weaken the condition on $\phi(\cdot)$ (by decreasing the order of vanishing of the symbol at the boundary) in \eqref{condition-symbol}.

\item[b)] If $p,q \geq 2$ then the previous reasoning still holds with 
$$\phi(t)= t^{\frac{2d-1}{2}} (1+\log(t))^{-(1+\epsilon)}$$
which is weaker than the condition \eqref{eq:apl:cond} in Proposition \ref{prop:apl-BR}. So the improvement is only interesting outside the local-$L^2$ range, when one of $p$ or $q$ is less than $2$.
\end{itemize}
\end{remark}

\bibliographystyle{plain}   
\bibliography{harmonic.bib}

\end{document}